\def\esc#1{\langle #1\rangle}
\def\aut{\mathop{\hbox{aut}}}
\def\GSp{\mathop{\hbox{\rm GSp}}}
\def\Z{\mathbb Z}
\def\stab{\mathop{\hbox{\rm Stab}}}
\def\id{\mathop{\hbox{\rm id}}}
\def\aut{\mathop{\hbox{\rm Aut}}}
\def\ad{\mathop{\hbox{\rm ad}}}
\def\Spec{\mathop{\hbox{\rm Spec}}}
\def\T{\mathop{\mathcal T}}
\def\Z{\mathbb Z}
\def\I{\mathbf{i}}
\def\span#1{\langle #1\rangle}
\newtheorem{pr}{Proposition}
\newtheorem{lemma}{Lemma}
\newtheorem{de}{Definition}
\newtheorem{teo}{Theorem}
\newtheorem{example}{Example}
\def\Black{} 
\newfont{\hueca}{msbm10}
\def\hu #1{\hbox{\hueca #1}}\def\hu #1{\hbox{\hueca #1}}
\begin{document}

 \title{Gradings and
Symmetries  on Heisenberg type algebras}

\thanks{The authors are partially supported by the MCYT grant MTM2010-15223 and by the Junta de Andaluc\'{\i}a grants FQM-336, FQM-2467, FQM-3737.
The first and fourth authors are also supported by the PCI of the UCA `Teor\'\i a de
Lie y Teor\'\i a de Espacios de Banach' and by the PAI with
project number FQM-298.}

\author[A. J. Calder\'on]{Antonio Jes\'us Calder\'on Mart\'{\i}n}
\email{ajesus.calderon@uca.es}
\address{Dpto. Matem\'aticas\\Facultad de Ciencias, Universidad de C\'adiz\\
Campus de Puerto Real, 11510, C\'adiz, Spain}

\author[C. Draper]{Cristina Draper Fontanals}
\email{cdf@uma.es}
\address{Dpto. Matem\'atica Aplicada\\Escuela de las Ingenier\'{\i}as, Universidad de M\'alaga\\
Campus de Teatinos (Ampliaci\'{o}n), 29071, M\'alaga, Spain}

\author[C. Mart\'{\i}n]{C\'andido Mart\'{\i}n Gonz\'alez}
\email{candido@apncs.cie.uma.es}

\author[J. M. S\'anchez]{Jos\'e Mar\'{i}a S\'anchez Delgado}
\email{txema.sanchez@uma.es}
\address{Dpto. \'Algebra, Geometr\'\i a y Topolog\'\i a\\Facultad de Ciencias, Universidad de M\'alaga\\
Campus de Teatinos, 29080, M\'alaga, Spain}

\begin{abstract} We describe the fine (group) gradings on the Heisenberg algebras,
on the Heisenberg superalgebras  and on the twisted
Heisenberg
 algebras.
  We  compute the Weyl
groups of   these gradings. Also the results obtained respect to
Heisenberg superalgebras are  applied to the study of Heisenberg
Lie color algebras.

\medskip

2010 MSC: 17B70, 17B75, 17B40, 16W50.

Key words and phrases: Heisenberg algebra, graded algebra, Weyl
group.

\Black
\end{abstract}

\maketitle

\section{Introduction}  

In the last years there has been an increasing interest in the
study of the group gradings on Lie theoretic structures. In the
case of Lie algebras, this study has been focused  
on the simple
ones. A recent exhaustive survey on the matter is \cite{Book}, here we can briefly mention that  the  (complex) finite-dimensional simple case has
been studied, among other authors,    by Bahturin, Elduque,
Havl\'{\i}\v{c}ek, Kochetov, Patera, Pelantov\'{a}, Shestakov,
Zaicev and Zassenhaus
   \cite{Refe1, Ivan2, Zaicev, Alberto,  LGII, Zass} in the classical case (\cite{Alberto} encloses ${\frak d}_4$), while
the exceptional cases ${\frak g}_2, {\frak f}_4$ and ${\frak d}_4$
 have been studied by Bahturin, Draper, Elduque, Kochetov, Mart\'{\i}n,
Tvalavadze and Viruel \cite{otrog2, F4, forumd4, G2, Koche}.  The
fine group gradings on the real forms of ${\frak g}_2$ and ${\frak
f}_4$, also simple algebras, have been  classified by the three
first authors \cite{real}. Gradings have also been considered  in certain $\Z_2$-graded structures (for instance superalgebras), so the three
first authors  have studied  the case of the Jordan superalgebra $K_{10}$
(see \cite{Kac}), and the second and third authors  together with
Elduque have classified the fine gradings on exceptional Lie superalgebras in \cite{super}.
In relation with other Lie structures, the Lie triple systems of
exceptional type have also been considered from the viewpoint of
gradings (see \cite{triples}).

There is not much work done in the field of gradings on non-simple Lie algebras. Some relevant references are
 \cite{Refe2} and \cite{Refe3}.
 Our work is one step further in this direction.
We are interested in studying the gradings on
a family of non-simple Lie algebras, superalgebras, color algebras
and twisted algebras, namely, the Heisenberg algebras (resp.
superalgebras,  Lie color algebras and twisted ones).  Note that
Heisenberg (super) algebras are nilpotent and twisted Heisenberg
algebras are solvable.

The Heisenberg family of algebraic structures  was introduced by A. Kaplain
in \cite{Kaplain}.  
In the simplest case, that of a Heisenberg algebra, the notion is related to Quantum Mechanics.
As it is well known, the Heisenberg Principle of Uncertainty implies the noncompatibility of position and momentum observables
acting on fermions. This noncompatibility reduces to noncommutativity of the corresponding operators.
If we represent by $x$ the operator associated to position and by $\frac{\partial}{\partial x}$ the one associated to momentum
(acting for instance on a space $V$ of differentiable functions of a single variable), then $[\frac{\partial}{\partial x},x]=1_V$
which is nonzero. Thus we can identify the subalgebra generated by $1,x$ and $\frac{\partial}{\partial x}$
with the three-dimensional Heisenberg Lie algebra whose multiplication table in the basis $\{1,x,\frac{\partial}{\partial x}\}$
has as unique nonzero product: $[\frac{\partial}{\partial x},x]=1_V$.

The literature about Heisenberg structures is ample.
First, they have played an important role in Quantum
Mechanics,   where for instance twisted Heisenberg algebras
appear by a quantizing process from the classical Heisenberg
algebra $H(4)$. Thus, in
 \cite{Abdesselam},  coherent states for
power-law potentials are constructed by using generalized
Heisenberg algebras, being also shown that these coherent states
are useful for describing the states of real and ideal lasers
(\cite{Berrada})
or where a deformation of a Heisenberg algebra it is used to
describe the solutions of the $N$-particle rational Calogero model
and to solve the problem of proving the existence of supertraces
\cite{Konstein}.

Second, these structures are also important in  Differential Geometry.
The quotient Lie group $H_3( \mathbb{R})/H_3( \mathbb{Z})$, which is a compact
 smooth manifold without boundary
of dimension $3$ is, as mentioned by \cite{Semmes},
   one of the basic building blocks for $3$-manifolds studied in \cite{Thurston}.
   Our definition of twisted Heisenberg algebra is precisely motivated by this connection with Differential Geometry. In that context, the twisted Heisenberg algebra is the tangent algebra of a twisted Heisenberg group (certain semidirect product of a Heisenberg group with the real numbers). Its importance is due to a series of results (which can be consulted in \cite{Adams}), that describe the groups which can act by isometries in a
Lorentzian manifold. For instance:

{\bf Theorem.} {\cite[Theorem 11.7.3]{Adams}\sl \ Let $M$ be a
compact connected Lorentzian manifold   and $G$ a connected Lie
group acting isometrically and locally faithfully on $M$. Then its
Lie algebra $\mathfrak{g} = \mathfrak{k} \oplus \mathfrak{a}
\oplus \mathfrak{s}$ is a direct sum of a compact semisimple Lie
algebra $\mathfrak{k}$, an abelian algebra $\mathfrak{a}$ and a
Lie algebra $\mathfrak{s}$, which is either trivial, or isomorphic
to $\mathfrak{aff}(\mathbb{R})$, to a Heisenberg algebra $H_n$, to
a  twisted   Heisenberg algebra $H_n^\lambda $ with $\lambda \in
\mathbb{Q}_+^{(n-1)/2}$, or to $\mathfrak{sl}_2(\mathbb{R})$.}
\smallskip

Moreover, according to \cite[Chapter~8]{Adams}, the converse of
this result is also true: if $G$ is a connected simply connected
Lie group whose Lie algebra $\mathfrak{g} = \mathfrak{k} \oplus
\mathfrak{a} \oplus \mathfrak{s}$ is as above, then there is a
locally faithful isometric action of $G$ on a compact connected
Lorentz manifold. The physical meaning of this result  claims that
if you want a Lie group to act on a compact connected Lorentzian
manifold, then you must be ready to admit that the group may have
a Heisenberg section. More results about the relationship between
twisted Heisenberg algebras and Lorentzian manifolds can be found
in \cite{tesis}.

 Third, there are a variety of algebraic 
 works about these structures. For instance the set of
superderivations of a Heisenberg  superalgebra is applied to the
theory of cohomology, (\cite{Camacho}),
 the derivation
algebra of a Heisenberg Lie color algebra is computed, being a
simple complete Lie algebra  (\cite{chinos}).
Other  works  in this line are \cite{Konstantina, Zhou}.\smallskip

The study of Weyl groups of Lie gradings was inaugurated  
by
Patera and Zassenhaus in \cite{Zass}. Some concrete examples were
developed, for instance, in \cite{checos}. Recently, Elduque and
Kochetov have determined the Weyl groups of the fine gradings on
matrix, octonions, Albert and simple Lie algebras of types $A$,
$B$, $C$ and $D$ (see \cite{Weyl1, Weyl2}). The (extended) Weyl
group of a simple Lie algebra is the Weyl group of the Cartan
grading on that algebra (which is of course fine). Thus the notion
of Weyl group of a grading encompasses that of the usual Weyl
group with its countless applications. This is one of the reasons
motivating the study of Weyl groups on fine Lie gradings. On the
other hand, if we consider the category of graded Lie algebras
(not in the sense of Lie superalgebras), then the automorphism
group $G$ of such an object is defined as the group of
automorphisms of the algebra which preserve the grading, and the
Weyl group of the grading is an epimorphic image of $G$. Thus the
symmetries of a graded Lie algebra are present in the Weyl group
of the grading. Our work includes the description of the Weyl
group of the fine gradings on Heisenberg algebras, on Heisenberg
superalgebras and   on twisted Heisenberg algebras.

Over algebraically closed fields of characteristic zero, the study of gradings is strongly related to that of automorphisms. Although we get our classifications of gradings in Heisenberg algebras and on Heisenberg superalgebras in a  more general  context, we also compute the corresponding automorphism groups.
We would like to mention the work  \cite{Saal} which contains a
detailed study of the automorphisms on Heisenberg-type algebras.
So it has
been illuminating though our study of gradings goes a step
further. \smallskip

Finally, we devote some words to the distribution of results in our
work.
After a background on gradings in Section~2, we study the
group gradings on Heisenberg algebras in Section~3, by showing
that all of them are toral and by computing the Weyl group of the
only (up to equivalence)  fine one.
In Section~4 we study the fine
group gradings on Heisenberg superalgebras for $\mathbb{F}$ an algebraically closed field of characteristic different from $2$, and calculate their Weyl
groups.
In Section~5 we discuss on the concept of
Heisenberg Lie color algebra, give a description of the same and
show how the results in the previous section can be applied to
classify  a certain family of Heisenberg Lie color algebras.
Finally, in Section~6, we devote some attention to the concept of
twisted Heisenberg algebras and also compute their   group
gradings for $\mathbb{F}$   algebraically closed   of characteristic zero. We classify the  fine gradings up to equivalence and find their symmetries, which turn out to be very abundant.

\section{Preliminaries}

Throughout this work the base field   will be denoted by $\mathbb{F}$.
Let $A$ be an
algebra over $\mathbb{F}$. A {\it grading} on $A$ is a
decomposition $$\Gamma: A=\bigoplus\limits_{s \in S} A_s$$ of $A$
into   direct sum of nonzero subspaces such that for any $s_1,s_2
\in S$ there exists   $s_3 \in S$ such that $A_{s_1} A_{s_2}
\subset A_{s_3}$. The grading $\Gamma$ is said to be a {\it group
grading} if there is a group $G$ containing $S$ such that $A_{s_1}
A_{s_2} \subset A_{s_1 s_2}$ (multiplication of indices in the
group $G$) for any $s_1,s_2 \in S$. Then we can write $$\Gamma:
A=\bigoplus\limits_{g \in G} A_g,$$ by setting  $A_g=0$ if $g\in
G\setminus S$. In this paper all   the gradings we  consider will
be group gradings  where $G$ is a finitely generated abelian group
and $G$ is generated by the set of all the elements $g \in G$ such
that $A_g \ne 0$, usually called the \emph{support} of the grading
(the above $S$).

Given two gradings $A = \oplus_{g \in G}U_g$ and $A = \oplus_{h
\in H}V_h$, we shall say that they are {\em isomorphic} if there
is a group isomorphism $\sigma \colon G \to H$ and an (algebra)
automorphism $\varphi \colon A \to A$ such that $\varphi(U_g) =
V_{\sigma(g)}$ for all $g \in G$. The above two gradings are said
to be {\em equivalent} if there are    a bijection $\sigma \colon
S \to S'$ between the supports of the first and second gradings
respectively  and an algebra automorphism $\varphi$ of $A$ such
that $\varphi(U_g) = V_{\sigma(g)}$ for any $g \in S$.

Let $\Gamma$ and $\Gamma^{\prime}$  be two gradings on $A$. The
grading $\Gamma$ is said to be a {\it refinement} of
$\Gamma^{\prime}$ (or $\Gamma^{\prime}$ a {\it coarsening} of
$\Gamma$) if each homogeneous component of $\Gamma^{\prime}$ is a
(direct) sum of some homogeneous components of $\Gamma$. A grading
is called {\it fine} if it admits no proper refinements. A
fundamental concept to obtain the coarsenings of a given grading
is the one of universal grading group. Given a grading $\Gamma:{
{A}}=\oplus_{g\in G}{ {A}}_g$, one may consider the abelian group
${\tilde G}$ generated by the support of $\Gamma$  subject only to
the relations $g_1g_2=g_3$ if $0 \neq [{A}_{g_1},{A}_{g_2}]
\subset {A}_{g_3}.$ Then $A$ is graded over ${\tilde G}$; that is
$\tilde\Gamma: { {A}}=\oplus_{{\tilde g}\in {\tilde G}}{
{A}}_{\tilde{g}}$, where ${ {A}}_{\tilde{g}}$ is the sum of the
homogeneous components $A_g$ of $\Gamma$  such that the class of
$g$   in ${\tilde G}$ is ${\tilde g}.$  Note that  there is at
most one such homogeneous component and that this ${\tilde
G}$-grading $\tilde\Gamma$ is equivalent to $\Gamma$, since
$G\hookrightarrow\tilde G$, $g\mapsto\tilde g$ is an injective map
(not homomorphism). This group ${\tilde G}$ has the following
universal  property: given any coarsening $A=\oplus_{h\in H}{
{A}'}_h$ of $\tilde\Gamma$, there exists a unique group
epimorphism $\alpha\colon {\tilde G} \to H$ such that
$$A'_h=\bigoplus\limits_{{\tilde g} \in \alpha^{-1}(h)}{
{A}}_{{\tilde g}}.$$ The group ${\tilde G}$ is called the {\it
universal grading group} of $\Gamma$.  Throughout this
paper, the gradings will be considered over their universal
grading groups.
%

For a grading $\Gamma : A = \oplus_{g \in G}A_g$, the
\emph{automorphism group} of $\Gamma$, denoted $\aut(\Gamma)$,
consists of all self-equivalences of $\Gamma$, i.e., automorphisms
of $A$ that permute the components of $\Gamma$. The
\emph{stabilizer} of $\Gamma$, denoted $\stab(\Gamma)$, consists
of all automorphisms of the graded algebra $A$, i.e.,
automorphisms of $A$ that leave each component of $\Gamma$
invariant. The quotient group $\aut(\Gamma)/\stab(\Gamma)$ will be
called the \emph{Weyl group} of $\Gamma$ and denoted by
$\mathcal{W}(\Gamma)$.\smallskip


It is   well-known that any  finitely generated abelian
subgroup of diagonalizable automorphisms of $\aut(A)$ induces by
simultaneous diagonalization a grading on $A$.
When the field $\mathbb{F}$ is algebraically closed of  characteristic zero,  the converse is true:
 any grading on $A$ (finite dimensional algebra) is induced by a
finitely generated abelian subgroup of diagonalizable
automorphisms of $\aut(A)$.
The set of automorphisms inducing the grading (as simultaneous
diagonalization) is contained in the normalizer of some maximal
torus of the automorphism group (by \cite[Theorem~6 and
Theorem~3.15, p.~92]{Platonov}).
 A special kind of gradings arises
when we consider the inducing automorphisms
 not only in the normalizer of a maximal torus, but
 in a torus. Indeed,
a grading of an algebra $A$ is said to be {\em toral} if it is
produced by automorphisms within a torus of the automorphism group
of the algebra.
In the case of a fine grading, the grading is toral if and only if   
its universal grading group  is torsion-free, since  the universal grading group
is always isomorphic to the group of characters of the group of automorphisms inducing the grading.\smallskip


Next, let us recall the situation for superalgebras.
If  $L=L_0\oplus L_1$ is a  Lie
superalgebra over  $\mathbb{F}$ and $G$ a finitely generated
abelian group, a $G$-{\rm grading} on $L$ is a decomposition $
\Gamma:  L=\bigoplus_{g \in G} \big((L_0)_g \oplus (L_1)_g \big) $
where any  $(L_i)_g$ is a linear subspace of $L_i$ and where $
[(L_i)_{g_1},(L_j)_{g_2} ] \subset (L_{i+j})_{g_1g_2 } $ holds for
any $g_1,g_2  \in G$ and any $i,j\in\{0,1\}$ (sum modulo $2$).
Here the support of  $\Gamma$ is $\{g\in G\colon
{(L_i)}_g\ne0\text{ for some $i$}\}$ and
 everything  works  analogously to the case of a Lie algebra    
 with a grading. Note only a subtle difference: assuming that $L$ is  non-abelian,
   the trivial grading, $L=L_0\oplus L_1$, has as universal grading group $\Z_2$, 
while the trivial grading on  $L$ as a Lie algebra
  has the  trivial group as the universal grading group.\smallskip

Finally we give two fundamental lemmas  of purely geometrical
nature that will be applied in future sections. Recall that a
symplectic space $V$ is a linear space  provided with an
alternate nondegenerate bilinear form $\esc{\cdot,\cdot}$, and
that  in the finite-dimensional case  a standard result 
states the
existence of a \lq\lq symplectic basis\rq\rq, that is, a basis:
$\{u_1,u_1',\ldots,u_n,u_n'\}$ such that $\esc{u_i,u_i'}=1$ while
any other inner product is zero.

\begin{lemma}\label{avant}
 Let $(V,\esc{\cdot,\cdot})$ be a finite-dimensional symplectic space and assume that $V$ is the direct sum of linear subspaces $V=\oplus_{i\in I} V_i$ where for any $i\in I$ there is a unique $j\in I$
 such that $\esc{V_i,V_j}\ne 0$.
 Then there is a basis $\{u_1,u_1',\ldots,u_n,u_n'\}$
 of $V$ such that:
 \begin{itemize}
\item The basis is contained in $\cup_i V_i$.
\item For any $i,j$ we have $\esc{u_i,u_j}=\esc{u_i',u_j'}=0$.
\item For each $i$ and $j$ we have $\esc{u_i,u_j'}=\delta_{i,j}$ (Kronecker's delta).
\end{itemize}
\end{lemma}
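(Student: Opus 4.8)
The plan is to read the hypothesis as equipping the index set $I$ with an involution, and then to decompose $V$ orthogonally along the orbits of that involution, reducing everything to two elementary cases.

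First I would make the combinatorial structure explicit. Since $\esc{\cdot,\cdot}$ is alternate, $\esc{V_i,V_j}\ne0$ if and only if $\esc{V_j,V_i}\ne0$; hence if $j$ is the unique index with $\esc{V_i,V_j}\ne0$, then $i$ is the unique index with $\esc{V_j,V_i}\ne0$, so $\tau\colon I\to I$, $\tau(i)=j$, is a well-defined involution. (It is everywhere defined: if $\esc{V_i,V_k}=0$ for all $k$, then $V_i$ lies in the radical of the form, which is zero by nondegeneracy.) Next I would partition $I$ into the orbits of $\tau$ and set $W_O=\bigoplus_{i\in O}V_i$ for each orbit $O$, so that $V=\bigoplus_O W_O$. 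Because $\esc{V_i,V_k}=0$ whenever $k\notin\{i,\tau(i)\}$, distinct $W_O$'s are mutually orthogonal; as $V$ is their orthogonal direct sum and the ambient form is nondegenerate, its restriction to each $W_O$ is again nondegenerate. It therefore suffices to produce, inside $\bigcup_{i\in O}V_i$, a symplectic basis of each $W_O$; after relabelling, the union of these is a basis of $V$ satisfying the three required identities, the cross-orbit inner products vanishing by orthogonality.

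It remains to treat a single orbit. If $O=\{i\}$ is fixed by $\tau$, then $\esc{V_i,V_k}=0$ for all $k\ne i$, so a vector of $V_i$ orthogonal to all of $V_i$ is orthogonal to all of $V$ and hence zero; thus the alternate form restricts nondegenerately to $V_i=W_O$, and the standard existence of a symplectic basis applies inside $V_i$. If $O=\{i,j\}$ with $i\ne j$, then $\esc{V_i,V_i}=\esc{V_j,V_j}=0$ (the unique partner of $V_i$ is $V_j\ne V_i$, and symmetrically for $V_j$), so $V_i$ and $V_j$ are isotropic, while the pairing $V_i\times V_j\to\mathbb{F}$, $(v,w)\mapsto\esc{v,w}$, is nondegenerate, since a vector of $V_i$ killing $V_j$ kills every $V_k$ and thus lies in the radical. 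In particular $\dim V_i=\dim V_j$; writing $m$ for this common value and choosing a basis $u_1,\dots,u_m$ of $V_i$ together with the dual basis $u_1',\dots,u_m'$ of $V_j$ relative to that pairing (so $\esc{u_a,u_b'}=\delta_{a,b}$), isotropy of $V_i$ and of $V_j$ makes $\{u_1,u_1',\dots,u_m,u_m'\}$ a symplectic basis of $W_O$ contained in $V_i\cup V_j$.

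I do not expect any real obstacle here: the single genuine idea is to recognise the ``unique $j$'' condition as an involution and argue orbit by orbit. Past that, the only points needing a moment's attention are the two nondegeneracy-of-restriction remarks and the bookkeeping that guarantees the concatenation of the local symplectic bases is a global symplectic basis rather than merely their disjoint union.
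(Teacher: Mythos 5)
Your proof is correct and follows essentially the same route as the paper: the paper also splits the index set into the self-paired indices (your fixed points of $\tau$), where it takes a symplectic basis of each $V_i$, and the cross-paired indices (your two-element orbits), where it takes dual bases for the nondegenerate pairing $V_i\times V_j\to\mathbb{F}$. Your involution/orbit phrasing and the explicit nondegeneracy-of-restriction arguments are just a slightly more detailed write-up of the same argument.
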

\begin{proof}
 First we split $I$ into a disjoint union $I=I_1\cup I_2$ such
that $I_1$ is the set of all $i\in I$ such that $\esc{V_i,V_i}\ne
0$ and in $I_2$ we have all the indices $i$ such that there is  
$j\ne i$ with $\esc{V_i,V_j}\ne 0$. Now for each $i\in I_1$ the
space $V_i$ is symplectic with relation to the restriction of
$\esc{\cdot,\cdot}$ to $V_i$. So we fix in such $V_i$ a symplectic
basis. Take now $i\in I_2$ and let $j\in I$ be the unique index such
that $\esc{V_i,V_j}\ne 0$ (necessarily $j\in I_2$). Consider now the restriction
$\esc{\cdot,\cdot}\colon V_i\times V_j\to\mathbb{F}$. This map is
nondegenerate in the obvious sense (which implies
$\dim(V_i)=\dim(V_j))$. If we fix a basis $\{e_1,\ldots, e_q\}$ of
$V_i$, then by standard linear algebra arguments we get that there
is basis $\{f_1,\ldots f_q\}$ of $V_j$ such that $\esc{e_i,f_i}=1$
being the remaining inner products among basic elements zero.
Thus, putting together these basis suitable reordered we get the
symplectic basis whose existence is claimed in the Lemma.
\end{proof}

Since all the elements in the basis constructed above are in some component $V_i$ we will refer to this basis as a \lq\lq homogeneous basis\rq\rq\ of $V$.

\begin{lemma} \label{ovont} Let $(V,\esc{\cdot,\cdot})$ be a finite-dimensional linear space $V$ with a symmetric nondegenerate bilinear form $\esc{\cdot,\cdot}\colon V\times V\to\mathbb{F}$
where $\mathbb{F}$ is of characteristic other than $2$.
Assume
that $V=\oplus_{i\in I}V_i$ is the direct sum of linear subspaces
in such a way that  for each $i\in I$ there is a unique $j\in I$
such that $\esc{V_i,V_j}\ne 0$. Then  there is a basis
$B=\{u_1,v_1,\ldots, u_r,v_r,z_1,\ldots ,z_q\}$ of $V$ such that
\begin{itemize}
\item $B\subset\cup_i V_i$.
\item $\esc{z_i,z_i}\ne 0$, $\esc{u_i,v_i}=1$.
\item Any other inner product of elements in $B$ is zero.
\end{itemize}
\end{lemma}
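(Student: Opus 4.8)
The plan is to mimic the proof of Lemma~\ref{avant}, splitting the index set $I$ according to whether $\esc{V_i,V_i}\ne 0$ or there is some $j\ne i$ with $\esc{V_i,V_j}\ne 0$. Write $I=I_1\cup I_2$ accordingly (this is a disjoint union: if $i\in I_1$ then by the uniqueness hypothesis the unique $j$ with $\esc{V_i,V_j}\ne 0$ is $j=i$ itself, so $i\notin I_2$, and conversely). The space $V$ then decomposes orthogonally as $V=\big(\oplus_{i\in I_1}V_i\big)\perp\big(\oplus_{i\in I_2}V_i\big)$, because $\esc{V_i,V_j}=0$ whenever $i\in I_1$, $j\in I_2$ or viceversa (again by uniqueness), and each of the two summands is nondegenerate since the whole form is.

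For the $I_1$ part: each $V_i$ with $i\in I_1$ carries the restriction of $\esc{\cdot,\cdot}$, which is symmetric and nondegenerate (the restriction is nondegenerate because $V_i$ is orthogonal to all the other $V_k$). Since $\mathrm{char}\,\mathbb{F}\ne 2$, every finite-dimensional nondegenerate symmetric bilinear space has an orthogonal basis of anisotropic vectors, so fix in each such $V_i$ a basis $\{z^{(i)}_1,\ldots\}$ of pairwise orthogonal vectors with $\esc{z^{(i)}_k,z^{(i)}_k}\ne 0$. Collecting all of these over $i\in I_1$ gives the $z_1,\ldots,z_q$ part of $B$, and all these vectors are homogeneous (each lies in a single $V_i$) and mutually orthogonal.

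For the $I_2$ part: pair up the indices. For $i\in I_2$ let $j\ne i$ be the unique index with $\esc{V_i,V_j}\ne 0$; by symmetry and uniqueness $j\in I_2$ and $i$ is the unique index paired with $j$, so $I_2$ is partitioned into two-element subsets $\{i,j\}$. On each such pair the restriction $\esc{\cdot,\cdot}\colon V_i\times V_j\to\mathbb{F}$ is a nondegenerate pairing of vector spaces (nondegenerate because $V_i\oplus V_j$ is orthogonal to everything else and the form is nondegenerate on the whole; note $\esc{V_i,V_i}=\esc{V_j,V_j}=0$ since $i,j\in I_2$). Hence $\dim V_i=\dim V_j$, and fixing any basis $\{e_1,\ldots,e_p\}$ of $V_i$ there is a dual basis $\{f_1,\ldots,f_p\}$ of $V_j$ with $\esc{e_k,f_\ell}=\delta_{k,\ell}$. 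Each pair $(e_k,f_k)$ contributes a block $(u,v)$ to $B$ with $\esc{u,v}=1$, $\esc{u,u}=\esc{v,v}=0$, and $u,v$ homogeneous. Running over all pairs gives the $u_1,v_1,\ldots,u_r,v_r$ part. Putting the two collections together and reordering, $B$ is a basis of $V$ entirely contained in $\cup_i V_i$, with $\esc{z_k,z_k}\ne 0$, $\esc{u_k,v_k}=1$, and every other inner product of elements of $B$ zero — the three orthogonality assertions following from the orthogonal decomposition $I_1\perp I_2$, the orthogonality of distinct $V_i$ within $I_1$, and the orthogonality of distinct pairs within $I_2$.

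The only point needing slightly more than the symplectic case is the existence of an anisotropic orthogonal basis on the $I_1$ blocks, which is exactly where $\mathrm{char}\,\mathbb{F}\ne 2$ is used (one builds it inductively: pick any $v$ with $\esc{v,v}\ne 0$, which exists since the form is nonzero and, in characteristic $\ne 2$, a nonzero symmetric form cannot be totally isotropic; then pass to $v^\perp$, still nondegenerate, and iterate). I do not expect any real obstacle here — the argument is a routine adaptation of Lemma~\ref{avant} with the symmetric analogue of a symplectic basis replacing the alternating one.
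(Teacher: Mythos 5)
Your proof is correct and follows essentially the same route as the paper's: the same split $I=I_1\cup I_2$ according to whether $\esc{V_i,V_i}\ne 0$, orthogonal (anisotropic) bases on the $I_1$ blocks using $\mathrm{char}\,\mathbb{F}\ne 2$, dual bases on the paired $I_2$ blocks, and assembly into the homogeneous basis. You merely spell out a few points the paper leaves implicit (nondegeneracy of the restrictions and the existence of the anisotropic orthogonal basis), which is fine.
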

\begin{proof}
 Let $I_1$ be the subset of $I$ such that for any $i\in I_1$ we have
$\esc{V_i,V_i}\ne 0$ and let $I_2$ be the complementary $I_2:=I\setminus
I_1$.
On one hand, each $V_i$  with $i\in I_1$  has an
orthogonal basis (char$\,\mathbb{F}\ne2$).
On the other hand,  for any $i\in I_2$ consider the unique $j\ne
i$ such that $\esc{V_i,V_j}\ne 0$. The couple $(V_i,V_j)$ gives a
dual pair $\esc{\cdot,\cdot}\colon V_i\times V_j\to\mathbb{F}$
(implying $\dim(V_i)=\dim(V_j))$ and for any  basis $\{e_k\}$ of
$V_i$, there is a dual basis $\{f_h\}$ in $V_j$ such that
$\esc{e_k,f_h}=\delta_{k,h}$ (Kronecker's delta). So putting
together all these bases (suitably ordered)  the required base on
$V$ is got.
 \end{proof}

Observe that if $\mathbb{F}$ is algebraically closed
the inner products $\esc{z_i,z_i}$ in Lemma \ref{ovont} may be chosen to be $1$. The basis constructed in Lemma~\ref{ovont} will be also termed a  \lq\lq homogeneous basis\rq\rq.

{We will have the occasion to use basic terminology of finite
groups}: $\Z_n$ for the cyclic
 group of order $n$, $S_n$
for the permutation group of $n$ elements and $D_n$ for the
dihedral group of order $2n$.

\section{Gradings on   Heisenberg algebras}\label{Sec_normalesHei}


A Lie algebra $H$ is called a {\it Heisenberg algebra}  if it is
nilpotent in two steps (that is, not abelian and $[[H,H],H]=0$)
with one-dimensional center ${\mathcal Z}(H):=\{x\in H :
[x,H]=0\}$. If $z\ne0$ is a fixed element in ${\mathcal Z}(H)$ and
we take $P$ any complementary subspace of ${\mathbb F}z$, then the
map $\langle \cdot , \cdot \rangle \colon P\times P\to \mathbb{F}$
given by $\langle v,v'\rangle z=[v,v']$ is a nondegenerate
skewsymmetric bilinear form, or, in other words, $(P,\langle \cdot
, \cdot \rangle)$ is a symplectic space. Of course, any Lie
algebra constructed from a symplectic space $(P,\langle \cdot ,
\cdot \rangle)$ as $H=P\oplus \mathbb{F}z$ with $z\in
\mathcal{Z}(H)$ and $[v,v']=\langle v,v'\rangle z$ for all
$v,v'\in P$, is a Heisenberg algebra. Recall that the dimension of
$P$ is necessarily even.


In particular, there is one Heisenberg algebra up to isomorphism
for each odd dimension $n=2k+1$, which we will denote $H_n$,
characterized by the existence of a basis
\begin{equation}\label{base1}
 B =
\{e_1,\hat{e}_1,\dots,e_k,\hat{e}_k,z\}
\end{equation}
  in which the
nonzero products  are $[e_i, \hat{e}_i]  = -[\hat{e}_i, e_i]=z$
for $1 \leq i \leq k$.


A fine grading on $H_n$ is obviously provided by this basis as:
\begin{equation}\label{lagradfinaH_n}
H_n= \langle e_1\rangle  \oplus \cdots \oplus \langle e_k\rangle
\oplus
 \langle \hat{e}_1\rangle  \oplus \cdots \oplus  \langle
\hat{e}_k\rangle \oplus\langle z\rangle,
\end{equation}
where each summand is a homogeneous  component.
This grading is also a group grading. For instance, it can be
considered as a $\mathbb{Z}$-grading by letting  $H_n =
\bigoplus\limits_{i=-k}^k (H_n)_i$ for
$$(H_n)_{-i} = \langle \hat{e}_i \rangle,\ (H_n)_0 = \langle z \rangle, \ (H_n)_i =
\langle e_i \rangle,\hspace{0.2cm}i=1,\ldots, k.$$
Moreover, this grading on $H_n$ is toral. It is enough to observe
that the group of automorphisms $\T$ which are represented by
scalar matrices relative to  the basis $B$ is a torus. In fact, it
is a maximal torus of dimension $k+1$. Indeed, an element $f\in
\T$ will be determined by the nonzero scalars
$(\lambda_1,\dots,\lambda_k,\lambda) \in \mathbb{F}^{k+1}$ such
that $f(z) = \lambda z$ and $f(e_i) = \lambda_i e_i$, being then
$f(\hat{e}_i) = \frac\lambda{\lambda_i} \hat{e}_i$. If we denote
such an automorphism by $t_{(\lambda_1,\dots,\lambda_k;\lambda)}$,
it is straightforward  that
$t_{(\lambda_1,\dots,\lambda_k;\lambda)}t_{(\lambda'_1,\dots,\lambda'_k;\lambda')}=t_{(\lambda_1\lambda'_1,\dots,\lambda_k\lambda'_k;\lambda\lambda')}$
and that any automorphism commuting with every element in $\T$
preserves the common diagonalization produced by $\T$, which is
precisely the one given by (\ref{lagradfinaH_n}). All this in
particular implies that this fine toral grading can be naturally
considered as a grading over the group $\mathbb{Z}^{k+1}$, which
is its universal grading group:
\begin{equation}\label{finasobregrupouniversal}
\begin{array}{ll}
\Gamma: & (H_n)_{(0,\dots,0;2)} = \langle z \rangle,\\
& (H_n)_{(0,\dots,1,\dots,0;1)} = \langle e_i \rangle, \  \text{ ($1$ in the $i$-th slot)}\\
& (H_n)_{(0,\dots,-1,\dots,0;1)} = \langle \hat{e}_i \rangle,\\
\end{array}
\end{equation}
if $i=1,\ldots, k$.

\smallskip

Our first aim is to prove that, essentially,  this  is the unique
fine  grading.

\begin{teo}\label{topa}
For any (group) grading on $H_n$, there is a basis $B=\{z,u_1,u_1',\ldots,u_n,u_n'\}$
of homogeneous elements of $H_n$ such that
$[u_i,u_i']=z$ and the remaining possible brackets among elements of $B$ are zero.

In particular,  there is only one fine grading up to equivalence on $H_n$, the
$\mathbb{Z}^{k+1}$-grading given by
(\ref{finasobregrupouniversal}). Thus, any  grading on $H_n$ is toral.
\end{teo}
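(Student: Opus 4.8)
The plan is to reduce everything to Lemma~\ref{avant}. Given an arbitrary group grading $\Gamma: H_n=\bigoplus_{g\in G}(H_n)_g$, the first step is to locate the center. Since $\mathcal{Z}(H_n)=\mathbb{F}z$ is a characteristic ideal (it is canonically $[H_n,H_n]$, and also the center), it must be a graded subspace; being one-dimensional it lies entirely in a single homogeneous component, so we may assume $z\in(H_n)_{g_0}$ for some $g_0\in G$. Next I would produce a graded complement $P$ of $\mathbb{F}z$. Choose any homogeneous basis of $H_n$ adapted to $\Gamma$; one of its elements spans $\mathbb{F}z$, and the span $P$ of the remaining basis elements is a graded subspace with $H_n=P\oplus\mathbb{F}z$. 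Transport the symplectic form: define $\esc{\cdot,\cdot}\colon P\times P\to\mathbb{F}$ by $[v,v']=\esc{v,v'}z$, which is the nondegenerate alternating form recalled in Section~3.

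The key observation is then that the grading of $H_n$ restricts to a grading of $P$ compatible with $\esc{\cdot,\cdot}$ in exactly the sense required by Lemma~\ref{avant}. Writing $P=\bigoplus_{g} P_g$ with $P_g=(H_n)_g\cap P$, for $v\in P_g$, $v'\in P_h$ we have $[v,v']\in(H_n)_{gh}$, so $\esc{v,v'}z\in(H_n)_{gh}$; since $z\in(H_n)_{g_0}$, this forces $\esc{P_g,P_h}=0$ unless $gh=g_0$. Hence for each $g$ with $P_g\neq0$ there is at most one $h$ (namely $h=g_0g^{-1}$) with $\esc{P_g,P_h}\neq0$, and by nondegeneracy of the form there is exactly one such $h$ (possibly $h=g$). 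This is precisely the hypothesis of Lemma~\ref{avant} applied to the index set $I=\{g\in G: P_g\neq0\}$. The lemma yields a homogeneous symplectic basis $\{u_1,u_1',\ldots,u_n,u_n'\}$ of $P$ with $\esc{u_i,u_i'}=\delta_{i,j}$ and all other products zero; translating back through $[v,v']=\esc{v,v'}z$ gives $[u_i,u_i']=z$ and all other brackets among the $u_i,u_i',z$ zero. Adjoining $z$ gives the basis claimed in the theorem.

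For the \lq\lq in particular\rq\rq\ statements: any grading is a coarsening of the one determined by a homogeneous basis as just constructed, and that basis grading is visibly equivalent to the grading~(\ref{finasobregrupouniversal}) (relabel $u_i\mapsto e_i$, $u_i'\mapsto\hat e_i$), whose universal grading group was already computed to be $\mathbb{Z}^{k+1}$. Since $\mathbb{Z}^{k+1}$ is torsion-free and the grading is fine, by the criterion recalled at the end of the toral/fine discussion in Section~2 the grading is toral; more directly, it is induced by the torus $\T$ of scalar automorphisms exhibited before the theorem. To see it is the \emph{only} fine grading up to equivalence, note that it admits no proper refinement: each homogeneous component is already one-dimensional, so no further splitting is possible, and hence it is fine; and any other grading, being a coarsening of some basis grading which is in turn equivalent to this one, cannot be finer.

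The main obstacle — and really the only nontrivial point — is checking carefully that the hypothesis \lq\lq for each $i$ there is a \emph{unique} $j$ with $\esc{V_i,V_j}\neq0$\rq\rq\ of Lemma~\ref{avant} genuinely holds here: the bound \lq\lq at most one\rq\rq\ comes for free from the group law ($h=g_0g^{-1}$ is forced), but one must invoke nondegeneracy of $\esc{\cdot,\cdot}$ on $P$ to guarantee existence of that partner, i.e.\ that $P_g$ is not orthogonal to everything. Once this is in place, the rest is bookkeeping and an appeal to the already-established structure of the basis grading~(\ref{finasobregrupouniversal}).
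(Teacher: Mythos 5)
Your proof is correct and follows essentially the same route as the paper: homogeneity of the center, the observation that the group law forces $\langle P_g,P_h\rangle=0$ unless $gh=g_0$ (with nondegeneracy supplying the partner), Lemma~\ref{avant} to produce a homogeneous symplectic basis, and the torus diagonal with respect to that basis to conclude torality and uniqueness of the fine grading. The only cosmetic difference is that you work inside a graded complement of $\mathbb{F}z$ rather than in the quotient $H_n/\mathcal{Z}(H_n)$ as the paper does; just note that to obtain such a complement you should choose the homogeneous basis of $(H_n)_{g_0}$ so that it contains $z$ (this is not automatic for an arbitrary homogeneous basis), after which your argument goes through verbatim.
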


\begin{proof}
As before we will consider the bilinear alternate form
$\esc{\cdot,\cdot}\colon H_n\times H_n\to \mathbb{F}$. In any
(group) graded Lie algebra the center admits a basis of
homogeneous elements,  so, if $H_n$ is graded by a group $G$, then there
is some $g_0\in G$ such that $z\in (H_n)_{g_0}$. Thus, denoting by
$\mathcal Z(H_n)=\mathbb{F}z$ the center of $H_n$, the (abelian) quotient
Lie algebra $P:=H_n/{\mathcal Z}(H_n)$ is a symplectic space
relative to $\esc{x+{\mathcal Z}(H_n), y+{\mathcal
Z}(H_n)}:=\esc{x,y}$. Denote by $\pi\colon H_n\to P$ the canonical
projection. By defining for each $g\in G$ the subspace
$P_g:=\pi((H_n)_g)$, it is easy to check that $P=\oplus_{g\in G}
P_g$ and that for any $g\in G$ there is a unique $h$,
($h=-g+g_0$),  such that $\esc{P_g,P_h}\ne 0$. Then Lemma
\ref{avant} provides a symplectic basis of $P$ of homogeneous
elements. Let $\{u_1+{\mathcal Z}(H_n),u_1'+{\mathcal
Z}(H_n),\ldots, u_n+{\mathcal Z}(H_n),u_n'+{\mathcal Z}(H_n)\}$ be
such basis (observe that each $u_i$ and $u_j'$ may be chosen in
some homogeneous component of $H_n$ being $u_i, u_j' \notin
\mathbb{F}z$). Then $B:=\{z,u_1,u_1',\ldots,u_n,u_n'\}$ is a basis
of homogeneous elements of $H_n$ such that $[u_i,u_i']=z$ and the
remaining possible brackets among basis elements are zero.

 To
finish the proof we can consider the maximal torus of $\aut(H_n)$ formed
by the automorphisms which are diagonal relative to
  the basis $B$. Up to conjugations, this torus is
formed by all the automorphisms
$t_{(\lambda_1,\ldots,\lambda_k;\lambda)}$ constructed above.
Since $B$ are eigenvectors for any of these elements, the initial grading is
toral and in fact it is a coarsening of the fine  grading described
in Equation~(\ref{finasobregrupouniversal}).
\end{proof}

In order to work  on the group of symmetries of this grading, the
Weyl group, we compute next the  automorphism group of the
Heisenberg algebra, $\aut(H_n)$. For any $f \in \aut(H_n)$ we have
$f(z) \in {\mathcal Z}(H_n)$ and so $f(z)= \lambda_f z$ for some
$\lambda_f \in \mathbb{F}^{\times}$. If we denote by $i \colon P
\to H_n$ the inclusion map, by $\pi \colon H_n \to P$ the
projection map and define $\bar{f} := \pi \circ f \circ i$, we
easily get that $\bar{f}$ is a linear automorphism of $P$
satisfying $\langle \bar{f}(x_P), \bar{f}(y_P) \rangle = \lambda_f
\langle x_P, y_P \rangle$ for any $x_P, y_P \in P$. Indeed, given
any $x = x_P + \lambda z,$ and $y = y_P + \mu z$ in $H_n, \lambda,
\mu \in \mathbb{F}$, we have, taking into account $[z,H_n]=0$,
that

$$f([x,y]) = f([x_P,y_P]) = f(\langle x_P, y_P \rangle z) = \langle x_P,
y_P \rangle f(z)=\langle x_P, y_P \rangle  \lambda_f z$$ and
%
$$[f(x),f(y)] = [\bar{f}(x_P), \bar{f}(y_P)] = \langle
\bar{f}(x_P), \bar{f}(y_P) \rangle z.$$
Hence $\bar{f}$ belongs to $\GSp(P) := \{g\in\mathop{\rm End}(P) :
\text{ there is $\lambda_g \in \mathbb{F}^{\times}$ with } \langle
g(x), g(y) \rangle = \lambda_g \langle x, y \rangle \  \forall
x,y\in P \}$, the {\it similitude group} of $(P, \langle \cdot,
\cdot \rangle)$.

As a consequence, an arbitrary $f \in \aut(H_n)$ has as associated
matrix relative to the basis $B$ in Equation~(\ref{base1}),

$$
M_{B}(f) = \tiny{ \left(%
\begin{array}{c|c}
M_{B_P}( \bar{f}) & 0 \\
 \hline \underline{\lambda} & \lambda_f \\
\end{array}%
\right)}
$$
for $B_P=B\setminus\{z\}$ a basis of $P$,  $\bar{f} = \pi \circ f \circ i \in \GSp(P)$ and for the vector
$\underline{\lambda}= (\lambda_1,..., \lambda_{2k}) \in
\mathbb{F}^{2k}$ given by $\lambda_{2i-1}z=(f-\bar f)(e_i)$ and
$\lambda_{2i}z=(f-\bar f)(\hat{e}_i)$ if $i\le k$. Moreover, if we
define in the set $\GSp(P) \times \mathbb{F}^{2k}$ the
(semidirect) product
$$
(\bar{f}, \underline{\lambda})(\bar{g}, \underline{\eta}) :=
(\overline{fg}, \underline{\lambda}M_{B_P}(\bar g) +
\lambda_f\,\underline{\eta}),
$$
it is straightforward to verify that the mapping $$\Omega \colon
\aut(H_n) \to \GSp(P) \ltimes \mathbb{F}^{2k}$$ given by
$\Omega(f) = (\bar{f}, \underline{\lambda})$ is a group
isomorphism.

Take, for each permutation $\sigma \in S_k$, the map $\tilde\sigma
\colon H_n \to H_n$ given by $\tilde\sigma(e_i) = e_{\sigma(i)}$,
$\tilde\sigma(\hat{e}_i) = \hat{e}_{\sigma(i)}$ and
$\tilde\sigma(z) = z$. It is clear that $\tilde\sigma$ is an
automorphism permuting the homogeneous components of $\Gamma$, the
grading in (\ref{finasobregrupouniversal}), that is, $\tilde\sigma
\in \aut(\Gamma)$.

Other remarkable elements in the automorphism group of the grading
are the following ones: for each index $i\le k$, take $\mu_i
\colon H_n \to H_n$ given by $\mu_i(e_i) = \hat{e}_i,
\mu_i(\hat{e}_i) = -e_i, \mu_i(e_j) = e_j, \mu_i(\hat{e}_j) =
\hat{e}_j$ (for $j \ne i$) and $\mu_i(z) = z$.

Denote by $[f]$ the class of an automorphism $f \in \aut(\Gamma)$
in the quotient $\mathcal{W}(\Gamma) =
\aut(\Gamma)/\stab(\Gamma)$.

\begin{pr}\label{pr_weylfina}
The Weyl group $\mathcal{W}(\Gamma)$ is generated by
$\{[\tilde\sigma]  : \sigma\in S_k\}$ and $[\mu_1]$.
\end{pr}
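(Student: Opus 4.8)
The plan is to identify $\mathcal{W}(\Gamma)$ with a concrete group of permutations of the homogeneous components of $\Gamma$ and then exhibit generators. Recall that the components of $\Gamma$ (other than $\langle z\rangle$, which is fixed by every automorphism since $f(z)=\lambda_f z$) are the $2k$ lines $\langle e_i\rangle$ and $\langle\hat e_i\rangle$. An element $[f]\in\mathcal{W}(\Gamma)$ is exactly the permutation that $f\in\aut(\Gamma)$ induces on this set of $2k$ lines, and $\stab(\Gamma)$ is the kernel of this action, so $\mathcal{W}(\Gamma)$ embeds into $S_{2k}$. First I would pin down which permutations actually arise. The key constraint is that $f$ is an algebra automorphism, hence $\bar f\in\GSp(P)$ multiplies the symplectic form by a scalar $\lambda_f$; since $\langle e_i,\hat e_i\rangle=1$ and all other products among basis vectors vanish, the only pairs of basis lines with nonzero pairing are $\{\langle e_i\rangle,\langle\hat e_i\rangle\}$. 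Therefore the permutation induced by $f$ must preserve the partition of the $2k$ lines into the $k$ ``hyperbolic pairs'' $\{\langle e_i\rangle,\langle\hat e_i\rangle\}$. This says $\mathcal{W}(\Gamma)$ is contained in the wreath product $\Z_2\wr S_k=(\Z_2)^k\rtimes S_k$, the group of signed permutations of $k$ objects, acting by permuting the $k$ pairs and swapping the two lines within any pair.

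Next I would show every such signed permutation is realized, i.e. the embedding is onto $\Z_2\wr S_k$. The permutations $\tilde\sigma$ for $\sigma\in S_k$ realize the full ``$S_k$ part'': $[\tilde\sigma]$ sends $\langle e_i\rangle\mapsto\langle e_{\sigma(i)}\rangle$ and $\langle\hat e_i\rangle\mapsto\langle\hat e_{\sigma(i)}\rangle$, and one checks $\tilde\sigma\in\aut(H_n)$ directly from the multiplication table (it even lies in $\aut(\Gamma)$). The ``sign'' generators are the $\mu_i$: $\mu_i\in\aut(H_n)$ because $[\mu_i(e_i),\mu_i(\hat e_i)]=[\hat e_i,-e_i]=z$ and the other brackets are visibly preserved; and $[\mu_i]$ swaps $\langle e_i\rangle\leftrightarrow\langle\hat e_i\rangle$ while fixing every other line. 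Since $\{[\tilde\sigma]:\sigma\in S_k\}$ together with the transpositions-within-a-pair generate all of $\Z_2\wr S_k$ (the $S_k$ part conjugates $[\mu_1]$ to all the $[\mu_i]$: explicitly $[\tilde\sigma][\mu_1][\tilde\sigma]^{-1}=[\mu_{\sigma(1)}]$, which I would verify by composing the three maps on basis vectors), we conclude $\mathcal{W}(\Gamma)\cong\Z_2\wr S_k$ and that it is generated by $\{[\tilde\sigma]:\sigma\in S_k\}$ and $[\mu_1]$, which is the claim.

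The one genuinely substantive step — the rest being routine bookkeeping with the isomorphism $\Omega\colon\aut(H_n)\xrightarrow{\sim}\GSp(P)\ltimes\mathbb{F}^{2k}$ — is the upper bound: proving that no element of $\aut(\Gamma)$ can induce a permutation mixing different hyperbolic pairs or moving more lines than a signed permutation allows. I expect the main obstacle to be making this rigorous when a homogeneous component has dimension $>1$; but here each component of $\Gamma$ is one-dimensional, so an $f\in\aut(\Gamma)$ permutes a specified set of lines and the scalar-multiplication argument via $\GSp(P)$ applies cleanly. Concretely: if $f(\langle e_i\rangle)=\langle w\rangle$ for some basis line $\langle w\rangle$ with $w\in\{e_j,\hat e_j\}$, then for $f$ to respect the bracket there must be a line $\langle w'\rangle$ in the image list with $[w,w']\neq0$, forcing $\{w,w'\}$ to be a hyperbolic pair and hence $f$ to send the pair $\{\langle e_i\rangle,\langle\hat e_i\rangle\}$ onto the pair $\{\langle e_j\rangle,\langle\hat e_j\rangle\}$. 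Once this is in place the two families of generators are easily seen to exhaust $\mathcal{W}(\Gamma)$, completing the proof.
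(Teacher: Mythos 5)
Your proof is correct, and it is organized differently from the paper's. The paper argues by successive reduction inside $\aut(\Gamma)$: starting from an arbitrary $f\in\aut(\Gamma)$ it composes with suitable $\mu_i$'s and $\tilde\sigma$'s pair by pair (using the same bracket computation $[f(e_1),f(\hat e_1)]=f(z)\ne 0$ that you use to rule out mixing of hyperbolic pairs) until the resulting automorphism fixes every homogeneous component and hence lies in $\stab(\Gamma)$, and then invokes the relation $\tilde\sigma\mu_i=\mu_{\sigma(i)}\tilde\sigma$ to cut the generating set down to $\{[\tilde\sigma]:\sigma\in S_k\}$ and $[\mu_1]$. You instead sandwich the Weyl group: the faithful action of $\mathcal{W}(\Gamma)$ on the $2k$ one-dimensional components (with $\langle z\rangle$ fixed, since $z$ spans the center) together with the bracket constraint gives the upper bound $\mathcal{W}(\Gamma)\hookrightarrow \Z_2\wr S_k$, and the explicit automorphisms $\tilde\sigma$, $\mu_i$, together with $[\tilde\sigma][\mu_1][\tilde\sigma]^{-1}=[\mu_{\sigma(1)}]$, realize all of $\Z_2\wr S_k$, forcing equality; injectivity of the action (i.e.\ $\stab(\Gamma)$ being exactly the kernel) is what turns surjectivity of the generated subgroup onto the signed-permutation group into the desired generation statement, and you use it correctly. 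The underlying facts are the same in both arguments (one-dimensional components, invariance of $\langle z\rangle$, pairs mapping to pairs, the conjugation relation), so neither is more general; what your organization buys is that the isomorphism $\mathcal{W}(\Gamma)\cong\Z_2^k\rtimes S_k$, which the paper extracts only after the proposition by listing the normal forms $[\mu_{i_1}\cdots\mu_{i_s}\tilde\sigma]$, comes out in one stroke, whereas the paper's stepwise reduction is more hands-on and never needs to name the ambient hyperoctahedral group.
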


\begin{proof}
Let $f$ be an arbitrary element in $\aut(\Gamma)$. The elements in
$\aut(\Gamma)$ permute the homogeneous components of the grading
$\Gamma$, but $\mathbb{F}z$ remains always invariant. Thus
$f(e_1)$ belongs to some homogeneous component  different from
$\mathbb{F}z$, and there is an index $i\le k$ such that either
$f(e_1)\in \mathbb{F}^{\times}e_i$ or $f(e_1) \in
\mathbb{F}^{\times}\hat{e}_i$. We can assume that $f(e_1) \in
\mathbb{F}^{\times}e_i$ by replacing, if necessary, $f$ with
$\mu_if$. Now, take the permutation $\sigma = (1,i)$ which
interchanges $1$ and $i$, so that $f' = \tilde\sigma f$ maps $e_1$
into $\alpha e_1$ for some $\alpha\in\mathbb{F}^{\times}$. Note
that
$\alpha[e_1,f'(\hat{e}_1)]=[f'(e_1),f'(\hat{e}_1)]=f'([e_1,\hat{e}_1])
= f'(z)$ is a nonzero multiple of $z$, hence $f'(\hat{e}_1) \notin
\{x \in H_n  : [x,e_1] = 0\} = \langle z,e_1,e_i,\hat{e}_i :
2\le i\le k\rangle$. But $f' \in \aut(\Gamma)$, so $f'(\hat{e}_1)
\in \langle\hat{e}_1\rangle$. In a similar manner the automorphism
$f'$ sends $e_2$ to  some $\langle e_j\rangle$ or some $\langle
\hat{e}_j\rangle$ for $j \ne 1$, hence we can replace $f'$ by $f''
\in \{\widetilde{(2,j)}f',\widetilde{(2,j)}\mu_jf'\}$ such that
$f''$ preserves the homogeneous components $\langle e_1\rangle,
\langle \hat{e}_1\rangle$, $\langle e_2\rangle$, $\langle
\hat{e}_2\rangle$ and $\langle z\rangle$. By arguing as above, we
can multiply $f$ by  an element in the subgroup generated by $
\mu_j$ and $\tilde\sigma$, for $1\le j\le k$ and $\sigma \in S_k$,
such that the product stabilizes all the components, so that it
belongs to $\stab(\Gamma)$. The proof finishes if we observe that
$\tilde\sigma\mu_i = \mu_{\sigma(i)}\tilde\sigma$ for all $i\le
k$, so that all $\mu_i$'s belong to the noncommutative group
generated by $\{[\tilde\sigma]  : \sigma\in S_k\}$ and $[\mu_1]$.
\end{proof}

Hence $\mathcal{W}(\Gamma) = \{[\mu_{i_1} \dots
\mu_{i_s}\tilde\sigma]  : \sigma\in S_k,\,1\le i_1\le\dots\le
i_s\le k\}$ has $2^kk!$ elements. Observe that, although any
$\mu_i$ has order 4, its class $[\mu_i]$ has order 2. Besides
$\mu_i$ and $\mu_j$ commute, so we can identify
$\mathcal{W}(\Gamma)$ with the group $\mathcal{P}(K)\rtimes S_k$,
where the product is given by $(A,\sigma)(B,\eta) =
(A\bigtriangleup\sigma(B),\sigma\eta)$ if $A, B \subset
K=\{1,\dots,k\}$, $\sigma,\eta\in S_k$, and where the elements in
$\mathcal{P}(K)$ are the subsets of $\{1,\dots,k\}$ and
$\bigtriangleup$ denotes the symmetric difference. Thus
$$
\mathcal{W}(\Gamma)\cong\mathbb{Z}_2^k\rtimes S_k.
$$

\section{Gradings on  Heisenberg superalgebras}\label{Sec_superHei}

In this section we will assume the ground field $\mathbb{F}$ to be
algebraically closed and of
characteristic other than $2$.
A {\rm Heisenberg superalgebra} $\mathcal{H}=\mathcal{H}_0\oplus
\mathcal{H}_1$ is a  nilpotent in two steps Lie superalgebra with
one-dimensional center such that $[\mathcal{H}_0,\mathcal{H}_1] =
0$. In particular this implies that the even part is a Heisenberg
algebra, so that it is determined up to isomorphism by its
dimension. Note that, if $x,y \in \mathcal{H}_1$, then $[x,y] =
[y,x] \in \mathbb{F}z = \mathcal{Z}(\mathcal{H})$, so there is a
nondegenerate symmetric bilinear form $\langle \cdot , \cdot
\rangle \colon \mathcal{H}_1 \times \mathcal{H}_1 \to \mathbb{F}$
 such that $[x,y] = \langle x,y\rangle z$ for all $x,y \in
\mathcal{H}_1$. Hence there is a basis of $\mathcal{H}_1$ in which
the matrix of $\langle \cdot , \cdot \rangle$ is the identity
matrix. If $B_1 = \{w_1,...,w_m\}$  denotes such a basis, and $B_0
= \{z,e_1,\hat{e}_1,...,e_k,\hat{e}_k\}$ denotes the basis of
$\mathcal{H}_0$ as in Section~\ref{Sec_normalesHei}, then the
product is given by
$$\begin{array}{l}
[{e}_i,\hat e_i] =-[\hat {e}_i,e_i]= z, \hspace{0.3cm} 1 \leq i \leq k,\\
{[w_j, w_j]} =  z, \hspace{0.3cm} 1 \leq j \leq m,
\end{array}$$
with all other  products being zero. As this algebra is completely
determined by $n=2k+1$ and $m$, the dimensions of the even and odd
part respectively, we denote it by $H_{n,m}$.

If we denote by
$$\circ \colon (H_{n,m})_1 \times (H_{n,m})_1 \to (H_{n,m})_0$$ the
bilinear mapping $x_1 \circ y_1 := [x_1,y_1]$, we get that the
product in $H_{n,m}$ can be expressed  by
$$[(x_0,x_1), (y_0,y_1)] = [x_0,y_0] + x_1 \circ y_1,$$
for all $x_i,y_i \in (H_{n,m})_i$.


\medskip

%


Assume now that $H_{n,m}$ is a graded superalgebra and $G$ is the
grading group. Then the even part $(H_{n,m})_0$ admits a basis
$\{z,e_1,\hat e_1, \ldots e_k, \hat{e}_k\}$ of homogeneous
elements as has been proved in the previous section.  On the other
hand the product in $(H_{n,m})_1$ is of the form $x\circ
y=\esc{x,y}z$ for any $x,y\in (H_{n,m})_1$ and where
$\esc{\cdot,\cdot}\colon (H_{n,m})_1\times
(H_{n,m})_1\to\mathbb{F}$ is a symmetric  nondegenerate bilinear
form. Furthermore, for any $g\in G$, denote by $\mathcal{L}_g$ the
subspace $\mathcal{L}_g:=(H_{n,m})_1\cap (H_{n,m})_g$ (that is, the
odd part of the homogeneous component of degree $g$ of $H_{n,m}$).
Then $(H_{n,m})_1=\oplus_{g\in G}\mathcal{L}_g$ is a decomposition
on linear  subspaces and for any $g\in G$ there is a unique $h\in
G$ such that $\esc{\mathcal{L}_g,\mathcal{L}_h}\ne 0$: indeed,
assume $\esc{\mathcal{L}_g,\mathcal{L}_h}\ne 0$. Then $0\ne
\mathcal{L}_g\circ \mathcal{L}_h\subset\mathbb{F}z$ and this implies
that $g+h=g_0$ where $g_0$ is the degree of $z$. Thus $h=g_0-g$ is
unique. Next we apply Lemma \ref{ovont} to get a basis
 $\{z,e_1,\hat{e}_1,\ldots
e_k,\hat{e}_k,u_1,v_1,\ldots, u_r,v_r,z_1,\ldots,z_q\}$ of
$H_{n,m}$ (of homogeneous elements) such that $z, e_i, \hat e_i$ generate the even part of $H_{n,m}$ while $u_1,v_1,\ldots, u_r,v_r,z_1,\ldots,z_q$ generate the odd part, and the nonzero products are:
\begin{equation}\label{relacionesparalasfinas_super}
[e_i,\hat e_i]=[u_j,v_j]=[z_l,z_l]=z,
\end{equation}
for $i\in\{1,\ldots,k\}$, $j\in\{1,\ldots,r\}$ and $l\in\{1,\ldots q\}$.

 This basis provides a
$\mathbb{Z}^{1+k+r} \times \mathbb{Z}_2^{m-2r}$-grading on
$H_{n,m}$ given by
\begin{equation}\label{gradsuper}
\begin{array}{ll}
\Gamma^r: & (H_{n,m})_{(2;0,\dots,0;0,\dots,0;\bar 0,\dots,\bar0)}=\langle z\rangle, \\
& (H_{n,m})_{(1;0,\dots,1,\dots,0;0,\dots,0;\bar 0,\dots,\bar0)}=\langle e_i\rangle, \\
& (H_{n,m})_{(1;0,\dots,-1,\dots,0;0,\dots,0;\bar 0,\dots,\bar0)}=\langle \hat{e}_i\rangle, \\
& (H_{n,m})_{(1;0,\dots,0;0,\dots,1,\dots,0;\bar 0,\dots,\bar0)}=\langle u_j\rangle, \\
& (H_{n,m})_{(1;0,\dots,0;0,\dots,-1,\dots,0;\bar 0,\dots,\bar0)}=\langle v_j\rangle, \\
& (H_{n,m})_{(1;0,\dots,0;0,\dots,0;\bar
0,\dots,\bar1,\dots,\bar0)}=\langle z_l\rangle,
\end{array}
\end{equation}
if $i\le k,j\le r,l\le q$. This grading is a refinement of the
original $G$-grading of the algebra.

Observe that for each $r$ such that $0\le 2r\le m$, there exists a
basis of $(H_{n,m})_1$ satisfying the
relations~(\ref{relacionesparalasfinas_super}) by taking for $j\le
r,l\le m-2r$,
$$
\begin{array}{l}
u_j := \frac1{\sqrt2}(w_{2j-1}+\I w_{2j}),\\
v_{j} := \frac1{\sqrt2}(w_{2j-1}-\I w_{2j}),\\
z_l := w_{l+2r},
\end{array}
$$
if $\I\in\mathbb{ F}$ is a primitive  square root of the unit. If
the starting $G$-grading is fine, then it is equivalent  to the
$\mathbb{Z}^{1+k+r} \times \mathbb{Z}_2^{m-2r}$-grading $\Gamma^r$
provided by the above basis. Therefore we have  proved the
following result.

\begin{teo}\label{superteo}
Up to equivalence, there are $\frac{m}{2}+1$ fine gradings on
$H_{n,m}$ if $m$ is even and $\frac{m+1}{2}$ in case $m$ is odd,
namely, $\{\Gamma^r : 2r\le m\}$. All of these are inequivalent,
and only one is toral, $ \Gamma^{m/2}$ when $m$ is even.
\end{teo}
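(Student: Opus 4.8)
The plan is to show three things: that the gradings $\Gamma^r$ exhaust all fine gradings up to equivalence, that they are pairwise inequivalent, and that exactly one of them is toral. For the first point, the work has essentially been done in the discussion preceding the statement: any $G$-grading on $H_{n,m}$ produces, via Lemma~\ref{ovont} applied to the odd part together with the structure of the even part established in Theorem~\ref{topa}, a homogeneous basis satisfying the relations~(\ref{relacionesparalasfinas_super}) for some $r$ with $0\le 2r\le m$, and this refines the given grading to $\Gamma^r$. Hence every fine grading is equivalent to some $\Gamma^r$; conversely each $\Gamma^r$ is realized on $H_{n,m}$ by the explicit change of basis $u_j=\frac1{\sqrt2}(w_{2j-1}+\I w_{2j})$, etc. (here using that $\mathbb{F}$ is algebraically closed), so the list $\{\Gamma^r : 2r\le m\}$ is exactly the list of candidate fine gradings. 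To finish the first point I would check that each $\Gamma^r$ is actually fine, i.e. admits no proper refinement; this follows because each homogeneous component is one-dimensional, so any refinement can only split the grading group, and the stated group $\mathbb{Z}^{1+k+r}\times\mathbb{Z}_2^{m-2r}$ is already the universal grading group of $\Gamma^r$ (the relations forced by~(\ref{relacionesparalasfinas_super}) are precisely those defining this group). Counting: the number of admissible $r$ is $\lfloor m/2\rfloor+1$, which is $\frac m2+1$ for $m$ even and $\frac{m+1}2$ for $m$ odd.

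For pairwise inequivalence, the key observation is that equivalent fine gradings have isomorphic universal grading groups. The universal grading group of $\Gamma^r$ is $\mathbb{Z}^{1+k+r}\times\mathbb{Z}_2^{m-2r}$; its free rank $1+k+r$ and its torsion part $\mathbb{Z}_2^{m-2r}$ both depend injectively on $r$ (as $k,m$ are fixed), so distinct values of $r$ give non-isomorphic groups and hence inequivalent gradings. Alternatively, and more intrinsically, one can distinguish them by a grading invariant: the number of homogeneous odd components $X$ with $[X,X]\ne 0$ equals $q=m-2r$, which is preserved under any self-equivalence, so $r$ is determined by the equivalence class.

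The main point requiring genuine argument is the toral claim. By the criterion recalled in Section~2, a fine grading is toral if and only if its universal grading group is torsion-free. The universal grading group of $\Gamma^r$ is $\mathbb{Z}^{1+k+r}\times\mathbb{Z}_2^{m-2r}$, which is torsion-free exactly when $m-2r=0$, i.e. when $m$ is even and $r=m/2$. So $\Gamma^{m/2}$ is the unique toral one when $m$ is even, and no $\Gamma^r$ is toral when $m$ is odd. The only subtlety here is justifying that the group written in~(\ref{gradsuper}) really is the universal grading group and not merely a group over which $\Gamma^r$ is realized: one must verify that the defining relations of the universal group — namely $g_1+g_2=g_3$ whenever $0\ne[(H_{n,m})_{g_1},(H_{n,m})_{g_2}]\subset(H_{n,m})_{g_3}$ — are satisfied by the degrees assigned in~(\ref{gradsuper}) and impose no further collapse. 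Reading off~(\ref{relacionesparalasfinas_super}): the products $[e_i,\hat e_i]$, $[u_j,v_j]$, $[z_l,z_l]$ all equal $z$, which forces $\deg e_i+\deg\hat e_i=\deg u_j+\deg v_j=2\deg z_l=\deg z$ for all $i,j,l$, and these are precisely the relations built into $\mathbb{Z}^{1+k+r}\times\mathbb{Z}_2^{m-2r}$ (note $\deg z_l$ has order $2$ relative to $\deg z$, which is why the $\mathbb{Z}_2$ factors appear and cannot be removed); there are no other nonzero products, so no further relations. This confirms the group in~(\ref{gradsuper}) is universal and completes the toral count.
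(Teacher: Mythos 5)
Your first two steps track the paper's own argument and are fine: the paper proves the theorem exactly by the discussion you cite (Theorem~\ref{topa} for the even part, Lemma~\ref{ovont} for the odd part, the homogeneous basis satisfying (\ref{relacionesparalasfinas_super}), the explicit $\frac1{\sqrt2}(w_{2j-1}\pm\I w_{2j})$ change of basis to realize each $\Gamma^r$), and it disposes of inequivalence with the same one-line remark you make (non-isomorphic universal groups); your alternative invariant, the number $q$ of odd homogeneous components $X$ with $[X,X]\neq0$, is a nice intrinsic substitute. Fineness of $\Gamma^r$ is indeed immediate from one-dimensionality of the components.

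The genuine gap is in the step you yourself flag as the one "requiring genuine argument": the identification of the universal group. The abelian group presented by the generators $\deg z,\deg e_i,\deg\hat e_i,\deg u_j,\deg v_j,\deg z_l$ subject only to $\deg e_i+\deg\hat e_i=\deg u_j+\deg v_j=2\deg z_l=\deg z$ is \emph{not} $\mathbb{Z}^{1+k+r}\times\mathbb{Z}_2^{m-2r}$: eliminating $\deg\hat e_i=\deg z-\deg e_i$, $\deg v_j=\deg z-\deg u_j$ and $\deg z=2\deg z_1$, one is left with free generators $\deg e_i,\deg u_j,\deg z_1$ and the $2$-torsion elements $\deg z_l-\deg z_1$ ($l\ge2$), i.e.\ the group is $\mathbb{Z}^{1+k+r}\times\mathbb{Z}_2^{q-1}$ when $q=m-2r\ge1$. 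The relation $2\deg z_l=\deg z$ does not force $\deg z_l$ to be torsion; it makes $\deg z_l$ a square root of $\deg z$. Equivalently, the support of the grading as written in (\ref{gradsuper}) generates only an index-two subgroup of $\mathbb{Z}^{1+k+r}\times\mathbb{Z}_2^{m-2r}$ (the homomorphism sending a degree $(n;v;w;t)$ to $\bar n+\sum_i\bar v_i+\sum_j\bar w_j+\sum_l t_l\in\mathbb{Z}_2$ kills every element of the support), and that subgroup is isomorphic to $\mathbb{Z}^{1+k+r}\times\mathbb{Z}_2^{q-1}$. This slip is exactly what your toral count rests on: with the corrected universal group, the torsion-free criterion gives torality precisely when $m-2r\le1$, not $m-2r=0$. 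Concretely, for $m=1$ the unique fine grading $\Gamma^0$ can be realized over $\mathbb{Z}^{k+1}$ by taking $\deg z_1$ a free generator $t$ and $\deg z=2t$, and it is induced by the torus $f(z_1)=\mu z_1$, $f(z)=\mu^2z$, $f(e_i)=\lambda_ie_i$, $f(\hat e_i)=\mu^2\lambda_i^{-1}\hat e_i$, so it is toral, contrary to the count you claim to have confirmed. (The same unverified identification of the grading group appears in the paper at (\ref{gradsuper}), and the paper offers no explicit argument for the toral assertion, so you have put your finger on the right place to look; but your verification as written contains the computational error that conceals the problem rather than a proof of the statement, and for odd $m$ the toral claim cannot be rescued by this route.)
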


The inequivalence follows easily from the fact that the universal grading groups are not isomorphic.
Note that Theorem~\ref{topa} is a particular case of Theorem~\ref{superteo} for $m=0$.\smallskip


Next, let us compute the group $\aut(H_{n,m})$ of automorphisms of
$H_{n,m}$. Recall that $\aut(H_{n,m})$ is formed by the linear
automorphisms $f \colon H_{n,m} \to H_{n,m}$ such that $f([x,y]) =
[f(x),f(y)]$ for all $x, y \in H_{n,m}$ and $f((H_{n,m})_i) =
(H_{n,m})_i$ for $i \in \{0, 1\}$.

If we identify $(H_{n,m})_1$ with the underlying vector space
endowed with the inner product $\langle\ ,\ \rangle$, then we can
consider the group $\hbox{GO}((H_{n,m})_1)=\{g \in {\rm
End}((H_{n,m})_1) : \langle g(x),g(y) \rangle = \lambda \langle
x,y \rangle \,\forall x,y \in (H_{n,m})_1$ for some $\lambda \in
{\mathbb F}^{\times}\}$, and the group homomorphism
$\pi\colon\aut(H_{n,m})\to \hbox{GO}((H_{n,m})_1)$ such that
$\pi(f)=f\vert_{(H_{n,m})_1}$. We prove next that $\pi$ is an
epimorphism. Given $f\in\hbox{GO}((H_{n,m})_1)$, we know that there
is $\lambda_f\in \mathbb{F}^{\times}$ such that $\langle
f(x),f(y)\rangle=\lambda_f\langle x,y\rangle$. Then we can define
the linear map
 $\hat f\colon H_{n,m}\to H_{n,m}$ such that $\hat f$ restricted to
$(H_{n,m})_1$ is $f$ and $\hat f(z):=\lambda_f z$, $\hat
f(e_i):=\lambda_f e_i$ and $\hat f(\hat e_i):=\hat e_i$ for any
$i$. Then $\hat f\in\hbox{aut}(H_{n,m})$ and $\pi(\hat f)=f$.
Furthermore $\ker(\pi)\cong\hbox{Sp}(P)\times {\hbox{\hu
F}}^{2k}$, taking into account the results in
Section~\ref{Sec_normalesHei} and that any $f\in
\hbox{aut}(H_{n,m})$ such that $\pi(f)=f\vert_{(H_{n,m})_1}=1$
satisfies that $f(z)=z$. Therefore we have a short exact sequence
$$1\to \hbox{Sp}(P)\ltimes {\hbox{\hu F}}^{2k}\buildrel i\over
\longrightarrow\aut(H_{n,m}) \buildrel \pi\over \longrightarrow
\hbox{GO}((H_{n,m})_1)\to 1$$ which is split since $j\colon
\hbox{GO}((H_{n,m})_1)\to \aut(H_{n,m})$ defined by $j(f):=\hat f$
satisfies $\pi j=1$. Then $\aut(H_{n,m})$ is the semidirect
product $$\aut(H_{n,m})\cong (\hbox{Sp}(P)\ltimes {\hbox{\hu
F}}^{2k})\rtimes \hbox{GO}((H_{n,m})_1).$$ \smallskip


Finally, we compute the Weyl groups of the  fine gradings on $H_{n,m} $ described in Theorem~\ref{superteo}.
Consider, as in Section~\ref{Sec_normalesHei}, the maps
$\tilde\sigma,\mu_i\in\aut((H_{n,m})_0)$ if $\sigma\in S_k$, $i\le
k$, and extend
 to automorphisms of $H_{n,m}$ by setting  $\tilde\sigma\vert_{(H_{n,m})_1}=\mu_i\vert_{(H_{n,m})_1}=\id$.
Thus, $  \tilde\sigma,\mu_i  \in \aut(\Gamma^r)$, for $\Gamma^r$
the grading in (\ref{gradsuper}). Take too, for each permutation
$\sigma \in S_r$, the map $\bar\sigma \colon H_{n,m} \to H_{n,m}$
given by $\bar\sigma\vert_{(H_{n,m})_0}= \id$, $\bar\sigma(u_i) =
u_{\sigma(i)}$, $\bar\sigma(v_i) = v_{\sigma(i)}$ and
$\bar\sigma(z_l) = z_l$. Also consider for each permutation $\rho
\in S_q$, the map $\hat\rho \colon  H_{n,m}  \to  H_{n,m} $ given
by $\hat\rho\vert_{(H_{n,m})_0}= \id$, $\hat\rho(u_i) = u_i$,
$\hat\rho(v_i) = v_i$  and $\hat\rho(z_l)=z_{\rho(l)} $. Finally
consider for each index $i \le r$, the map $\mu'_i \colon H_{n,m}
\to H_{n,m}$ given by $\mu'_i \vert_{(H_{n,m})_0}= \id    $,
$\mu'_i(u_i) = v_i, \mu'_i(v_i) = -u_i, \mu'_i(u_k) = u_k,
\mu'_i(v_k) = v_k$  for $k \ne i$  and $\mu'_i(z_l) = z_l$. It is
clear   that   $\bar\sigma, \hat\rho,\mu'_i \in \aut(\Gamma^r)$ in
any case.

\begin{pr}
The Weyl group $\mathcal{W}(\Gamma^r)$ is generated by $[\mu_1]$,
$[\mu'_1]$, $\{[\tilde \sigma] : \sigma\in S_k\}$, $\{[\bar
\sigma] : \sigma\in S_r\}$ and $\{[\hat \sigma] : \sigma\in
S_q\}$, with $k=(n-1)/2, q=m-2r$. 
%
\end{pr}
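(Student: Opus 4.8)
The plan is to follow closely the pattern of Proposition~\ref{pr_weylfina}, which handled the case $r=0$, $q=0$ (i.e.\ the purely even situation), and to combine it with the analogous argument for the odd part. Let $f\in\aut(\Gamma^r)$ be arbitrary. Since $f$ permutes the one-dimensional homogeneous components and leaves $\mathbb{F}z$ invariant (as $z$ spans the center), $f$ must permute the sets $\{\langle e_i\rangle,\langle\hat e_i\rangle\}_{i\le k}$ among themselves, $\{\langle u_j\rangle,\langle v_j\rangle\}_{j\le r}$ among themselves, and $\{\langle z_l\rangle\}_{l\le q}$ among themselves; this is because $f$ must preserve the parity decomposition $H_{n,m}=(H_{n,m})_0\oplus(H_{n,m})_1$ and, within the odd part, must distinguish the isotropic lines $\langle u_j\rangle,\langle v_j\rangle$ (on which $\esc{\cdot,\cdot}$ vanishes) from the anisotropic lines $\langle z_l\rangle$ (on which it does not, using char$\,\mathbb{F}\ne2$). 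I would make this observation explicit first, noting that $\esc{x,x}=0$ for $x\in\langle u_j\rangle\cup\langle v_j\rangle$ while $\esc{z_l,z_l}\ne0$, and that this property is preserved up to the scalar $\lambda_f$ by which $f$ scales the form.

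Next I would kill the action of $f$ on the even part exactly as in Proposition~\ref{pr_weylfina}: multiplying $f$ on the left by a suitable product of the $\mu_i$'s and $\tilde\sigma$'s, I can assume $f$ stabilizes each $\langle e_i\rangle$ and each $\langle\hat e_i\rangle$ (the argument that $f'(\hat e_i)$ cannot lie in the centralizer of $e_i$ is identical, since the $\mu_i,\tilde\sigma$ act trivially on the odd part and so do not disturb what has been arranged there). Then I would repeat the same argument on the $u,v$-block: since $f$ permutes the pairs $\{\langle u_j\rangle,\langle v_j\rangle\}$ and $[u_j,v_j]=z\ne0$, after multiplying by suitable $\mu'_j$'s (to swap $u_j\leftrightarrow v_j$ when needed) and $\bar\sigma$'s (to fix which pair goes where) I can assume $f$ stabilizes each $\langle u_j\rangle$ and $\langle v_j\rangle$. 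The same bracket argument $[f(u_j),f(v_j)]=f(z)\in\mathbb{F}^\times z$ forces the correct pairing. Finally, since $f$ merely permutes the $q$ lines $\langle z_l\rangle$, multiplying by one $\hat\rho$ makes $f$ stabilize each $\langle z_l\rangle$ too. At this point $f$ fixes every homogeneous component, i.e.\ $f\in\stab(\Gamma^r)$, which shows the listed classes generate $\mathcal{W}(\Gamma^r)$.

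The main subtlety, and the only place the superalgebra structure genuinely enters, is the separation argument in the first paragraph: one must be sure that an automorphism of the graded superalgebra cannot send a line of type $\langle u_j\rangle$ to one of type $\langle z_l\rangle$. This follows because any $f\in\aut(H_{n,m})$ preserves the parity and scales $\esc{\cdot,\cdot}$ by a nonzero scalar $\lambda_f$ (as in the computation of $\aut(H_{n,m})$ above, where $\pi(f)\in\mathrm{GO}((H_{n,m})_1)$), so it carries isotropic vectors to isotropic vectors and anisotropic to anisotropic; hence $f$ preserves the partition of the odd homogeneous lines into the $2r$ isotropic ones and the $q$ anisotropic ones. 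One should also remark, as in Section~\ref{Sec_normalesHei}, that although each $\mu_i$ and each $\mu'_j$ has order $4$ in $\aut(\Gamma^r)$, their images $[\mu_i],[\mu'_j]$ have order $2$ in the Weyl group, and that the relations $\tilde\sigma\mu_i=\mu_{\sigma(i)}\tilde\sigma$ and $\bar\sigma\mu'_j=\mu'_{\sigma(j)}\bar\sigma$ hold, so that in fact a single representative $[\mu_1]$ (resp.\ $[\mu'_1]$) together with the permutation classes suffices to generate the whole group; the $\bar\sigma$, $\hat\rho$ and $\tilde\sigma$ commute appropriately among blocks acting on disjoint parts. The routine verification that these maps indeed lie in $\aut(\Gamma^r)$ has already been recorded before the statement, so nothing further is needed there.
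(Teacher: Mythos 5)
Your proposal is correct and follows essentially the same route as the paper: first reduce $f$ on the even part using the $\mu_i$'s and $\tilde\sigma$'s (the case of Proposition~\ref{pr_weylfina}), then on the isotropic odd pairs using the $\mu'_j$'s and $\bar\sigma$'s, then on the lines $\langle z_l\rangle$ using a single $\hat\rho$, finishing with the commutation relations. Your isotropy/GO argument for why no $\langle u_j\rangle$ can be sent to a $\langle z_l\rangle$ is just a repackaging of the paper's bracket computation $0=f([u_1,u_1])$ versus $z_l\circ z_l=z\ne0$, so there is no substantive difference.
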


\begin{proof}
We know by Section~\ref{Sec_normalesHei} that the subgroup $W$ of
$\mathcal{W}(\Gamma^r)$ generated by the classes of the elements
fixing all the homogeneous components of $(H_{(n,m)})_1$  is
$\{[\mu_{i_1} \dots \mu_{i_s}\tilde\sigma]  : \sigma\in
S_k,\,1\le i_1\le\dots\le i_s\le k\}$.

Let $f$ be an arbitrary element in $\aut(\Gamma^r)$. As
$f\vert_{(H_{(n,m)})_0 }$ preserves the grading $\Gamma$ in
Equation~(\ref{finasobregrupouniversal}), then we can compose $f$
with an element in $W$ to assume that $f$ preserves all the
homogeneous components of $(H_{(n,m)})_0$.

Then the element $f(u_1)$ belongs to some homogeneous component of
$(H_{(n,m)})_1$, but it does not happen that there is $j \le q$
such that $f(u_1) \in \mathbb{F}z_j$, since then
$0=f([u_1,u_1])=z_j\circ z_j=z$. So  there is an index $i \le r$
such that  either $f(u_1) \in \mathbb{F}u_i$  or $f(u_1) \in
\mathbb{F}v_i$.
%
%
%
%
The same arguments as in the proof of
Proposition~\ref{pr_weylfina} show that we can replace $f$ by
$\mu'_{j_1} \dots \mu'_{j_s}\bar\sigma f$ for $1\le j_1\le\dots\le
j_s\le r$ and $\sigma\in S_r$ to get that $f(u_i)\in
\mathbb{F}u_i$ and $f(v_i)\in \mathbb{F}v_i$ for all $i\le r$.

Now it is clear that  $f(z_1) \in \mathbb{F}z_l$ for some $1 \le l
\le q$. If $l\ne1$, we can replace $f$ with $ \hat\rho f$, for
$\rho = (1,l)$, so that we can assume $f(z_1) \in \mathbb{F}z_1$.
And, in the same way, we can assume that $f(z_l) \in
\mathbb{F}z_l$ for $1 \le l \le q$. Our new $f$ belongs to
$\stab(\Gamma^r)$.

The proof finishes if we observe that $\bar\sigma\mu'_i =
\mu'_{\sigma(i)}\bar\sigma$ for all $i \le r$ and $\sigma\in S_r$,
and that
  $\bar\sigma$ as well as  $\mu'_i$ commute with $\hat \rho$ for all $\rho\in S_q$.
\end{proof}

Hence, an arbitrary element in $\mathcal{W}(\Gamma^r) $ is
$$
[\mu_{i_1} \dots \mu_{i_s}\tilde\sigma\mu'_{j_1} \dots
\mu'_{j_t}\bar\eta\hat\rho]
$$
for $1\le i_1\le\dots\le i_s\le k$, $1\le j_1\le\dots\le j_t\le
r$, $\tilde\sigma \in S_k$, $ \bar\eta\in S_r$, $\hat\rho \in
S_q$, so that  $\mathcal{W}(\Gamma^r)$ is isomorphic to
$$
   (\mathcal{P}(\{1,\dots,k\})\rtimes S_k)\times (\mathcal{P}(\{1,\dots,r\})\rtimes S_r)\times S_q
 $$
 with the product as in Section~\ref{Sec_normalesHei}, and in a more concise form,
 $$
 \mathcal{W}(\Gamma^r)\cong\mathbb{Z}_2^{r+k}\rtimes (S_k\times S_r\times S_q).
   $$


 \section{An application to Heisenberg Lie color algebras}

The base field $\mathbb{F}$ will be supposed
throughout  this section algebraically closed and of
characteristic other than 2, as in Section~4.  Lie color algebras were introduced
in \cite{1} as a generalization of Lie superalgebras and hence of
Lie algebras. This kind of algebras has attracted the interest of
several authors in the last years, (see \cite{Yo, Dmitri, Price,
Zhang, Xueme}),
 being also remarkable the important role they
play in theoretical physic, specially in conformal field theory
and supersymmetries (\cite{JMP2,JMP1}).

\medskip

\begin{de}
Let  $ G$ be an
 abelian  group. A {\rm
skew-symmetric bicharacter}  of   $G$ is a map $\epsilon\colon G
\times G \longrightarrow {\hu F}^{\times}$ satisfying

$$\epsilon(g_1,g_2)= \epsilon(g_2,g_1)^{-1},$$
$$
\epsilon(g_1,g_2+g_3)= \epsilon(g_1,g_2)\epsilon(g_1,g_3),
$$
for any $g_1,g_2,g_3 \in G$.
\end{de}

Observe  that $\epsilon(g,{0}) = 1$ for any $g \in G$, where $0$
denotes the identity element of $G$.

\begin{de}
Let $L = \bigoplus\limits_{g \in G}L_g$ be a $G$-graded ${\hu
F}$-vector space. For a nonzero homogeneous element $v \in L$,
denote by  $\deg{v}$     the unique  element in $G$ such that $v
\in L_{\deg{v}}$. We shall say that $L$ is a {\rm Lie color
algebra} if it is endowed with an  ${\hu F}$-bilinear map (the {\it
Lie color bracket})
$$[\cdot,\cdot]\colon L \times L \longrightarrow L$$ satisfying
$[L_g,L_h]\subset L_{g+h}$ for all $g,h\in G$ and

$$[v,w] = -\epsilon(\deg{v},\deg{w})[w,v], \hspace{0.2cm} \hbox{{\rm(color skew-symmetry)}}$$
$$[v,[w,t]] = [[v,w],t] + \epsilon(\deg{v}, \deg{w})[w,[v,t]], \hspace{0.2cm}  \hbox{{\rm(Jacobi color identity)}}$$

\noindent for all homogeneous elements $v,w,t \in L $ and for some
skew-symmetric bicharacter $\epsilon$.
\end{de}

Two  Lie color algebras are {\it isomorphic} if they are
isomorphic as graded algebras.

Clearly any Lie algebra is a Lie color algebra and also   Lie
superalgebras are examples of Lie color algebras  (take $G = {\hu
Z}_2$ and $\epsilon(i,j) = (-1)^{ij},$ for any $i,j \in {\hu
Z}_2).$

Heisenberg Lie color algebras have been previously considered in
the literature (see \cite{chinos}). Fixed a skew-symmetric
bicharacter $\epsilon\colon G \times G \longrightarrow {\hu
F}^{\times}$, a Heisenberg Lie color algebra $H$ is defined in
\cite{chinos}  as a $G$-graded vector space, where $G$ is a
torsion-free abelian group with a basis $\{ \epsilon_1,
\epsilon_2,..., \epsilon_n\}$, in the form
$$H=\bigoplus\limits_{i=1}^{n} {\hu F} p_i \oplus \bigoplus\limits_{j=1}^{n} {\hu F}
q_j \oplus  {\hu F}  c$$ where $p_i \in H_{\epsilon_i}$, $q_j \in
H_{-\epsilon_j}$, and $c \in H_0$; and where the Lie color bracket
is given by $[p_i, q_i]=\delta_{ij}c$ and  $[p_i, p_j]=[q_i,
q_j]=[p_i, c]=[c, p_i]=[q_j, c]=[c, q_j]=0$.

Observe that, following this definition,  the class of  Heisenberg
 superalgebras is not contained in the one of  Heisenberg Lie
color algebras. Hence, this definition seems to us very
restrictive. So let us briefly discuss about the concept of
Heisenberg Lie color algebras. Any  Heisenberg  algebra
(respectively Heisenberg  superalgebra)  $H$  is characterized
among the Lie algebras  (respectively among the Lie superalgebras)
for satisfying $[H,H]={\mathcal Z}(H)$ and $\dim ({\mathcal
Z}(H))=1$.  Hence, it is natural to introduce the following
definition.
\begin{de}\label{colorH}
A  Heisenberg Lie color algebra is  a Lie color algebra $L$ such
that $[L,L]={\mathcal Z}(L)$ and $\dim ({\mathcal Z}(L))=1$.
\end{de}

\noindent \textbf{Examples.}

\textbf{1.} The above so called  Heisenberg Lie color algebras,
given in \cite{chinos}, satisfy these conditions and so can be
seen as  a particular case of the ones given by
Definition~\ref{colorH}. As examples of Heisenberg Lie color
algebras we also have the Heisenberg   algebras ($G=\{0\}$) and
the Heisenberg  superalgebras ($G=\{  \mathbb{Z}_2\}$). We can
also consider any Heisenberg $G$-graded algebra as a Heisenberg
Lie color algebra for the group $G$ and the trivial bicharacter
given by $\epsilon(g,h)=1$ for all $g,h\in G$.

\medskip



\textbf{2.}  Any $G$-graded $L=\oplus_{g\in G} M_g$ Heisenberg
superalgebra $L=L_0\oplus L_1$
 gives rise to  a Heisenberg Lie
color algebra relative to the group $G \times {\mathbb Z}_2$ and
an adequate skew-symmetric bicharacter $\epsilon$. In fact, we
just have to $(G \times {\mathbb Z}_2)$-graduate  $L$ as $
L=\oplus_{(g,i)\in G\times {\mathbb Z}_2} M_{(g,i)}$ where
$M_{(g,i)}=M_g$ with $M_{(g,i)}\subset L_i$, and define $\epsilon:
(G\times {\mathbb Z}_2) \times (G\times {\mathbb Z}_2) \to {\hu
F}^{\times}$ as $\epsilon((g,i), (h,j)):= (-1)^{ij}$. Observe that
both gradings on $L$ are equivalent.

\medskip

\textbf{3.}
 Consider some $g_0 \in G $, a graded vector
space $V=\bigoplus\limits_{g \in G}V_g$ such that  $\dim
(V_g)=\dim (V_{-g+g_0})$ for any $g \notin \{g_0,0\}$ and $\dim
(V_{g_0})=\dim (V_{0})+1$ in case $g_0 \neq 0$; and any
skew-symmetric bicharacter $\epsilon \colon G \times G
\longrightarrow {\hu F}^{\times}$ satisfying $\epsilon(g,g)=-1$
for any $g \in G$ such that $2g=g_0$ and $V_g \neq 0$. Fix bases
 $\{z, u_{g_0,1},...,
u_{g_0,n_{g_0}}\}$ and $\{\hat{u}_{g_0,1},...,
\hat{u}_{g_0,n_{g_0}}\}$ of $V_{g_0}$ and $V_{0}$ respectively
when $g_0 \neq 0$ or $\{z, u_{g_0,1},...,
u_{g_0,n_{g_0}},\hat{u}_{g_0,1},..., \hat{u}_{g_0,n_{g_0}}\}$ when
$g_0=0$. For any subset $\{g, -g+g_0\} \neq \{g_0,0\}$ of $G$, fix
also basis $\{u_{g,1},..., u_{g,n_g}\}$ and $\{\hat{u}_{g,1},...,
\hat{u}_{g,n_g}\}$ of $V_g$ and $V_{-g+g_0}$ in case $2g \neq g_0$
and $\{u_{g,1},..., u_{g,n_g}\}$ basis of $V_g$ in case  $2g =
g_0.$ Then  by defining a product on $V$ given by
$[{u_{g,i}},\hat{u}_{g,i}]=z$,
$[\hat{u}_{g,i},{u_{g,i}}]=-\epsilon(-g+g_0,g) z$ in the cases
$g=0$ or $2g \neq g_0$, $[{u}_{g,i},{u_{g,i}}]=z$ in the cases
 $2g = g_0$ with $g \neq 0$, and the remaining brackets zero,  for any
subset $\{g, -g+g_0\}$ of $G$, we get that $V$ becomes a
Heisenberg Lie color algebra that we call of {\it type $(G,g_0,
\epsilon)$}. We note that for an easier notation we allow empty
basis   in the above definition which correspond to trivial
subspaces $\{0\}$.

\medskip

\begin{lemma}\label{distintos} A Heisenberg Lie color algebra $L$ of type $(G,g_0, \epsilon)$    is a graded Heisenberg superalgebra if and only if  $\epsilon(g,-g+g_0)
\in \{\pm1\}$ for any  $g \in G$ such that $L_g \neq 0$.
 \end{lemma}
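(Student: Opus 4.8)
The plan is to reduce the statement to a parity–assignment problem. First I would record, from the definition of type $(G,g_0,\epsilon)$, that the only nonzero brackets of $L$ are $[u_{g,i},\hat u_{g,i}]=z$ (with $u_{g,i}\in L_g$ and its partner $\hat u_{g,i}\in L_{-g+g_0}$), together with $[u_{g,i},u_{g,i}]=z$ when $2g=g_0$, $g\ne0$; the associated color skew-symmetry then forces $[\hat u_{g,i},u_{g,i}]=-\epsilon(-g+g_0,g)z$. Since $[L,L]=\mathcal Z(L)=\mathbb{F}z$, every triple bracket vanishes, so both the color Jacobi identity and the super Jacobi identity are automatic; the whole issue is whether one can match the skew-symmetry relations. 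The key observation is that in a Heisenberg superalgebra the only sign that can appear in a skew-symmetry relation between homogeneous $x,y$ with $[x,y]\ne0$ is $(-1)^{|x||y|}\in\{\pm1\}$, and that a consistent assignment of $\mathbb Z_2$-parities to the $L_g$ reproducing the $\epsilon$-signs turns the color bracket into a super bracket.

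For the ``only if'' direction, assume $L$ admits a $\mathbb Z_2$-grading $L=L_{\bar0}\oplus L_{\bar1}$ making each $L_g$ homogeneous (of parity $p(g)$, say) and $L$ a Heisenberg superalgebra. Because $[L,L]=\mathbb{F}z\ne0$ and a Heisenberg superalgebra satisfies $[L_{\bar0},L_{\bar1}]=0$, the central element $z$ must be even, so $p(g_0)=0$. Now fix $g$ with $L_g\ne0$. If $2g=g_0$ then $\epsilon(g,-g+g_0)=\epsilon(g,g)=-1\in\{\pm1\}$ already by the definition of type $(G,g_0,\epsilon)$. Otherwise take $u_{g,i}$ and $\hat u_{g,i}\in L_{-g+g_0}$; comparing the color relation $[\hat u_{g,i},u_{g,i}]=-\epsilon(-g+g_0,g)z$ with the super relation $[\hat u_{g,i},u_{g,i}]=-(-1)^{p(g)p(-g+g_0)}[u_{g,i},\hat u_{g,i}]=-(-1)^{p(g)p(-g+g_0)}z$ yields $\epsilon(-g+g_0,g)=(-1)^{p(g)p(-g+g_0)}\in\{\pm1\}$, hence $\epsilon(g,-g+g_0)=\epsilon(-g+g_0,g)^{-1}\in\{\pm1\}$.

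For the ``if'' direction, assume $\epsilon(g,-g+g_0)\in\{\pm1\}$ whenever $L_g\ne0$ and define $p(g)\in\mathbb Z_2$ by $(-1)^{p(g)}=\epsilon(g,-g+g_0)$. From $\epsilon(g_0,0)=1$ we get $p(g_0)=0$; from $\epsilon(g,g)=-1$ for $2g=g_0$ we get $p(g)=1$ there; and from $\epsilon(-g+g_0,g)=\epsilon(g,-g+g_0)^{-1}$ together with $\pm1$-valuedness we get $p(-g+g_0)=p(g)$. Setting $L_{\bar i}:=\bigoplus_{p(g)=i}L_g$, the fact that every nonzero bracket lands in $\mathbb{F}z=L_{g_0}$ (of parity $0$) and joins two components of equal parity ($L_g$ with $L_{-g+g_0}$, or $L_g$ with itself) shows that this is a $\mathbb Z_2$-grading of the algebra compatible with the $G$-grading. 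Using $(-1)^{p(g)}=\epsilon(g,-g+g_0)$ and $p(-g+g_0)=p(g)$ one checks that the color skew-symmetry relations $[\hat u_{g,i},u_{g,i}]=-\epsilon(-g+g_0,g)z$ coincide with the super skew-symmetry relations for this $\mathbb Z_2$-grading, so $(L,[\cdot,\cdot])$ becomes a Lie superalgebra. Its even part is $\mathbb{F}z$ together with the dual pairs of parity $0$ — a Heisenberg algebra with one-dimensional centre — its odd part carries the nondegenerate symmetric form given by the remaining brackets, and $[L_{\bar0},L_{\bar1}]=0$; hence $L$, with this $\mathbb Z_2$-grading, is a $G$-graded Heisenberg superalgebra.

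The only genuine subtlety — and the step I expect to be the crux — is the ``if'' direction's verification that the parity $p$ read off from $\epsilon$ really defines a legitimate $\mathbb Z_2$-grading of the algebra, i.e. that $p(-g+g_0)=p(g)$; this is precisely where the hypothesis $\epsilon(g,-g+g_0)\in\{\pm1\}$ is needed (it is equivalent to $\epsilon(-g+g_0,g)=\epsilon(g,-g+g_0)$). Everything else is a short, finite bookkeeping over the explicit list of nonvanishing brackets, and the Jacobi identities never intervene because $L$ is two-step nilpotent with central commutators.
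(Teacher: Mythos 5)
Your proof is correct and follows essentially the same route as the paper's: the ``only if'' direction compares the color skew-symmetry $[\hat u_{g,i},u_{g,i}]=-\epsilon(-g+g_0,g)z$ with the super skew-symmetry on the standard basis of the type $(G,g_0,\epsilon)$ algebra, and the ``if'' direction defines the $\mathbb{Z}_2$-grading by placing each $L_g$ in the even or odd part according to the sign of $\epsilon(g,-g+g_0)$, exactly as in the paper. The one shared subtlety is the assumption in the ``only if'' direction that each $L_g$ sits in a single parity --- the paper asserts this as $L_g+L_{-g+g_0}\subset L_i$, while you build it into your reading of the hypothesis --- and with either reading the same skew-symmetry comparison closes the argument.
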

\begin{proof}  Suppose  $L$ is  a grading
of a Heisenberg superalgebra $L=L_0 \oplus L_1$  and there exists
$g \in G$ with $L_g \neq 0$ and such that  $\epsilon(g,-g+g_0)
\notin \{\pm1\}$. Since $[L_0,L_1]=0$,  $L_{g}+L_{-g+g_0}\subset
L_i$ for some $i \in {\mathbb Z}_2.$ By taking either $u_g \in
L_{g}$ and $\hat{u}_g \in L_{-g+g_0}$ in the cases $g=0$ or  $2g
\neq g_0$;  or $u_g \in L_{g}$ in the case $2g = g_0$ with $g \neq
0$, elements of the standard basis of $(G,g_0, \epsilon)$
described in Example~3, we have either $0 \neq [u_g,\hat{u}_g
]=-\epsilon(g, -g+g_0)[\hat{u}_g,u_g ]$ if $2g \neq g_0$ or $0
\neq [u_g,{u_g} ]=-\epsilon(g, -g+g_0)[{u_g},u_g ]$ if $2g =g_0$,
being $\epsilon(g, -g+g_0) \neq \pm 1$, which contradicts the
identities of a Lie superalgebra.

Conversely, if  $\epsilon(g,-g+g_0) \in \{\pm1\}$ for any $g \in
G$ with $L_g \neq 0$, we can ${\mathbb Z}_2$-graduate   $L$ as
$$L=(\bigoplus\limits_{\{g \in {\rm supp}(G): \epsilon(g,-g+g_0)=1\}}L_g) \oplus
(\bigoplus\limits_{\{h \in {\rm supp}(G):
\epsilon(h,-h+g_0)=-1\}}L_h),$$ this one becoming a Heisenberg Lie
superalgebra, graded by the group generated by the support ${\rm
supp}(G):=\{g\in G:L_g\ne0\}$.
\end{proof}


\begin{pr}\label{color}
Any Heisenberg Lie color algebra  is  isomorphic to  a  Heisenberg
Lie color algebra of type $(G,g_0, \epsilon)$.
\end{pr}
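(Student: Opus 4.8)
The plan is to extract, from an abstract Heisenberg Lie color algebra $L$, enough structure to exhibit it as one of the normalized models of Example~3. First I would fix a grading group: since $L$ is a Lie color algebra it comes with a $G$-grading and a skew-symmetric bicharacter $\epsilon$, and by replacing $G$ with the subgroup generated by $\mathrm{supp}(L)$ we may assume $G$ is generated by the support. The center $\mathcal Z(L)$ is a graded subspace (standard for graded algebras), it is one-dimensional by hypothesis, so $\mathcal Z(L)=\mathbb{F}z$ for a homogeneous element $z$; let $g_0:=\deg z$. Since $[L,L]=\mathcal Z(L)$, every bracket lands in $\mathbb{F}z$, so $L/\mathcal Z(L)$ carries a well-defined $\mathbb{F}$-bilinear form $\langle\cdot,\cdot\rangle$ with values in $\mathbb{F}$ via $[v,w]=\langle v,w\rangle z$; this form is nondegenerate (its radical would be central) and it is $G$-graded in the sense that $\langle L_g,L_h\rangle\ne 0$ forces $g+h=g_0$. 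Hence for each $g$ in the support of $L/\mathcal Z(L)$ there is a unique partner $-g+g_0$ pairing nontrivially with it, and in particular $\dim(L_g)=\dim(L_{-g+g_0})$ for $g\notin\{0,g_0\}$, with the dimension count at $\{0,g_0\}$ twisted by the extra summand $\mathbb{F}z\subset L_{g_0}$, giving exactly the dimension conditions of Example~3.

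Next I would build the claimed basis component by component. For each unordered pair $\{g,-g+g_0\}$ with $g\ne -g+g_0$ (equivalently $2g\ne g_0$), the restriction $\langle\cdot,\cdot\rangle\colon L_g\times L_{-g+g_0}\to\mathbb{F}$ is a perfect pairing of finite-dimensional spaces, so I can pick a basis $\{u_{g,i}\}$ of $L_g$ and the dual basis $\{\hat u_{g,i}\}$ of $L_{-g+g_0}$ with $\langle u_{g,i},\hat u_{g,j}\rangle=\delta_{ij}$; this realizes $[u_{g,i},\hat u_{g,i}]=z$ and, by color skew-symmetry, $[\hat u_{g,i},u_{g,i}]=-\epsilon(-g+g_0,g)z$, matching the model. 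For $g$ with $2g=g_0$ (this can only occur when the relevant homogeneous space is nonzero, and then color skew-symmetry $[v,v]=-\epsilon(g,g)[v,v]$ forces $\epsilon(g,g)=-1$ whenever $L_g\ne 0$ and $[L_g,L_g]\ne0$), the form $\langle\cdot,\cdot\rangle$ restricted to $L_g$ is symmetric and nondegenerate, so over the algebraically closed field $\mathbb{F}$ of characteristic $\ne 2$ it admits an orthonormal basis $\{u_{g,i}\}$ with $[u_{g,i},u_{g,i}]=z$ and distinct basis vectors orthogonal — here I would invoke Lemma~\ref{ovont} (or its proof) applied to $L/\mathcal Z(L)$ with the decomposition into the $L_g$'s, which is exactly the hypothesis \lq\lq each $i$ has a unique partner $j$\rq\rq. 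Finally one throws in $z$ itself, completing a homogeneous basis of $L$ that satisfies the defining relations of type $(G,g_0,\epsilon)$ verbatim.

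It remains to check that the $\epsilon$ we started with indeed satisfies the constraint imposed in Example~3, namely $\epsilon(g,g)=-1$ for every $g$ with $2g=g_0$ and $L_g\ne 0$: this is forced by color skew-symmetry as noted, since for such $g$ a nonzero $v\in L_g$ gives $[v,v]$ a nonzero multiple of $z$ (the pairing on $L_g$ is nondegenerate), and $[v,v]=-\epsilon(g,g)[v,v]$ then yields $\epsilon(g,g)=-1$; if instead $[L_g,L_g]=0$ for all nonzero $v$, one can shrink the bicharacter's relevant values harmlessly, but in fact nondegeneracy of the symmetric form prevents that case. With the basis and the compatibility of $\epsilon$ in hand, the linear map sending this basis of $L$ to the standard basis of the type $(G,g_0,\epsilon)$ algebra is by construction a graded algebra isomorphism, which is what \lq\lq isomorphic as Lie color algebras\rq\rq\ means. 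The main obstacle I anticipate is bookkeeping rather than conceptual: one must treat the three cases $g=0$, $2g=g_0$ with $g\ne 0$, and the generic paired case uniformly, and be careful that the \lq\lq extra\rq\rq\ dimension from $z$ is distributed correctly between $L_0$ and $L_{g_0}$ (and that when $g_0=0$ these coincide), so that the dimension hypotheses of Example~3 are matched exactly; the algebraic input (perfect pairings, orthonormalization) is routine given Lemma~\ref{ovont}.
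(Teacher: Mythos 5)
Your overall strategy is essentially the paper's: $z$ is homogeneous of some degree $g_0$, the bracket induces a graded bilinear form that pairs $L_g$ with $L_{-g+g_0}$, and a homogeneous basis realizing the model of Example~3 is built pair by pair (dual bases when $2g\ne g_0$, a diagonal basis when $2g=g_0$). The differences are organizational: the paper splits into the cases $g_0=0$ (where it reduces to a graded Heisenberg superalgebra via the splitting by $\epsilon(g,g)=\pm1$) and $g_0\ne 0$ (where it treats $L_{g_0}\oplus L_0$ as an ordinary Heisenberg algebra using Section~\ref{Sec_normalesHei}), while you work uniformly with the form on $L/\mathcal{Z}(L)$; also, Lemma~\ref{ovont} cannot be invoked literally, since the quotient form is neither symmetric nor alternating globally, but the component-by-component linear algebra you describe is the same as the paper's.

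There is, however, a genuine gap at the self-paired case $2g=g_0$, $g\ne 0$, $L_g\ne 0$, which is precisely the delicate point of the proposition. You argue that nondegeneracy of the restriction of $\langle\cdot,\cdot\rangle$ to $L_g$ yields some $v$ with $[v,v]\ne 0$, whence $\epsilon(g,g)=-1$ and the restriction is symmetric. That inference fails: the identity $[v,v]=-\epsilon(g,g)[v,v]$ only gives $\epsilon(g,g)=-1$ if some $[v,v]\ne0$, and knowing $[L_g,L_g]\ne 0$ (or nondegeneracy) does not produce such a $v$ — if $\epsilon(g,g)=1$ the restricted form is alternating, and a nondegenerate alternating form on an even-dimensional space satisfies $[v,v]=0$ for every $v$ while being nonzero. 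Your closing remark that ``nondegeneracy of the symmetric form prevents that case'' presupposes the symmetry you are trying to establish, so the alternative $\epsilon(g,g)=1$ is not excluded by your argument, and without excluding it you cannot produce the basis with $[u_{g,i},u_{g,i}]=z$ required by the type $(G,g_0,\epsilon)$ model. The paper disposes of this case by a different (one-line) assertion, namely that $\epsilon(g,g)=1$ would force $0\ne L_g\subset\mathcal{Z}(L)\subset L_{g_0}$, a contradiction; your proposal as written leaves the case open, so the reduction to Example~3 is incomplete at exactly this step.
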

\begin{proof}
 Consider  a Heisenberg Lie
color algebra $L= \bigoplus\limits_{g \in G}L_g$.  
We rule out the trivial case $G=\{0\}$ which gives the Heisenberg Lie color algebra of type $(\{0\},0,1)$, that is,  a Heisenberg algebra with trivial grading.
Since $\dim({\mathcal Z}(L))=1$, we can write ${\mathcal Z}(L)=\langle z
\rangle$, being $z=\sum_{i=1}^n x_{g_i}$, with any $0 \neq x_{g_i}
\in L_{g_i}$ and $g_i \neq g_j$ if $i \neq j$. If $n\ne1$, then
$[x_{g_i}, L_h]\subset L_{g_i+h}\cap\span{z}=0$ for any $h \in G$
and $i \in \{1,..,n\}$. Hence any $x_{g_i} \in {\mathcal
Z}(L)=\span{z}$, a contradiction. Thus $z \in L_{g_0}$ for some
$g_0 \in G$.  Now the fact $[L,L]\subset{\mathcal Z}(L)$ gives us
that for any $g \in G$,   $[L_g, L_h]=0$ if $g+h \neq g_0$, and
consequently  $[L_g, L_{-g+g_0}]\neq 0$ if $g \neq g_0$.

Since for any skew-symmetric bicharacter $\epsilon\colon G \times
G \longrightarrow {\hu F}^{\times}$ and $g \in G$ we have
$\epsilon(g,g)\in \{\pm 1\}$, we can ${\mathbb Z}_2$-graduate  $L$
as
$$
L= (\bigoplus\limits_{\{g \in G: \epsilon(g,g)=1\}}L_g ) \oplus
 (\bigoplus\limits_{\{h \in G: \epsilon(h,h)=-1\}}L_h ).
$$
This is a grading on the algebra $L$ since
$\epsilon(g+h,g+h)=\epsilon(g,g)\epsilon(g,h)\epsilon(h,g)\epsilon(h,h)=
\epsilon(g,g)\epsilon(h,h)$.

Let us distinguish two cases, according to the dichotomy $g_0=0$ or $g_0\ne 0$.

 First, assume $g_0=0$.
 For any $g \in G$ it is easy to check that
$\epsilon(g,g)=\epsilon(-g,-g)=\epsilon(g,-g)=\epsilon(-g,g) \in
\{\pm 1\}.$
This fact together  with the observations in the previous
paragraph tell us  that $L=(\bigoplus\limits_{\{g \in G:
\epsilon(g,g)=1\}}L_g) \oplus (\bigoplus\limits_{\{h \in G:
\epsilon(h,h)=-1\}}L_h)$ is actually a   Lie superalgebra,
satisfying $[L,L]={\mathcal Z}(L)$ and $\dim ({\mathcal Z}(L))=1$,
and  being the initial  Lie color grading a refinement of the
${\mathbb Z}_2$-grading as superalgebra. By arguing as in
\cite{Camacho} we easily get that  $L$ is of type $(G,0,
\epsilon)$, being also a grading of a Heisenberg superalgebra.

Second, assume that $g_0 \neq 0$. Since
$[L,L]\subset\span{z}\subset L_{g_0}$, then
$[L_{g_0},L_{g_0}]\subset L_{2g_0}\cap L_{g_0}=0$, so that
 $L^{\prime}:=L_{g_0} \oplus L_0$ is a  Lie algebra (take into consideration $\epsilon(g_0,0)=\epsilon(0,0)=1$).
If $L_0=0$, then  $L^{\prime}=\langle z \rangle$ and otherwise
$[L^{\prime},L^{\prime}]={\mathcal Z}(L^{\prime})$ with $\dim
({\mathcal Z}(L^{\prime}))=1$. In the second situation, we have
that $L^{\prime}=L_{g_0} \oplus L_0$ is a Heisenberg algebra, so
that taking into account Section~\ref{Sec_normalesHei}, the
grading is toral and there exist basis $\{z, u_{g_0,1},...,
u_{g_0,n_{g_0}}\}$ and $\{\hat{u}_{g_0,1},...,
\hat{u}_{g_0,n_{g_0}}\}$ of $L_{g_0}$ and $L_{0}$ respectively
such that $[{u_{g_0,i}},\hat{u}_{g_0,i}]=z$,
$[\hat{u}_{g_0,i},{u_{g_0,i}}]=-\ z$ and such that the remaining
products in $L^{\prime}$ are zero.
Consider now any  subset $\{g, -g+g_0\} \neq \{g_0,0\}$ of $G$. In
case $L_g \neq 0$, then necessarily $L_{-g+g_0} \neq 0$ and we can
distinguish two possibilities. First, if $2g \neq g_0$, the facts
$[L_g, L_h]=0$ if $g+h \neq g_0$ and $[L_g, L_{-g+g_0}]=\langle z
\rangle$ allow us to apply  standard linear algebra arguments to
obtain  basis $\{u_{g,1},..., u_{g,n_g}\}$ and
$\{\hat{u}_{g,1},..., \hat{u}_{g,n_g}\}$ of $L_g$ and $L_{-g+g_0}$
such that $[{u_{g,i}},\hat{u}_{g,i}]=z$,
$[\hat{u}_{g,i},{u_{g,i}}]=-\epsilon(-g+g_0,g) z$ and being null
the rest of the products among the elements of the basis. Second,
in the case $2g = g_0$, a similar   argument gives us
$\{u_{g,1},..., u_{g,n_g}\}$ a basis of $L_g$ such that
$[{u}_{g,i},{u_{g,i}}]=z$ with the remaining brackets zero. Also
observe that necessarily $\epsilon(g,g)=-1$ for any $g \in G$ such
that $2g=g_0$ and $L_g \neq 0$ because in the opposite case
$\epsilon(g,g)=1$ and then $0\neq L_g \subset {\mathcal Z}(L)
\subset L_{g_0}$, a contradiction.
 Summarizing, we have showed  that $L$ is
isomorphic to a Heisenberg Lie color algebra of type
$(G,g_0,\epsilon)$ with $g_0 \neq 0$.
\end{proof}

\medskip

We finish this section by showing how the results in Section
\ref{Sec_superHei}  can be applied to the classification of
Heisenberg Lie color algebras.
 Following Lemma \ref{distintos} and the arguments in the proof of
Proposition \ref{color}, any Heisenberg Lie color algebra $L=
\oplus_{g\in G}L_g$  is isomorphic to a grading of a Heisenberg
superalgebra if and only if either  ${\mathcal Z}(L) \subset L_0$
or $L$  is of the type $(G,g_0,\epsilon)$ with $\epsilon(g,-g+g_0)
\in \{\pm1\}$ for any  $g \in G$ such that $L_g \neq 0$. In
particular,  this is the case of the Heisenberg Lie color algebras
considered in \cite{chinos}. Hence, $L$ is isomorphic to a
coarsening of  a  fine grading $H_{n,m}=\oplus_{k\in
K}(H_{n,m})_k$ of a Heisenberg superalgebra.
 Since it
is known the procedure  to compute all of the coarsenigs of a
given grading when this is given by its universal group grading
(see Section~2 and \cite{G2}), we can   apply Theorem~\ref{superteo} to get the list of all of these Heisenberg Lie
color algebras $L$, in the moment $G$ and $K$ are generated by
their supports and $K$ is
 the universal grading group. Of course this procedure does not
 hold for a  Heisenberg  Lie color algebra  which is not a grading of a Heisenberg superalgebra.

\section{Gradings on   twisted Heisenberg  algebras}\label{Sec_torcidasHei}

In this final section we consider the so called twisted
Heisenberg algebras. As mentioned in the introduction, these
algebras appear naturally as some of the direct summands of the
Lie algebras of connected Lie groups acting isometrically and
locally faithfully on compact connected Lorentzian manifolds.

The ground field $\mathbb{F}$ will be assumed to be algebraically closed of characteristic zero.

\subsection{Geometric definition of twisted Heisenberg algebra.}

We follow the approach of \cite[Chapter~8]{Adams}, adapted to our context.
Take $\lambda:=(\lambda_1,\ldots,\lambda_k)\in\mathbb
(\mathbb R^\times)^k$. For each $t\in\mathbb{R}$, take $\theta_t\colon H_n\to H_n$ the automorphism of the Lie algebra given in terms of the basis in Equation~(\ref{base1}) by
$$
\begin{array}{rcl}
\theta_t(e_j)&=&\cos(\lambda_jt)\,e_j+\sin(\lambda_jt)\,\hat e_j,\\
\theta_t(\hat e_j)&=&-\sin(\lambda_jt)\,e_j+\cos(\lambda_jt)\,\hat e_j,\\
\theta_t(z)&=&z.
\end{array} 
$$
Thus we have $\{\theta_t: t\in\mathbb{R}\}$ a uniparametric subgroup of $\aut(H_n)$. This induces, by monodromy, an action of $\mathbb{R}$ by Lie group automorphisms of $\mathfrak{H}_n$, the Lie group of $H_n$.  The group $\mathfrak{H}_n^\lambda:=\mathbb{R}\ltimes \mathfrak{H}_n$  is then called a \emph{twisted Heisenberg group}, and its tangent Lie algebra is called a \emph{twisted Heisenberg Lie algebra}, and it is denoted by $H_n^\lambda$.
According to \cite[Lemma 8.2.1]{Adams}, $H_n^\lambda$ has a basis $\{X_1,\dots, X_k,Y_1,\dots,Y_k,W,Z\}$ such that
$[X_i,Y_i]=Z$ (a central element), $[W,X_i]=\lambda_iY_i$ and $[W,Y_i]=-\lambda_iX_i$.

\subsection{Algebraic definition of twisted Heisenberg algebra.}
Consider the Heisenberg algebra $H_n$ over our algebraically closed field $\mathbb F$
and take $d$ to be any derivation of $H_n$. Then one can define in
$\mathbb F\times H_n$ the product
$$[(\lambda,a),(\mu,b)]:=(0,\lambda d(b)- \mu d(a)+[a,b]),
$$
for any $\lambda,\mu\in\mathbb{R}$, $a,b\in H_n$.
This defines a Lie algebra structure in $\mathbb F\times H_n$ and
we will denote this Lie algebra by $H_n^d$. If we
define $u=(1,0)$, then $H_n^d={\mathbb F} u\oplus H_n$ and its
product can be rewritten as $[\lambda u+a,\mu
u+b]=\lambda[u,b]-\mu[u,a]+[a,b]$. Thus,
$d=\ad(u)\vert_{H_n}$.
If $d_1$ and $d_2$ are   derivations of $H_n$ such that $H_n=\textrm{Im}(d_i)+F z$, it is easy to see that
$H_n^{d_1}\cong H_n^{d_2}$ if and only if there is some nonzero scalar
$\lambda_0$, an element $x_0\in H_n$, and  an automorphism $\theta\in\aut(H_n)$ such that
$\theta(\lambda_0 d_1+\hbox{ad}(x_0))\theta^{-1}=d_2$. In particular if the two derivations $d_1$ and $d_2$ are in the same orbit on the action
of $\aut(H_n)$ on $\text{Der}(H_n)$ by conjugation, then both algebras $H_n^{d_1}$ and $H_n^{d_2}$ are isomorphic.
\medskip

There is a more intrinsic definition of this kind of
algebras which is equivalent to that of $H_n^d$. On   one hand,
the Lie algebra $H_n^d$ fits in a split exact sequence
$$
0\to H_n\buildrel{i}\over{\to} H_n^d\buildrel{p}\over{\to}\mathbb
F\to 0
$$
where $i$ is the inclusion map and $p(\lambda,a)=\lambda$ for any
$a\in H_n$. The sequence is split because we can define $j\colon
{\mathbb F}\to H_n^d$ by $j(1)=(1,0)$ and then $pj=1_{\mathbb F}$.
On the other hand if we consider any algebra $A$ which is a split
extension of the type
$$
0\to H_n\to A\buildrel p\over\to\mathbb F\to 0,
$$
then $A$ is isomorphic to some $H_n^d$ for a suitable derivation
$d$ of $H_n$.

Observe that $H_n^d$ is isomorphic to $H_n^\lambda$  if one takes the  particular
derivation of $H_n$ given in terms of the basis $\{z,e_1,
\hat{e}_1,...,e_k, \hat{e}_k\}$ of $H_n$ ($n=2k+1$) by $d(z)=0$,
$d(e_i)=\lambda_i \hat{e}_i$ and $d(\hat{e}_i)=-\lambda_i e_i$ for
a fixed $k$-tuple $\lambda:=(\lambda_1,\ldots,\lambda_k)\in\mathbb
(\mathbb Q^\times)^k\subset(\mathbb F^\times)^k$, since $[u,e_i]=\lambda_i\hat{e}_i$ while $[u,\hat{e}_i]=-\lambda_i e_i$.
This motivates that,  for any choice of $\lambda\in(\mathbb F^\times)^k$, we denote such an algebra by $H_n^\lambda$ and we call it also
a \emph{twisted Heisenberg Lie algebra}, although algebraically it is certain extension of $H_n$ more than a twist.
A definition depending on a   basis is convenient because of our necessity of
making explicit computations when dealing with fine gradings.
Besides note   that under a suitable change of coordinates, the basis 
$\{z,u,e_1,
\hat{e}_1,...,e_k, \hat{e}_k\}$ can be chosen to satisfy
\begin{equation}\label{eq_labasebuenadelatwisted}
[e_i, \hat{e}_i]  = \lambda_iz\,, \quad [u,e_i] =
\lambda_i\hat{e}_i \ \text{ and } \ [u, \hat{e}_i] =
\lambda_ie_i,\,
\end{equation}
where we have used $\I=\sqrt{-1}\in \mathbb{F}$.
Thus, to fix the ideas we give the following:

\begin{de}
Let $\lambda = (\lambda_1,\dots,\lambda_k) \in
(\mathbb{F}^{\times})^k$, $k > 0$. The corresponding {\it twisted
Heisenberg algebra} $H_n^\lambda$ of dimension $n = 2k + 2$ is
the Lie algebra spanned by the elements
\begin{equation} \label{exbasetw}
   \{z,u,e_1,
\hat{e}_1,...,e_k, \hat{e}_k\},
\end{equation}
 and the nonvanishing Lie brackets
are given by Equation~(\ref{eq_labasebuenadelatwisted})
for any $i=1, \dots, k.$
\end{de}

As before, $H_n^\lambda = \mathbb{F}u\oplus H_n$ where $H_n$ can
be identified with the subalgebra spanned by
$\{z,e_1,\hat{e}_1,..., e_k,\hat{e}_k\}$.
This algebra is not nilpotent, but it is solvable, since
$[H_n^\lambda,H_n^\lambda]= H_n$.
\smallskip

The isomorphism condition given above in terms of derivations can be applied to $H_n^\lambda$
and so a direct argument proves that given $\lambda=(\lambda_i)\in ({\mathbb F}^\times)^n$ and
$\mu=(\mu_i)\in ({\mathbb F}^\times)^n$,
one has $H_n^{\lambda}\cong H_n^{\mu}$ if and only if there is a permutation $\sigma\in S_n$ and a scalar $k\in{\mathbb F}^\times$ such that
$\mu_i=k\lambda_{\sigma(i)}$ for all $i\le k$. 
\smallskip

\subsection{Torality and basic examples}\label{subsec_casofaciltwisted}

We are now dealing with two fine gradings on $H_n^\lambda$ which
will be relevant to our work. One of them is toral while the other
is not.
\medskip

A fine grading on $H_n^\lambda$ is obviously provided by our basis
in Equation~(\ref{exbasetw}):
$$H_n^\lambda = \langle z\rangle \oplus \langle u \rangle \oplus
(\oplus_{i=1}^k \langle e_i \rangle ) \oplus (\oplus_{i=1}^k
\langle \hat{e}_i\rangle).$$
Again it is also a
group grading. In order to find $G_0$ the universal 
grading group, note that necessarily (we denote  $\deg x = g$ when
$x \in (H_n^\lambda)_g$) the following assertions about the
degrees are verified:
$$
\begin{array}{l}
\deg{e_i} + \deg{\hat{e}_i} = \deg z,\\
\deg{e_i} + \deg{u} = \deg \hat{e}_i,\\
\deg{\hat{e}_i} + \deg{u} = \deg e_i.
\end{array}
$$
Hence $\deg u \in G_0$ has order $1$ or $2$ and $2\deg{e_i} = \deg
z + \deg u$, so that $\deg z$ can be chosen with freedom and
$2(\deg e_i - \deg\hat{e}_i)=0$ (providing $k-1$ more order two
elements). The universal grading group $G_0$ is the   abelian
group with generators $\deg{u}$, $\deg{z}$, $\deg{e_i}$,
$\deg{\hat{e}_i}$ and relations above. It can be computed to be
$G_0 = \mathbb{Z}\times \mathbb{Z}_2 \times \mathbb{Z}_2^{k-1}$
and the grading is given as follows:
$$
\begin{array}{ll}
\Gamma_1:& (H_n^\lambda)_{(2;\bar 1;\bar 0,  \dots,\bar 0,\dots ,\bar 0)}=\langle {z}\rangle ,\\
& (H_n^\lambda)_{(0;\bar 1;\bar 0,  \dots,\bar 0,\dots ,\bar 0)}=\langle {u}\rangle, \\
& (H_n^\lambda)_{(1;\bar 1;\bar 0,  \dots,\bar 1,\dots ,\bar 0)}=\langle {e_i} \rangle\  \text{ ($\bar1$ in the $i$-th slot when $i\ne k$),}\\
& (H_n^\lambda)_{(1;\bar 0;\bar 0,  \dots,\bar 1,\dots ,\bar 0)}=\langle {\hat{e}_i} \rangle,\\
& (H_n^\lambda)_{(1;\bar 1;\bar 0,  \dots,\bar 0,\dots ,\bar 0)}=\langle {e_k} \rangle,\\
& (H_n^\lambda)_{(1;\bar 0;\bar 0,  \dots,\bar 0,\dots ,\bar 0)}=\langle {\hat{e}_k} \rangle.\\
\end{array}
$$
 The  arguments about  torality  in   Section~$2$    imply  that this fine grading is nontoral.

Now we will find   a toral fine grading. First note that, for
\begin{equation}\label{usyuves}
\begin{array}{l}
u_i := e_i + \hat{e}_i,\\
v_i := e_i - \hat{e}_i,
\end{array}
\end{equation}
the following relations are satisfied:

$$\begin{array}{l}
{[u,u_i]} = \lambda_i u_i,\\
{[u,v_i]} = -\lambda_i v_i,\\
{[u_i,v_i]} = -2\lambda_iz.
\end{array}$$

\begin{pr}
\
\begin{itemize}
\item[a)] The group of automorphisms of $H_n^\lambda$ which
diagonalize relative to the basis $B:=\{u,z\}\cup\{u_i,v_i\mid i=1,\dots,k\}$
is a maximal torus. It is given by the linear group whose elements
are the matrices $\mathop{\hbox{\rm diag}}(1,\gamma
,\ldots,\alpha_i,\gamma\alpha_i^{-1},\ldots)$. \item[b)] In any
toral grading $\Gamma$ of $H_n^\lambda$ the identity element of
the grading group is in the support of $\Gamma$.
\end{itemize}
\end{pr}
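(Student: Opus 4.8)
For part (a), the plan is to first verify that every matrix of the stated form $\mathop{\hbox{\rm diag}}(1,\gamma,\ldots,\alpha_i,\gamma\alpha_i^{-1},\ldots)$ really is an automorphism of $H_n^\lambda$: since the nonzero brackets in the basis $B$ are $[u,u_i]=\lambda_iu_i$, $[u,v_i]=-\lambda_iv_i$ and $[u_i,v_i]=-2\lambda_iz$, a diagonal map $t$ with $t(u)=u$, $t(z)=\gamma z$, $t(u_i)=\alpha_iu_i$, $t(v_i)=\beta_iv_i$ preserves all brackets precisely when $\alpha_i\beta_i=\gamma$ for each $i$, which is exactly the stated shape. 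These automorphisms visibly form a torus $T$ (it is isomorphic to $(\mathbb F^\times)^{k+1}$ via $(\gamma,\alpha_1,\dots,\alpha_k)$). To see $T$ is \emph{maximal}, I would argue that any automorphism of $H_n^\lambda$ commuting with all of $T$ must preserve each one-dimensional common eigenspace $\langle u\rangle,\langle z\rangle,\langle u_i\rangle,\langle v_i\rangle$ (the weight spaces for the $T$-action are distinct provided the $\lambda_i$ are chosen generically; if there are coincidences among the $\lambda_i$ one still separates the corresponding $u_i$'s by the extra toral parameters $\alpha_i$), hence is itself diagonal in $B$, hence lies in $T$. A torus equal to its own centralizer in $\aut(H_n^\lambda)$ is maximal, giving the claim. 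One subtlety to be careful about: the element $u$ cannot be rescaled by an automorphism, since $[u,u_i]=\lambda_iu_i$ forces the coefficient of $u$ to be $1$ on the image of $u$ once we know $u_i\mapsto \alpha_i u_i$; this is why the first diagonal entry is pinned to $1$.

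For part (b), suppose $\Gamma\colon H_n^\lambda=\bigoplus_{g\in G}(H_n^\lambda)_g$ is a toral grading, induced by simultaneous diagonalization of a subtorus $S$ of some maximal torus of $\aut(H_n^\lambda)$; up to conjugation we may take this maximal torus to be $T$ from part (a). The key observation is that $u$, regarded as a derivation $\ad(u)$ of the Heisenberg subalgebra, has eigenvalues $\pm\lambda_i$, and more relevantly: every element of $T$ fixes $u$. Hence $u$ is fixed by $S$, so $u$ lies in the homogeneous component indexed by the \emph{identity} of $G$ (the degree-zero component is exactly the fixed-point set of $S$). Therefore $u\in (H_n^\lambda)_e\neq 0$, so $e$ belongs to the support of $\Gamma$. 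I would phrase this cleanly using the duality between the grading group and the character group of $S$: the trivial character corresponds to the identity of $G$, and $u$ spans a trivial-character subspace.

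The main obstacle, and the point needing the most care, is the maximality assertion in part (a): ruling out that the centralizer of $T$ in $\aut(H_n^\lambda)$ is strictly larger than $T$, \emph{especially} in the degenerate situation where several of the $\lambda_i$ coincide (so that $\ad(u)$ has repeated eigenvalues and the $u_i$ alone are not separated by the $u$-action). The resolution is that the remaining toral coordinates $\alpha_i$ do separate those eigenvectors, so the full weight-space decomposition for $T$ is still into the lines $\langle u\rangle,\langle z\rangle,\langle u_i\rangle,\langle v_i\rangle$; any automorphism centralizing $T$ stabilizes each line, hence is diagonal, hence lies in $T$. Once this is in place, part (b) is essentially immediate from the structure of $T$ together with the general fact recalled in Section~2 that a toral grading comes from diagonalizing inside a maximal torus, all of which are conjugate.
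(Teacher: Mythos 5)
Your proposal is correct and follows essentially the same route as the paper: verify the diagonal automorphisms have the stated form (with the relation $\alpha_i\beta_i=\gamma$ and $\eta=1$ forced by $[u,u_i]=\lambda_iu_i$), prove maximality by noting that the simultaneous eigenspaces of $T$ are the lines $\langle u\rangle,\langle z\rangle,\langle u_i\rangle,\langle v_i\rangle$ so any centralizing automorphism is diagonal, and for (b) use that the character-group action inducing a toral grading lies (up to conjugation) in $T$, which fixes $u$, placing $u$ in the identity component. Your extra remark about possible coincidences among the $\lambda_i$ is a correct refinement that the paper leaves implicit.
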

\begin{proof}
Denote by $T$ the group of automorphisms of $H_n^\lambda$ which
diagonalize relative to the above mentioned basis. Let $f\in T$ and write
$f(u)=\eta u$, $f(z)=\gamma z$, $f(u_i)=\alpha_i u_i$ and
$f(v_i)=\beta_i v_i$ with $\eta,\gamma,\alpha_i,\beta_i\in{\mathbb
F}^\times$. Applying $f$ to $[u,u_i]=\lambda_i u_i$ we get
$\eta\alpha_i=\alpha_i$, hence $\eta=1$. Moreover, since
$[u_i,v_i]=-2\lambda_i z$, again applying $f$ we get $\alpha_i
\beta_i=\gamma$, hence $\beta_i=\gamma\alpha_i^{-1}$. We observe
that $T\cong({\mathbb F}^\times)^{k+1}$ is a torus. To prove its
maximality take an automorphism $g$ commuting with each element in
$T$. Then it must preserve the simultaneous eigenspaces relative
to the elements in $T$. These simultaneous eigenspaces are ${\mathbb F}u$,
${\mathbb F}z$, and all the spaces ${\mathbb F}u_i$ and ${\mathbb
F}v_i$. Thus $g$ must diagonalize $B$ and so $g\in T$.

Finally take a toral grading $\Gamma$ of $H_n^\lambda$ with
grading group $G$. Consider the associated action
$\rho\colon\chi(G)\to\aut(H_n^\lambda)$ where
$\chi(G):=\hom(G,{\mathbb F}^\times)$ is the character group. The
torality of $\Gamma$ implies that $\text{Im}(\rho)$ is contained
in a maximal torus of $\aut(H_n^\lambda)$. Since any two maximal
tori are conjugated, we may assume that $\text{Im}(\rho)\subset
T$. Hence $u$ is fixed by any element in $\text{Im}(\rho)$ and so
it is in the zero homogeneous component $(H_n^\lambda)_0$.
 \end{proof}

 Thus we obtain a toral fine $\mathbb{Z}^{1+k}$-grading
given by
\begin{equation}\label{lagradtoraltorcida}
\begin{array}{ll}
\Gamma_2:&(H_n^\lambda)_{(0;0,\dots,0)}=\langle u\rangle,\\
&(H_n^\lambda)_{(2;0,\dots,0)}=\langle z\rangle,\\
&(H_n^\lambda)_{(1;0,\dots,1,\dots,0)}=\langle u_i\rangle,\\
&(H_n^\lambda)_{(1;0,\dots,-1,\dots,0)}=\langle v_i\rangle,
\end{array}
\end{equation}
if $i=1,\dots,k$.

Observe that the zero homogeneous component of the grading
$\Gamma_1$ is zero, so  applying the last assertion in the
previous proposition we recover the result that this grading is
nontoral.

\medskip

Our first aim is to prove that under suitable conditions on the
vector $\lambda$, $\Gamma_1$ and $\Gamma_2$ are the unique fine
gradings up to equivalence. For that, it is useful to note a more
general fact: in any grading  the element $u$   in Equation~(\ref{exbasetw})
can be taken homogeneous, that is, $u$ can be replaced with a homogeneous element satisfying the same relations.

\begin{lemma}\label{le_uhomogneo}
For each group grading on $H_n^\lambda$, there is a basis $\{z,u',u'_1,v'_1,\dots,u'_k,v'_k\}$  of $H_n^\lambda$ with $u'$ a homogeneous element of the grading,
such that the only nonzero brackets are given by
$$
\begin{array}{l}
{[u',u'_i]} = \lambda_i u'_i,\\
{[u',v'_i]} = -\lambda_i v'_i,\\
{[u'_i,v'_i]} = -2\lambda_iz.
\end{array}
$$
\end{lemma}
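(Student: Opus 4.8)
The plan is to locate a homogeneous element inside $H_n^\lambda$ that plays the same structural role as $u$, and then rebuild a basis of the desired type around it using the geometric lemmas already proved. First I would recall that $[H_n^\lambda,H_n^\lambda]=H_n$ (the Heisenberg subalgebra), and that $H_n$ is a graded subalgebra of $H_n^\lambda$ for any group grading, being the derived subalgebra. The adjoint action of any element of $H_n^\lambda$ on $H_n$ has image contained in $[H_n,H_n^\lambda]$; more to the point, writing an arbitrary element as $\gamma u + a$ with $a\in H_n$, one has $\operatorname{ad}(\gamma u + a)|_{H_n} = \gamma\, d + \operatorname{ad}(a)|_{H_n}$ where $d=\operatorname{ad}(u)|_{H_n}$, and $\operatorname{ad}(a)$ is nilpotent (indeed its square is zero) while $d$ is semisimple with nonzero eigenvalues $\pm\I\lambda_i$ on the complement of the center. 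So the elements of $H_n^\lambda$ whose adjoint acts \emph{non-nilpotently} on $H_n$ are exactly those of the form $\gamma u + a$ with $\gamma\neq 0$.

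Next I would exploit the grading. Decompose $H_n^\lambda=\bigoplus_{g}(H_n^\lambda)_g$. The quotient $H_n^\lambda/H_n$ is one-dimensional, so there is a unique $g_1$ in the support with $(H_n^\lambda)_{g_1}\not\subset H_n$, and then $H_n^\lambda=(H_n^\lambda)_{g_1}\oplus H_n$ with $\dim (H_n^\lambda)_{g_1}= 1 + \dim\big((H_n)_{g_1}\big)$. Pick any homogeneous $u'\in (H_n^\lambda)_{g_1}\setminus H_n$: by the previous paragraph $\operatorname{ad}(u')|_{H_n}$ is non-nilpotent, and in fact $u'=\gamma u + a$ forces, after rescaling $u'$ by $1/\gamma$, that $\operatorname{ad}(u')|_{H_n}=d+\operatorname{ad}(\gamma^{-1}a)|_{H_n}$. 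I would now argue that this perturbed derivation is conjugate, by an \emph{inner} automorphism $\exp(\operatorname{ad}(x_0))$ of $H_n$ with $x_0\in H_n$ (which exists since $\operatorname{ad}(x_0)^2=0$), to $d$ itself: concretely, $\exp(\operatorname{ad}(x_0))(d+\operatorname{ad}(b))\exp(-\operatorname{ad}(x_0)) = d + \operatorname{ad}(b) + \operatorname{ad}(d(x_0)) + (\text{higher terms that vanish})$, and since $d$ is invertible modulo the center on $P=H_n/\mathcal Z(H_n)$ one can solve $d(x_0)\equiv -b \pmod{\mathcal Z(H_n)}$ for $x_0$. The resulting automorphism of $H_n$ extends to one of $H_n^\lambda$ fixing $u'$, so \emph{without loss of generality} $\operatorname{ad}(u')|_{H_n}=d$, i.e.\ $u'$ acts exactly as the original $u$.

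With $u'$ in hand, $H_n$ decomposes under $\operatorname{ad}(u')$ into the center $\mathbb{F}z$ and the $\pm\I\lambda_i$-eigenspaces; on the symplectic space $P=H_n/\mathbb F z$ the form pairs the $\I\lambda_i$-eigenspace with the $-\I\lambda_i$-eigenspace (all other pairings vanish because $\langle \operatorname{ad}(u')v, w\rangle + \langle v,\operatorname{ad}(u')w\rangle = 0$), so the hypothesis of Lemma~\ref{avant} is met by the eigenspace decomposition, \emph{and} this decomposition is compatible with the $G$-grading since each eigenspace is a sum of homogeneous components. Applying Lemma~\ref{avant} (or the refinement producing a homogeneous symplectic basis adapted to the eigenspace splitting) yields homogeneous $u'_i,v'_i\in H_n$ that are $\operatorname{ad}(u')$-eigenvectors with eigenvalues $\lambda_i,-\lambda_i$ (after the cosmetic rescaling used already in Equation~(\ref{eq_labasebuenadelatwisted})) and satisfy $[u'_i,v'_i]=-2\lambda_i z$ with all remaining brackets zero; together with $z$ and $u'$ these form the required basis.

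The main obstacle I expect is the conjugation step: showing that one may arrange $\operatorname{ad}(u')|_{H_n}=d$ on the nose rather than merely $d$ plus a nilpotent perturbation. The point to be careful about is that the correcting element $x_0$ must lie in $H_n$ (so that $\exp(\operatorname{ad}(x_0))$ is a genuine automorphism of $H_n^\lambda$ extending one of $H_n$), that the series $\exp(\operatorname{ad}(x_0))$ terminates (which it does, $\operatorname{ad}(x_0)^2=0$ on $H_n$, and on $H_n^\lambda$ one only needs to track finitely many terms), and that solving $d(x_0)\equiv -b$ modulo the center is possible — which holds precisely because $d$ is invertible on $P$, equivalently because all $\lambda_i\neq 0$. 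Everything else is the bookkeeping already carried out in the proofs of Theorem~\ref{topa} and in Section~\ref{Sec_superHei}.
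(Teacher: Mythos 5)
Your opening moves coincide with the paper's: $H_n=[L,L]$ and $\langle z\rangle=\mathcal{Z}(L)$ are graded, so one picks a homogeneous $u'$ outside $H_n$ with $u'-u\in H_n$ (after scaling). But from there the paper finishes in two lines: writing $u'=u+\alpha z+\sum_i\alpha_iu_i+\sum_i\beta_iv_i$, it simply sets $u_i'=u_i+2\beta_iz$ and $v_i'=v_i+2\alpha_iz$, which works because the perturbation $\ad(u'-u)$ only contributes central terms; these central shifts are exactly the eigenvectors your conjugation is designed to produce. Your conjugation step can be made rigorous (solve $d(x_0)\equiv -b$ modulo $\mathbb{F}z$, possible since all $\lambda_i\neq0$), but the sentence you use to close it is wrong: the automorphism $\exp(\ad(x_0))$ of $H_n$ does \emph{not} extend to an automorphism of $H_n^\lambda$ fixing $u'$. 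Such an extension would force $\exp(\ad(x_0))$ to commute with $\ad(u')\vert_{H_n}$, which happens only when $d(x_0)\in\mathbb{F}z$, i.e.\ $x_0\in\mathbb{F}z$ — precisely not the $x_0$ you need. The correct move is to transport the standard basis by the conjugating automorphism (set $u_i':=\exp(\ad(x_0))(u_i)$, $v_i':=\exp(\ad(x_0))(v_i)$, noting it fixes $z$), not to extend it while fixing $u'$.

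The genuine gap is in your last paragraph: the eigenspaces of $\ad(u')$ are \emph{not} sums of homogeneous components, so Lemma~\ref{avant} cannot be applied ``compatibly with the grading,'' and the homogeneous eigenvectors $u_i',v_i'$ you claim do not exist in general. If $u'$ has degree $h\neq0$ (the case of real interest later in the paper), then $\ad(u')(L_g)\subset L_{g+h}$, so a homogeneous eigenvector with nonzero eigenvalue would lie in $L_g\cap L_{g+h}=0$; the grading $\Gamma_1$ of Subsection~6.3 already exhibits this, since there $\deg u$ has order two and the homogeneous components $\langle e_i\rangle$, $\langle\hat e_i\rangle$ are not $\ad(u)$-eigenspaces. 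So the strengthened statement you argue for is false. Fortunately the lemma only requires $u'$ (not the $u_i',v_i'$) to be homogeneous, so deleting every homogeneity claim about $u_i',v_i'$ — applying Lemma~\ref{avant} to the bare eigenspace decomposition of $P$, or better, using the paper's explicit central shifts — leaves a correct, if much heavier, argument. (A small normalization slip: with the basis of Equation~(\ref{eq_labasebuenadelatwisted}) the eigenvalues of $d$ on $H_n$ are $\pm\lambda_i$, not $\pm\I\lambda_i$.)
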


\begin{proof}
Let $\Gamma \colon L = \oplus_{g \in G}L_g$ be a group grading
on $L = H_n^\lambda$.  As any automorphism leaves invariant $[L,L]
= H_n$ and $\mathcal{Z}(L) = \langle z\rangle$, this implies that
$z$ is homogeneous   and $H_n$ is a graded subspace (the
homogeneous components of any element in $H_n$ are again elements
in $H_n$). 

Since not every homogeneous element is contained in $H_n$ there must be someone of the form $a=\lambda u+h$ with $\lambda$ a nonzero scalar and
$h\in H_n$. Then $u':=\lambda^{-1}a$ is a  homogeneous element  and $u'-u\in H_n$. Hence $u' = u + \alpha z + \sum_{i=1}^k
\alpha_i u_i + \sum_{i=1}^k \beta_i v_i$ for some choice of scalars
$\alpha,\alpha_i,\beta_i\in\mathbb{F}$. If we take $u_i' = u_i + 2\beta_i
z$ and $v_i' = v_i + 2\alpha_i z$, then the basis $\{z,u',u'_1,v'_1,\dots,u'_k,v'_k\}$
trivially satisfies the required conditions.
\end{proof}

Consequently, fixed $\Gamma \colon L = \oplus_{g \in G}L_g$   a group grading
on $L = H_n^\lambda$, there is not loss of
generality in supposing  that $u$ is homogeneous.
Let us denote by $h \in G$ the degree of $u$ in $\Gamma$. Our next
aim is to show  that  $h$ is necessarily of finite order. From now
on we are going to  denote by $\varphi$ be the inner derivation
$$
\varphi:= \ad(u): H_n^\lambda \to H_n^\lambda,\qquad  x \mapsto
[u,x],
$$
 which is going to be a key tool in the study of  the group
gradings on $H_n^\lambda$. If $0\ne x\in [u,L]$ is a homogeneous
element, then there is $g\in G$ such that $x=\sum_i(c_iu_i+d_iv_i)
\in L_g$ for some scalars $c_i,d_i\in \mathbb{F}$, so that $
\varphi^t(x) = \sum_i(c_iu_i+(-1)^td_iv_i)\lambda_i^t \in
L_{g+th}$ is not zero for all $t \in \mathbb{N}$. Taking into account that at most
there are $2k$ independent elements in the left set
$$
\{\sum_{i=1}^k(c_iu_i+(-1)^td_iv_i)\lambda_i^t : t = 0,1,2,\dots\}
\subset \langle \{u_1,v_1,\dots,u_k,v_k\}\rangle,
$$
hence there is a positive integer $r\le 2k$ with $\varphi^r(x) \in \langle
\{\varphi^t(x)\mid 0\le t<r\}\rangle$.
Since $\varphi^r(x) \in L_{g+rh}
\cap (\sum_{t<r}L_{g+th})$, necessarily there exists  $t < r$ such that $g + rh = g + th$,
 so that $(r-t)h = 0$, as we wanted to show.

Let us denote by $l$ the order of $h$ in $G$.
 Recall that
the set of eigenvalues of $\varphi$ is
$\{\lambda_1,-\lambda_1,\dots,\lambda_k,-\lambda_k,0,0\}$ with
respective eigenvectors $\{u_1,v_1,\dots,u_k,v_k,u,z\}$
(recall Equation~(\ref{usyuves})), so that the set of eigenvalues of
$\varphi\vert_{[u,L]}$ is
$$
\{\lambda_1,-\lambda_1,\dots,\lambda_k,-\lambda_k\}=\Spec(\ad(u)\vert_{[u,L]})
=: \Spec(u).
$$
 Fix some $\lambda_i \in \Spec(u)$, consider the eigenspace of $\varphi$ given by
 $V_{\lambda_i}:=\{x \in L: \varphi (x) =\lambda_i  x\}$ and
   denote by
$$
V_{\lambda_i}^l:=\{x \in L: \varphi^l(x) =\lambda_i^l x\}.
$$
It is obviously nonzero, because $u_i\in V_{\lambda_i} \subset
V_{\lambda_i}^l$. Moreover, as $V_{\lambda_i}^l$ is invariant
under $\varphi$, we have that $\varphi\vert_{V_{\lambda_i}^l}$ is
diagonalizable and
\begin{equation}\label{ccoo}
V_{\lambda_i}^l=\bigoplus\limits_{j=0}^{l-1}V_{\xi^j \lambda_i}
\end{equation}
for $\xi$ a fixed primitive $l$th root of the unit. Note that if $x\in
V_{\lambda_i}^l$, then     $
 \sum_{q=0}^{l-1} (\xi^{ -j}
\lambda_i^{-1})^q\varphi^q(x)\in V_{\xi^j \lambda_i}  $ for any $j=0,\dots, l-1$.

Recall that if $f\in\mathop{\hbox{\rm End}}(L)$ satisfies
$f(L_g)\subset L_g$ for all $g\in G$, then for each $\alpha\in
\mathbb{F}$, the eigenspace $\{x\in L: f(x)=\alpha x\}$ is
graded. This can be applied to
the endomorphism $ \varphi^l$, since $\varphi^l(L_g)\subset L_{g+lh}=L_g$,
so that  the eigenspace $V_{\lambda_i}^l$ {is a graded subspace} of $L$. Thus we
can take $0\ne x\in V_{\lambda_i}^l\cap L_g$ for some $g\in G$.
For each $j=0,\dots, l-1$, the element   $   \sum_{q=0}^{l-1} (\xi^{l-j}
\lambda_i^{-1})^q\varphi^q(x)\in \sum_{q=0}^{l-1}  L_{g+qh}$ must be nonzero,
because the involved homogeneous pieces are different
($g+qh=g+ph$ implies $(q-p)h=0$) and the projection in the component $L_g$ is $x\ne0$.
Consequently $V_{\xi^j \lambda_i}\ne0$ for all $j$, so that
\begin{equation}\label{dos}
\{\xi^j\lambda_i: j=0,...,l-1\} \subset \Spec(u)
\end{equation}
for any $\lambda_i \in \Spec(u)$, and hence
$\Spec(u)=\{\pm\xi^j\lambda_i: 0\le j<l,1\le i\le k\}$.

\begin{pr}\label{pr_loscasosqueestansiempre}
Assume that $\lambda_i/\lambda_j$ is not a   root of unit   if $i\ne j$.
Then the unique fine (group) gradings on $H_n^\lambda$ (up to
equivalence) are $\Gamma_1$ and $\Gamma_2$. Moreover, the Weyl
groups of these fine gradings are $\mathcal{W}(\Gamma_1)\cong
\mathbb{Z}_2^k$ and $\mathcal{W}(\Gamma_2)\cong\mathbb{Z}_2$.
\end{pr}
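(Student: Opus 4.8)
The plan is to first classify the fine gradings, then compute the two Weyl groups separately. For the classification, I would start from an arbitrary group grading $\Gamma\colon L=\oplus_{g\in G}L_g$ on $L=H_n^\lambda$ and invoke Lemma~\ref{le_uhomogneo} to assume $u$ is homogeneous of degree $h$, with $l$ the (finite) order of $h$ established in the discussion preceding the proposition. The key point is the hypothesis: since $\lambda_i/\lambda_j$ is not a root of unity for $i\ne j$, the orbit $\{\pm\xi^j\lambda_i: 0\le j<l\}$ for a fixed $i$ must be disjoint from the one for $j\ne i$ (the ratios would be roots of unity otherwise), so $\Spec(u)$ is partitioned into $k$ such orbits, forcing $kl\le k$, i.e.\ $l=1$, or $l=2$ with some degeneration. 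More carefully: if $l\ge 3$ then $\xi\lambda_i\in\Spec(u)$ with $\xi\lambda_i\notin\{\pm\lambda_i\}$, and $\xi\lambda_i=\pm\lambda_{i'}$ for some $i'\ne i$ would make $\lambda_{i'}/\lambda_i$ a root of unity, contradiction; hence $l\in\{1,2\}$. In the case $l=1$, $h=0$, so $u\in L_0$ and $\varphi=\ad u$ preserves each $L_g$; since $\varphi$ is diagonalizable with the $u_i,v_i$ as eigenvectors for distinct eigenvalues $\pm\lambda_i$ (using again that the $\lambda_i$ are pairwise distinct, indeed their ratios are not roots of unity), each eigenline $\langle u_i\rangle$, $\langle v_i\rangle$ is graded, each $L_g$ is a sum of these lines together with possibly $\langle u\rangle,\langle z\rangle$, and refining we land in $\Gamma_2$. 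In the case $l=2$, $\varphi^2$ preserves each $L_g$; the $+\lambda_i$ and $-\lambda_i$ eigenspaces get swapped by the degree-$h$ shift, so $u_i$ and $v_i$ lie in components differing by $h$; tracking degrees exactly as in the computation preceding $\Gamma_1$ (where $\deg u=h$ has order $2$ and $2\deg e_i=\deg z+\deg u$) and refining gives $\Gamma_1$. So any fine grading is equivalent to $\Gamma_1$ or $\Gamma_2$, and these two are inequivalent because $\Gamma_2$ is toral while $\Gamma_1$ is not (its zero component is zero, by the proposition in Subsection~\ref{subsec_casofaciltwisted}).

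For the Weyl group of $\Gamma_1$, I would describe $\aut(\Gamma_1)$ concretely. An automorphism permuting the components must fix $\langle z\rangle$, $\langle u\rangle$ (the unique components determined intrinsically: $z$ spans the center, $u$ is characterized up to the homogeneous ambiguity already removed), and must permute the pairs $\{\langle e_i\rangle,\langle\hat e_i\rangle\}$; but the relation $[u,e_i]=\lambda_i\hat e_i$ and $[u,\hat e_i]=\lambda_i e_i$ forces, after rescaling, that $f$ either fixes or swaps $\langle e_i\rangle$ and $\langle\hat e_i\rangle$ for each $i$, with no permutation of the index set $i$ allowed in general (the $\lambda_i$ being pairwise distinct prevents sending the $i$-th pair to the $j$-th). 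The swap $e_i\leftrightarrow\hat e_i$ can be realized by an automorphism (one checks it respects the brackets, possibly after adjusting the action on $z$), giving an element $\nu_i$ of order $2$ in the Weyl group; these commute and are independent on the components, so $\mathcal W(\Gamma_1)\cong\mathbb Z_2^k$. For $\Gamma_2$, the components are $\langle u\rangle$, $\langle z\rangle$, and the lines $\langle u_i\rangle,\langle v_i\rangle$; an automorphism in $\aut(\Gamma_2)$ fixes $\langle u\rangle$ (center of $[L,L]$-complement / eigenvalue bookkeeping) and $\langle z\rangle$, and again the relation $[u,u_i]=\lambda_iu_i$, $[u,v_i]=-\lambda_iv_i$ pins each $u_i$ to $\pm\lambda_i$-eigenspace of $\ad u$; since the $\lambda_i$ are pairwise distinct the index $i$ cannot be permuted, but there is one global symmetry $u\mapsto -u$ (equivalently swapping all $u_i\leftrightarrow v_i$ simultaneously), which is realized by an automorphism and has order $2$, whence $\mathcal W(\Gamma_2)\cong\mathbb Z_2$.

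The main obstacle I anticipate is the case analysis $l=2$ in the classification: one has to argue that the shift-by-$h$ map genuinely forces the $\Gamma_1$ degree pattern and that no intermediate possibility (e.g.\ some but not all $\lambda_i$ participating in length-$2$ orbits) survives — but here the hypothesis that \emph{all} ratios $\lambda_i/\lambda_j$ fail to be roots of unity does the work, since $-1$ is a root of unity, so $\xi^j\lambda_i=-\lambda_i$ is the only allowed coincidence within distinct signs and there is no room to mix different indices. A secondary technical point is verifying that the candidate generators ($\nu_i$ for $\Gamma_1$; the global flip for $\Gamma_2$) are actually automorphisms and that nothing else appears in $\aut(\Gamma)/\stab(\Gamma)$; this is routine given the rigidity of $\Spec(u)$ forced by the hypothesis, and parallels the arguments of Proposition~\ref{pr_weylfina}. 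One should also double-check that $\Gamma_1$ and $\Gamma_2$ are not refined by any finer grading, i.e.\ they really are fine: this follows since each has all homogeneous components one-dimensional, so no proper refinement is possible.
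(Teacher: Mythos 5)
Your overall route is the paper's: make $u$ homogeneous via Lemma~\ref{le_uhomogneo}, let $l$ be the (finite) order of its degree $h$, use the hypothesis to force $l\in\{1,2\}$, and then treat the two cases; your $l=1$ case, the fineness remark and the inequivalence of $\Gamma_1$ and $\Gamma_2$ are fine. The genuine gap is in the case $l=2$. Your key assertion that ``$u_i$ and $v_i$ lie in components differing by $h$'' is false: if $u_i$ were homogeneous, comparing degrees in $[u,u_i]=\lambda_iu_i$ would give $h+\deg u_i=\deg u_i$, i.e.\ $h=0$; and in $\Gamma_1$ itself $u_i=e_i+\hat{e}_i$ is not homogeneous. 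The point is that for $h\neq0$ the map $\varphi=\ad(u)$ does not preserve the grading (only $\varphi^2$ does), so its eigenlines $\langle u_i\rangle$, $\langle v_i\rangle$ need not be graded. What is needed instead (and is what the paper does) is: the subspace $V_{\lambda_1}^2=\{x\in L:\varphi^2(x)=\lambda_1^2x\}$ is graded; pick a homogeneous $0\neq x\in V_{\lambda_1}^2\cap L_g$, set $y=\varphi(x)/\lambda_1$, note that $x,y$ are independent because $0\neq\varphi(x)\in L_{g+h}$ and $h\neq0$; and then prove $[x,y]\neq0$ --- this is precisely where the hypothesis enters in this case, since $\lambda_i^2\neq\lambda_j^2$ for $i\neq j$ gives $\dim V_{\lambda_1}^2=2$, which together with $[V_\alpha,V_\beta]=0$ for $\alpha+\beta\neq0$ forces $[x,\varphi(x)]\neq0$. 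Without that step the conclusion cannot be reached by ``tracking degrees'': when some $\lambda_i^2$ coincide there do exist homogeneous $x$ with $[x,\varphi(x)]=0$, and they produce fine gradings different from $\Gamma_1$ (blocks of type I with $l=2$, cf.\ Theorem~\ref{torcidas_lasunicasgradings}), so an argument that never confronts this dichotomy is incomplete. After $[x,y]\neq0$ one rescales (square roots exist, $\mathbb{F}$ being algebraically closed) to get $[x,y]=\lambda_1z$, splits $[u,L]=W\oplus\mathcal{Z}_{[u,L]}(W)$ with $W=\langle x,y\rangle$ graded and $\varphi$-invariant, and iterates; this yields a homogeneous basis satisfying the $\Gamma_1$ relations.

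A smaller but real defect lies in both Weyl-group computations: ``the $\lambda_i$ being pairwise distinct prevents sending the $i$-th pair to the $j$-th'' is not a valid justification. Distinctness alone does not exclude permutations, because an automorphism may rescale $u$: if $f(u)=\beta u$, $f(e_i)=\gamma_ie_{\sigma(i)}$, $f(\hat{e}_i)=\gamma'_i\hat{e}_{\sigma(i)}$, the bracket relations only give $\lambda_{\sigma(i)}=\pm\beta^{\mp1}\lambda_i$; one must multiply these relations over all $i$ (or over a $\sigma$-cycle) to conclude $\beta^{2k}=1$, so that $\lambda_{\sigma(i)}/\lambda_i$ is a root of unity, and only then does the hypothesis force $\sigma=\id$. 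That this extra step is not cosmetic is shown by the final subsection of the paper: when ratios of the $\lambda_i$ are roots of unity, nontrivial permutations do occur and the Weyl groups of $\Gamma_1$ and $\Gamma_2$ become strictly larger. The same correction applies to $\Gamma_2$, where your generator ($u\mapsto-u$, swapping $u_i\leftrightarrow v_i$ up to scalars) and the conclusion $\mathcal{W}(\Gamma_2)\cong\mathbb{Z}_2$ otherwise agree with the paper. With these two repairs your outline coincides with the paper's proof.
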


\begin{proof}
Let $\Gamma \colon L = \oplus_{g \in G}L_g$ be a grading on $L =
H_n^\lambda$. By the above discussion we can suppose that $u$ is
homogeneous with degree $h \in G$ of finite order $l$. Let us show
that either $l=1$ or $l=2$. Otherwise, take $\xi$ a primitive
$l$th root of unit. As $\xi\lambda_1\ne\pm\lambda_1$ belongs to $\Spec(u)$ according to
Equation~(\ref{dos}), then there is $1\ne i\le k$ such that
$\xi\lambda_1\in\{\pm\lambda_i\}$ and hence either
$\left(\frac{\lambda_i}{\lambda_1}\right)^l=1$ or
$\left(\frac{\lambda_i}{\lambda_1}\right)^{2l}=1$, what is a
contradiction.
%
%
Hence, we can distinguish two cases.

First consider   $h = 0\in G$. Thus  {$\varphi(L_g) \subset L_g$
for any $g$}.  Restrict $\varphi\colon [u,L]\to [u,L]$. We can
take a basis of homogeneous elements which are eigenvectors for
$\varphi$. Recall that the spectrum of $\varphi\vert_{[u,L]}$
consists of $\{\pm \lambda_1,\dots,\pm\lambda_k\}$.
Take $x_1\ne0$ some homogeneous element in $V_{\lambda_1}$. As
$[x_1,V_{-\lambda_1}] \ne 0$, there is some element $y_1 \in
V_{-\lambda_1}$ in the above basis such that $[x_1,y_1]=
-2\lambda_1 z$ (by scaling $y_1$ if necessary). Now $[u,L] = W \oplus {\mathcal Z}_{[u,L]}(W)$ for
$W := \langle x_1,y_1\rangle $, where $W$ as well as its
centralizer  ${\mathcal Z}_{[u,L]}(W)$ are  {graded} and
$\varphi$-invariant. We continue by induction until finding a
basis of homogeneous elements $\{x_1,y_1,\dots,x_k,y_k\}$ of
$[u,L]$ such that $[x_i,y_i] = -2\lambda_i z, [u,x_i] =
\varphi(x_i) = \lambda_i x_i$ and $[u,y_i] = \varphi(y_i) =
-\lambda_i y_i$. Since $$L= \langle z \rangle  \oplus \langle u
\rangle  \oplus [u,L],$$ we have
 that the map $u \mapsto u, z \mapsto z, x_i \mapsto u_i$ and
$y_i \mapsto v_i$ is a Lie algebra isomorphism which applies
$\Gamma$ into a coarsening of $\Gamma_2$. 

Second consider the case when  {$2h = 0$ but $h \ne 0$}. Thus
$\varphi^2$ preserves the grading $\Gamma$ and it is
diagonalizable with  eigenvalues
$\{\lambda_1^2,\lambda_1^2,\dots,\lambda_k^2,\lambda_k^2,0,0\}$
(counting each   one         with multiplicity $1$). Observe that
 $\varphi$
applies
$\{x \in L  : \varphi^2(x) = \lambda_i^2 x\} = V_{\lambda_i}^2$ into
itself. Moreover this set is graded for each $i$, because $\varphi^2$
preserves the grading. For any $0 \ne x_1 \in V_{\lambda_1}^2 \cap
L_g$ a homogeneous element of the grading, $\varphi(x_1)$ is
independent with $x_1$ (otherwise $\varphi(x_1) \in L_g \cap
L_{g+h}$ but $h \ne 0$ and $\varphi(x_1) \ne 0$). Take $y_1 =
\frac1{\lambda_1}\varphi(x_1)$, which verifies $\varphi(y_1) =
\lambda_1x_1$. Since our ground field is algebraically closed, it contains the square roots of all its elements,
so that if
$[x_1,y_1] \ne 0$, we can scale to get $[x_1,y_1] = \lambda_1 z$,
 and
then we can continue because, as before, $[u,L] = W \oplus
{\mathcal Z}_{[u,L]}(W)$ for $W := \langle x_1,y_1\rangle $, where
both $W$ and its centralizer   are graded and $\varphi$-invariant.
The case $[x_1,y_1] = 0$ does not occur under the hypothesis of
the theorem, since $\lambda_i^2 \ne \lambda_j^2$ if $i\ne j$,
so that $\dim V_{\lambda_i}^2=2$ for all $i$. As
$[V_\alpha,V_\beta] = 0$ if $\alpha + \beta \ne 0$, this implies
that there is $y \in V_{\lambda_1}^2$ with $[x_1,y] \ne 0$ but
$V_{\lambda_1}^2 = \langle x_1,y_1\rangle$. To summarize, if $h$
has order $2$ and $\lambda_i/\lambda_j \notin \{\pm1\}$, we find a
basis of homogeneous elements $\{x_1,y_1,\dots,x_k,y_k\}$ of
$[u,L]$ such that $[x_i,y_i] = \lambda_i z, [u,x_i] = \lambda_i
y_i$ and $[u,y_i]=\lambda_i x_i$, so that the map $u \mapsto u, z
\mapsto z, x_i \mapsto e_i$ and $y_i \mapsto \hat{e}_i$ is a Lie
algebra isomorphism which applies $\Gamma$ into a coarsening of
$\Gamma_1$.

In order to compute the Weyl groups of these fine gradings, recall
that any $f\in\aut(L)$ verifies $0\ne f(z)\in\span{z}$.
If besides $f\in\aut(\Gamma_1) $, then $f(u)\in\span{u}$.
Otherwise, there would exist some $i\le k$ such that either
$f(e_i)\in \langle u \rangle$ or $f(\hat{e}_i)\in \langle u
\rangle$, so that $0\ne f(\lambda_iz)=[f(
{e}_i),f(\hat{e}_i)]\in[u,L]\cap\span{z}=0$. Consider   for each
index $i\le k$, the element in $\aut(\Gamma_1)$ defined by
$\mu_i(e_i) =\I \hat{e}_i, \mu_i(\hat{e}_i) =\I e_i, \mu_i(e_j) =
e_j, \mu_i(\hat{e}_j) = \hat{e}_j$  for each $j \ne i$,  $\mu_i(z)
=  z$ and $\mu_i(u) = u$. Note that  if $r=1,...,k$,  {there are
not} any $i,j \leq k$ such
 that $f(e_i)\in \langle
e_r\rangle$ and $f({e}_j)\in \langle \hat{e}_r \rangle$. Hence we
can compose $f$ with some $\mu_i$'s if necessary to obtain that
$f':=\mu_{i_1} \cdots \mu_{i_s} f$ satisfies
$$
f'(e_i) \in \langle {e}_1\rangle \cup \langle {e}_2\rangle \cup
\cdots \cup \langle {e}_k\rangle
$$
for each $i=1,...,k$. Thus, there is $\sigma \in S_k$  such that
   $f'(z)=\mu z$, $f'(u)=\beta u$,   $f'(e_i)=\gamma_i e_{\sigma(i)}$ and $f'(\hat e_i)=\gamma'_i \hat e_{\sigma(i)}$ for any $i=1,...,k$,
 with $\mu, \beta, \gamma_i,\gamma'_i \in {\mathbb
 F}^{\times}$.
 From here, the equality  $f'([u,e_i])=[f'(u),f'(e_i)]$ implies
 \begin{equation}\label{mari1}
\gamma'_i\lambda_i=\beta\gamma_i\lambda_{\sigma(i)},
\end{equation}
  and finally the condition
 $f'([u, \hat{e}_i])=[f'(u),
 f'(\hat{e}_i)]$ allows us to assert
\begin{equation}\label{mari2}
\gamma_i\lambda_i=\beta\gamma'_i\lambda_{\sigma(i)}.
 \end{equation}
 From Equations~(\ref{mari1}) and  (\ref{mari2}) we easily get
$\lambda_{\sigma(i)} \in \pm
 \beta^{-1} \lambda_i$ for any $i=1,...,k$.
 By multiplying, $\Pi_{i=1}^k\lambda_{\sigma(i)}\in\pm\beta^{-k}\Pi_{i=1}^k\lambda_{i}$ so that $\beta^{2k}=1$.
As $\lambda_{\sigma(i)}/\lambda_i$ is not a root of unit if
$\sigma(i)\ne i$, we conclude that $\sigma=\id$, so that
$f'\in\stab(\Gamma_1)$. In other words, since $\mu_i\mu_j=\mu_j\mu_i$,
$$
\mathcal{W}(\Gamma_1)=\{[\mu_{i_1} \dots \mu_{i_s}]: \,1\le
i_1\le\dots\le i_s\le k\}\cong \mathbb{Z}_2^k,
$$
where $[\ ]$ is used for denoting the class of an element of $\aut(\Gamma_1)$ modulo $\stab(\Gamma_1)$.

For the other case, define the automorphism $\mu\in
\aut(\Gamma_2)$ by means of $\mu(u_i) =\I v_i$ and $\mu(v_i) =\I
u_i$ for all $i$,   $\mu(z) =  z$ and $\mu(u) = -u$. Consider
$f\in\aut(\Gamma_2) $, and note that again there is
$\beta\in\mathbb{F}^\times$ such that $f(u)=\beta u $. If $f(u_i)$
is a multiple of either $u_j$ or $v_j$ for some $j$, this clearly
implies that $f(v_i)$ also is, so that there is $\sigma\in S_k$
such that $f(u_i)\in\span{u_{\sigma(i)}}\cup\span{v_{\sigma(i)}}$
for all $i\le k$. As $\beta[u,f(u_i)]=\lambda_if(u_i)$, then
$\lambda_i\in\{\pm\beta\lambda_{\sigma(i)}\}$. Multiplying as in the above case, we get that
$\beta$ is a root of unit, and again we conclude that $\sigma=\id$. By
composing with $\mu$ if necessary, we can assume that
$f(u_1)\in\span{u_1}$, which implies $\beta=1$. If
$f(u_i)\in\span{v_i}$ for some $i$, then $\beta=-1$, which is a
contradiction, so that $f(u_i)\in\span{u_i}$ for all $i$ and $f$
belongs to $\stab(\Gamma_2)$. We have then proved that
$$
\mathcal{W}(\Gamma_2)=\span{[\mu]}\cong \mathbb{Z}_2.
$$
\end{proof}

\subsection{Fine gradings on twisted Heisenberg algebras}

In the general case (possible roots of the unit among the
fractions of $\lambda_i$'s), the situation is much more
{involved}.
 On one hand, a lot of different fine gradings arise, and on the other hand even the Weyl groups of $\Gamma_1$ and $\Gamma_2$ change.  {There is} a lot of symmetry    in the related  twisted Lie algebra, and its fine gradings are also symmetric.
In order to figure out what is happening, we previously  need to
show a couple of key examples.

First,   for $\xi$ a primitive $l$th root of the unit and $\alpha$
a nonzero scalar, we consider the twisted Heisenberg algebra
$H_{2l+2}^\lambda$ corresponding to
$$\lambda=(\lambda_1,\dots,\lambda_l) = (  \xi\alpha, \xi^2\alpha,
\dots, \xi^{l-1}\alpha,\alpha).$$  Thus $[u,u_i] = \xi^i\alpha
u_i, [u,v_i] = -\xi^i\alpha v_i$ and $[u_i,v_i] = -2\xi^i\alpha z$
for $i=1,\dots,l,$ with the definition of $u_i$'s and $v_i$'s as
in Equation~(\ref{usyuves}). Take now
\begin{equation}\label{bloqueprim}
\begin{array}{l}
x_j = \sum_{i=1}^{l}\xi^{ji}u_i,\\
y_j = -\frac1{2l} \sum_{i=1}^{l}(-1)^{j}\xi^{(j-1)i}v_i,
\end{array}
\end{equation}
if $j=1,...,l$. These elements verify $[u,x_j]=\alpha x_{j+1}$ and
$[u,y_j]=\alpha y_{j+1}$ for all $j\le l-1$. Besides $[x_i,y_j] =
{(-1)^{j}}\frac\alpha{l} (\sum_{r=1}^{l} \xi^{r(i+j)})z$ is not
zero if and only if $i+j= l,2l$, and in such a case
$[x_l,y_l]=(-1)^l\alpha z$ and $[x_{i}, y_{l-i}] =(-1)^{l-i}\alpha
z $ for $i=1,...,l-1$. Note that obviously
$\{x_1,y_1\dots,x_{l},y_{l}\}$ is a family of independent vectors
such that $[x_i,x_j]=0=[y_i,y_j]$ for all $i,j$.


Therefore we have a fine grading on $L=H_{2l+2}^\lambda$ over the group
  $$
  G= {\mathbb{Z}}^2 \times  {\mathbb{Z}}_l,
  $$
   given by
\begin{equation}\label{gradtipoI}
\begin{array}{l}
L_{(0,0,\bar1)}=\langle u \rangle,\\
L_{(1,1,\bar0)}=\langle z \rangle,\\
L_{(1,0,\overline{i})}=\langle x_i \rangle,\\
L_{(0,1,\overline{i})}=\langle y_i \rangle,
\end{array}
\end{equation}
for all $i=1,...,l$.

Take $\gamma\in\mathbb{F}$ such that $\gamma^l=(-1)^l$, and
consider the automorphisms $\theta,\vartheta\in\aut(L)$ defined by
\begin{equation}\label{eq_WeylbloquetipoI}
\begin{array}{llll}
\theta(x_i)=x_{i+1},&\theta(y_i)=y_{i-1},&\theta(z)=-z,&\theta(u)=u;\\
\vartheta(x_i)=\gamma^i y_{i},&\vartheta(y_i)=-\gamma^i
x_{i},&\vartheta(z)=z,&\vartheta(u)=\gamma u;
\end{array}
\end{equation}
where the indices are taken modulo $2l$. It is not difficult to
check that the Weyl group of the grading described in
Equation~(\ref{gradtipoI}) is generated by the classes $[\theta]$
and $[\vartheta]$, elements of order $l$ and $2$ respectively
which do not commute, so that the Weyl group is isomorphic to the Dihedral group
$D_l$.

This example motivates the following definition.

\begin{de}
Let $L$ be any Lie algebra, $z \in L$ a fixed element, $u$ an
arbitrary element and $\alpha \in \mathbb{F}^\times$. A set
$B_l^{\textrm{I}}(u,\alpha)$, which will be referred as a block of
type $\textrm{I}$, is given by a family of $2l$ independent
elements in $L$,
$$
B_l^{\textrm{I}}(u,\alpha)=\{x_1,y_1,\dots,x_{l},y_{l}\},
$$
satisfying that the only non-vanishing products among them are the
following:

$$
\begin{array}{ll}
{[u,x_i]} =   \alpha x_{i+1}&\forall i=1,\dots,l-1,\\
{[u,x_{l}]}  =  \alpha x_{1},&\\
{[u,y_i]}  = \alpha y_{i+1}&\forall i=1,\dots,l-1,\\
{[u,y_{l}]}  = (-1)^l\alpha y_{1},&\\
{[x_i,y_{l-i}]}  = (-1)^{l-i}\alpha z&\forall
i=1,\dots,l-1,\\
{[x_l,y_{l}]}  = (-1)^{l}\alpha z.&
\end{array}
$$
\end{de}

\medskip

As a second example, fix   $\zeta$  a primitive $2l$th root of the
unit    and $\alpha$ a nonzero scalar. Consider now the twisted
Heisenberg algebra $H_{2l+2}^\lambda$ corresponding to
$$
\lambda=(\lambda_1,\dots,\lambda_l) = (
\zeta\alpha,\zeta^2\alpha,\dots,\zeta^{l-1}\alpha,-\alpha).
$$
Again $[u,u_i] = \zeta^{i}\alpha u_i, [u,v_i] = -\zeta^{i}\alpha v_i$
and $[u_i,v_i] = -2\zeta^{i}\alpha z$ for $i = 1,\dots,l$.   Take
now
\begin{equation}\label{bloqueseg}
x_j = \frac{\I}{2\sqrt{l}} \sum_{i=1}^{l}(u_i+(-1)^{j-1}
v_i)\zeta^{(j-1)i}
\end{equation}
 for each integer
$j$. Observe that $\{x_1,\dots,x_{2l}\}$ is a family of linearly
independent elements satisfying $ [u, x_{j}]=\alpha x_{j+1}$ for
any $j=1,...,2l-1$ and $[u,x_{2l}]=\alpha x_1$. A direct
computation gives
\begin{equation}\label{cuenta}
[x_i,x_j]=\frac1{2l} \alpha ((-1)^i+(-1)^{j-1})(
\sum_{k=1}^{l}\zeta^{(i+j-1)k}  )z
\end{equation}
for any $i$ and $j$. If $i+j-1=2l$, then $i$ and $j-1$ are either
both odd or both even and $[x_i,x_{2l+1-i}]=(-1)^i \alpha z\ne0$.
Hence,
$$
[x_1,x_{2l}]=-[x_2,x_{2l-1}]=\dots=(-1)^{l-1}[x_l,x_{l+1}].
$$
Again Equation~(\ref{cuenta}) tells us that the remaining brackets
are zero: if $r=i+j-1$ is odd, then $(-1)^i+(-1)^{j-1}=0$, and, if
$r=i+j-1\ne 2l$ is even, then
$\sum_{k=1}^{l}\zeta^{rk}=0$ since $\zeta^2$ is a primitive $l$th root of unit ($\zeta^{rl}=1$with $\zeta^r\ne1$).


We note that this provides a fine grading on $H_{2l+2}^\lambda$
over the group
$$
G= {\mathbb{Z}}  \times {\mathbb{Z}}_{2l},
$$
  given by
$$
\begin{array}{l}
L_{(0, \overline{1})}=\langle u \rangle,\\
L_{(2,\bar1)}=\langle z \rangle,\\
L_{(1,\overline{i})}=\langle x_i \rangle,
\end{array}
$$
for $i=1,...,2l$.

Take $\rho\in\aut(L)$ defined by
\begin{equation}\label{eq_WeylbloquetipoII}
\rho(x_i)=x_{l+i},\quad\rho(z)=(-1)^lz,\quad \rho(u)=u,
\end{equation}
for all $i=1,\dots,2l$ (mod $2l$). This time the Weyl group of the
grading is isomorphic to $\mathbb{Z}_2$, since it is easily proved
to be generated by the class $[ \rho]$.

 This example gives rise to the next  concept.
\begin{de}
Let $L$ be any Lie algebra, $z \in L$ a fixed element, $u\in L$ an
arbitrary element and $\alpha \in \mathbb{F}^\times$. A set
$B_l^{\textrm{II}}(u,\alpha)$, which will be referred as a block
of type $\textrm{II}$, is given by a family of $2l$ independent
elements in $L$,
$$
B_l^{\textrm{II}}(u,\alpha)=\{x_1,\dots,x_{2l}\},
$$
satisfying that the only non-vanishing products among them are the
following:
$$
\begin{array}{ll}
{[u,x_i]} = \alpha x_{i+1}&\forall i=1,\dots,2l\,(\mathop{\rm{mod}} 2l),\\
{[x_i,x_{2l-i+1}]} = (-1)^i\alpha z&\forall i=1,\dots,2l.
\end{array}
$$

\end{de}

\medskip

In fact, all of the   gradings on a twisted Heisenberg algebra
can be described with blocks of types $\textrm{I}$ and
${\textrm{II}}$, according to the following theorem.

\begin{teo}\label{torcidas_lasunicasgradings}
Let $\Gamma$ be a   $G$-grading on $L = H_n^\lambda$. Let
$z\in\mathcal{Z}(L)$. Then there exist $u\in L$,
positive integers $l,s,r $ such that $l(r + 2s) = 2k = n-2$ ($r=0$
when $l$ is odd) and scalars $ \beta_1, \dots, \beta_s,\alpha_1,
\dots, \alpha_r \in \{\pm\lambda_1, \dots, \pm\lambda_k\}$ such
that
\begin{equation}\label{eq_labasedeunafinatwistedcualquiera}
\{z,u\}\cup \big(\bigcup_{j=1}^s B_l^{\textrm{I}}(u,\beta_j)\big) \cup
\big(\bigcup_{i=1}^r B_{\frac l2}^{\textrm{II}}(u,\alpha_i)\big)
\end{equation}
is a basis of homogeneous elements of $\,\Gamma$, being
zero the bracket of any two elements belonging to different
blocks.
\end{teo}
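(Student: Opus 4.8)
The plan is to combine the structural facts already established about a group grading on $H_n^\lambda$ with an inductive decomposition of the inner derivation $\varphi=\ad(u)$. By Lemma~\ref{le_uhomogneo} we may assume that $u$ is homogeneous, say of degree $h\in G$, and by the discussion following that lemma $h$ has finite order $l$; moreover $z$ is homogeneous and $H_n=[L,L]$ is a graded subspace, with $\Spec(u)=\{\pm\xi^j\lambda_i : 0\le j<l,\ 1\le i\le k\}$ for $\xi$ a primitive $l$th root of unity (Equation~(\ref{dos})). The key operator is $\varphi^l$, which preserves the grading since $\varphi^l(L_g)\subset L_{g+lh}=L_g$; hence each eigenspace $V_{\mu}^l=\{x : \varphi^l(x)=\mu x\}$ is a graded subspace of $L$, decomposing as in Equation~(\ref{ccoo}) into $\varphi$-eigenspaces whose eigenvalues form a full $\xi$-orbit $\{\xi^j\mu_0 : 0\le j<l\}$ inside $\Spec(u)$.

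First I would set up the induction on $\dim[u,L]=2k$. Pick a nonzero homogeneous $x_1\in V_{\beta}^l\cap L_g$ for some eigenvalue $\beta=\lambda_{i_0}$; the orbit $\{\xi^j\beta\}$ together with its negatives $\{-\xi^j\beta\}$ occupies a $\varphi$-invariant, graded subspace $U$ of $[u,L]$ of dimension $l$ or $2l$. As in the proof of Proposition~\ref{pr_loscasosqueestansiempre}, the iterates $\varphi^t(x_1)$ lie in distinct homogeneous components $L_{g+th}$ and span (for $t=0,\dots$) a $\varphi$-cyclic subspace on which the relevant partner bracket cannot vanish identically (since $\langle\cdot,\cdot\rangle$ restricted to the orbit-sum is nondegenerate, using $[V_\alpha,V_\mu]=0$ unless $\alpha+\mu=0$ together with $\deg z=g_0$). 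Normalizing by scalars — here I would invoke that $\mathbb F$ is algebraically closed to extract the required $l$th or $2l$th roots — produces from $x_1$ a set which is either a $B_l^{\textrm{I}}(u,\beta)$ (when $-\beta$ already appears in the $\xi$-orbit of $\beta$, equivalently $l$ even) or a $B_{l/2}^{\textrm{II}}(u,\alpha)$ (when it does not, which forces $l$ even and the block to have half-length, matching the constraint ``$r=0$ when $l$ is odd''). The explicit shapes of Equations~(\ref{bloqueprim}) and~(\ref{bloqueseg}) show these are exactly the two possible $\varphi$-module structures on such an orbit-closure.

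Next, having split off one block $W$, I would pass to its centralizer $\mathcal Z_{[u,L]}(W)$, which is again graded and $\varphi$-invariant, and such that $[u,L]=W\oplus\mathcal Z_{[u,L]}(W)$ with $[W,\mathcal Z_{[u,L]}(W)]=0$; then induction applies to $\mathcal Z_{[u,L]}(W)$, yielding the disjoint union of blocks with pairwise zero brackets. Adding back $\{z,u\}$ gives the basis in Equation~(\ref{eq_labasedeunafinatwistedcualquiera}); the arithmetic identity $l(r+2s)=2k$ is just a dimension count ($2l$ vectors per type-I block, $2\cdot\frac l2=l$ per type-II block — wait, each type-II block $B_{l/2}^{\textrm{II}}$ has $2\cdot\frac l2=l$ vectors, each type-I block has $2l$, so $2ls+lr=2k$), and each $\beta_j,\alpha_i$ lies in $\{\pm\lambda_1,\dots,\pm\lambda_k\}=\Spec(u)$ by construction. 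The main obstacle I anticipate is the bookkeeping when $l$ is even and eigenvalues $\beta$ and $-\beta$ interact: one must decide correctly, for each orbit, whether $-\beta$ is already in the $\xi$-orbit of $\beta$ (giving a single type-I block of length $2l$) or sits in a separate orbit (giving a type-II block of half-length $l/2$, where the "folding" $u_i\mapsto u_i\pm v_i$ identifies the $\pm$ eigenvalues into one cyclic string of length $2\cdot\frac l2$), and to verify that the normalizations in Equations~(\ref{bloqueprim})–(\ref{cuenta}) are consistent across the whole orbit so that the final brackets have precisely the signs prescribed in the block definitions. All the other steps are routine linear algebra over an algebraically closed field.
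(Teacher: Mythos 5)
Your overall strategy is the same as the paper's (homogeneous $u$ of finite order $l$ via Lemma~\ref{le_uhomogneo}, the graded $\varphi^l$-eigenspaces $V_\beta^l$ with $\varphi=\ad(u)$, splitting off one block, then inducting on the graded $\varphi$-invariant centralizer $\mathcal{Z}_{[u,L]}(W)$), and the dimension count and the normalizations over an algebraically closed field are fine. But there is a genuine gap at the heart of the argument: your criterion for deciding whether the orbit of an eigenvalue $\beta$ produces a block of type $\textrm{I}$ or of type $\textrm{II}$ is wrong, and in fact self-contradictory. You propose that one gets $B_l^{\textrm{I}}(u,\beta)$ exactly when $-\beta$ lies in the $\xi$-orbit of $\beta$ (which, over a field of characteristic zero, happens precisely when $l$ is even) and $B_{l/2}^{\textrm{II}}(u,\alpha)$ when it does not (which happens precisely when $l$ is odd, yet you then claim this ``forces $l$ even''). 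Read literally, this assigns type $\textrm{II}$ blocks to odd $l$, reversing the theorem's constraint $r=0$ for $l$ odd, and it would force every block to be of type $\textrm{I}$ whenever $l$ is even, which is false: the nontoral grading $\Gamma_1=\Gamma(2,0,k;\lambda_1,\dots,\lambda_k)$ has $l=2$ and consists entirely of type $\textrm{II}$ blocks, while for instance $\Gamma(2,2,0;1,\I)$ on $H_{10}^{(1,1,\I,\I)}$ has $l=2$ and only type $\textrm{I}$ blocks. Since these occur with the same $l$ and overlapping spectra, no criterion phrased purely in terms of the $\xi$-orbit structure of $\Spec(u)$ can distinguish the two cases.

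The missing idea is that the dichotomy depends on the grading itself, not on the spectrum: for $l$ odd one always has $[V_\beta^l,V_\beta^l]=0$ and the pairing partner must be taken in $V_{-\beta}^l$, giving type $\textrm{I}$; for $l$ even one has $V_\beta^l=V_{-\beta}^l$ and one must test whether there exists a homogeneous $x\in V_\beta^l$ with $[x,\varphi(x)]\ne0$. If such an $x$ exists, the single cyclic string $\{x,\varphi(x),\dots,\varphi^{l-1}(x)\}$ pairs with itself and, after scaling, is a block $B_{l/2}^{\textrm{II}}(u,\beta)$; if instead $[x,\varphi(x)]=0$ for every homogeneous $x\in V_\beta^l$, one shows (using that $\varphi$ is a derivation) that $[\varphi^i(x),\varphi^j(x)]=0$ for all $i,j$, chooses a second homogeneous $y\in V_\beta^l$ with $[x,y]\ne0$, and obtains a type $\textrm{I}$ block. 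Your proposal never performs this test, and the verification of the bracket relations inside a block (the analogues of Equations~(\ref{h1})--(\ref{h4}), which rely on degree arguments with $\deg z$ and the decomposition~(\ref{ccoo})) is asserted rather than proved; with the dichotomy corrected, those computations are exactly what remains to be supplied.
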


\begin{proof}

Recall that  $z$ is always a homogeneous element. By Lemma~\ref{le_uhomogneo} and the arguments below, we can
assume that $u$ is also homogeneous of degree $h \in G$,
necessarily of finite order. Let $l\in \mathbb{Z}_{\ge0}$ be the
order of $h$.
 Take $\varphi=\ad(u)$ and consider again the subspaces $V_{\lambda_i}$ and $V_{\lambda_i}^l$.
Recall that $V_{\lambda_i}^l$  is a $\varphi$-invariant  graded
subspace
 for all $\lambda_i\in\Spec(u)$.


Let us discuss first the case   that  $l$ is odd. Fix any $0 \neq
x \in V_{\lambda_1}^l \cap L_g$ for some $g \in G$. Since each
$\varphi^i(x) \in L_{g+ih}$, we have that
$$\{x, \varphi(x),..., \varphi^{l-1}(x)\}$$ is a family of linearly
independent elements of $L$. Now observe that
Equation~(\ref{ccoo}), together with the fact that $l$ is odd, says that
$[V_{\lambda_1}^l,V_{\lambda_1}^l]= 0$ and
$[V_{\lambda_1}^l,V_{-\lambda_1}^l]\neq 0$. From here,
\begin{equation}\label{h1}
\hbox{$[\varphi^i(x), \varphi^j(x)]=0$ for any $i,j=0,1,...,l-1$,}
\end{equation}
and we can take a nonzero homogeneous element $0\neq y\in
V_{-\lambda_1}^l \cap L_p$ such that $[x,y]\neq 0$. By scaling if
necessary, we can suppose $[x,y]=\lambda_1 z$, being then $\deg{
z}=g+p$. As above, we also have that $$\{y, \varphi(y),...,
\varphi^{l-1}(y)\}$$ is a family of linearly independent elements
of $L$ satisfying
\begin{equation}\label{h2}
 \hbox{$[\varphi^i(y), \varphi^j(y)]=0$ for any
$i,j=0,1,...,l-1$.}
\end{equation}
Taking into account that $\varphi^i(x) \in L_{g+ih}$ and $\varphi^j(y)
\in L_{p+jh}$,
 we get that if $[\varphi^i(x),\varphi^j(y)]\neq
0$, then $g+p+(i+j)h=\deg{z}=g+p$, which is only possible if $i+j$
is a multiple of $l$. That is, for each $0\le i,j<l$,
\begin{equation}\label{h3}
\hbox{$[\varphi^i(x),\varphi^j(y)]= 0$ if  $i+j\neq 0,l.$}
\end{equation}
 Also
 note that 
\begin{equation}\label{h4}
[\varphi^i(x),\varphi^{l-i}(y)]= (-1)^{l-i}\lambda_1^l[x,y]\neq 0.
\end{equation}
Indeed, take $\xi$ a primitive $l$th root of the unit and write
$x=\sum_{j=0}^{l-1} a_j$ and $y=\sum_{j=0}^{l-1} b_j$ for $a_j\in
V_{\xi^j\lambda_1}$ and $b_j\in V_{-\xi^j\lambda_1}$, taking into
consideration Equation~(\ref{ccoo}). Then
$$
\begin{array}{r}
[\varphi^i(x),\varphi^{l-i}(y)]=\sum_{j,k}[(\xi)^{ji}\lambda_1^ia_j,(-\xi^k)^{l-i}\lambda_1^{l-i}b_k]=\\
=\lambda_1^l(-1)^{l-i}\sum_j(\xi)^{ji}(\xi)^{j(l-i)}  [a_j,b_j] =
(-1)^{l-i}\lambda_1^l[x,y].
\end{array}
$$

\noindent Finally note that  the family $\{x, \varphi(x),...,
\varphi^{l-1}(x),y, \varphi(y),..., \varphi^{l-1}(y)\}$ is
linearly independent. Indeed, in the opposite case some
$\varphi^i(y)=\beta\varphi^j(x)$, $\beta \in {\mathbb
F}^{\times}$,
 because we are dealing with a
 family of  homogeneous
elements, 
and then $[\varphi^i(y),\varphi^{l-j}(y)]=\beta
[\varphi^j(x),\varphi^{l-j}(y)]\neq 0$, what contradicts Equation
(\ref{h2}).

Taking into account Equations~(\ref{h1}), (\ref{h2}), (\ref{h3})
and (\ref{h4}), we have that
$$\{  \frac{\varphi(x)}{\lambda_1}, \frac{\varphi(y)}{\lambda_1},...,\frac{\varphi^{i}(x)}{\lambda_1^{i}},\frac{\varphi^{i}(y)}{\lambda_1^{i}},..., 
\frac{\varphi^{l}(x)}{\lambda_1^{l}}=x,\frac{\varphi^{l}(y)}{\lambda_1^{l}}=(-1)^ly\}$$
is a block $B_l^{\textrm{I}}(u,\lambda_1)$ of type I.

 Now $[u,L] = W \oplus {\mathcal
Z}_{[u,L]}(W)$ for $W := \span{B_l^{\textrm{I}}(u,\lambda_1)} $,
where $W$ as well as its centralizer   are graded and
$\varphi$-invariant. We continue by iterating this process on
${\mathcal Z}_{[u,L]}(W)$ until finding a basis of $[u,L]$
 formed by $s=\frac kl$ blocks of type I of homogeneous elements.\medskip

 Now    consider  the case with $l$  even. If we think as above of the
 linear subspace
 $0\neq V_{\lambda_1}^l,$ we have two different cases to distinguish.

 Assume first that for any $g \in G$ and any $x \in
 V_{\lambda_1}^l\cap L_g$ we have $[x, \varphi (x)]= 0.$ Fix  $0\neq x \in
 V_{\lambda_1}^l\cap L_g$ for some $g \in G$, being then  $\{x, \varphi(x),..., \varphi^{l-1}(x)\}$  a family of linearly
independent elements of $L$. By induction on $n$ it is easy to
verify, taking into account that $\varphi$ is a derivation,  that
for any $i=0,...,l-1$, we have $[\varphi^i(x),\varphi^{i+n}(x)]=0$
for any $n=1,...,l$. That is, $[\varphi^i(x),\varphi^{j}(x)]=0$
for any $i,j=0,...,l-1.$

Since  the fact that $l$ is even implies
$V_{\lambda_1}^l=V_{-\lambda_1}^l$, we can choose a homogeneous
element $0\neq y \in V_{\lambda_1}^l \cap L_p$, for some $p \in
G$,  such that $0\neq [x,y]=\lambda_1 z.$
The same arguments that in the odd case say that again
$$
\{ \frac{\varphi(x)}{\lambda_1},
\frac{\varphi(y)}{\lambda_1},...,\frac{\varphi^{i}(x)}{\lambda_1^{i}},\frac{\varphi^{i}(y)}{\lambda_1^{i}},...,
\frac{\varphi^{l}(x)}{\lambda_1^{l}},\frac{\varphi^{l}(y)}{\lambda_1^{l}}\}
$$
is a block $B_l^{\textrm{I}}(u,\lambda_1)$ of type I. Now we can
write
\begin{equation}\label{W2}
[u,L] = W \oplus {\mathcal Z}_{[u,L]}(W)
\end{equation}
 for $W :=\span{
B_l^{\textrm{I}}(u,\lambda_1)} $, where $W$ as well as its
centralizer
  are graded and $\varphi$-invariant.

 Second, assume that there exist $g \in G$ and $0\neq x \in
 V_{\lambda_1}^l\cap L_g$ such that $[x, \varphi (x)]\neq 0$. By
 scaling if necessary, we can {assume} $[x, \varphi (x)]= \lambda_1^2 z.$
We have as above that $\{x, \varphi(x),..., \varphi^{l-1}(x)\}$ is
a family of homogeneous linearly independent elements of $L$
satisfying $[\varphi^i(x),\varphi^j(x)]= 0$ if  $i+j\neq 1,l+1$
(take into account that $\deg z=2g+h$).
Besides Equation~(\ref{ccoo}) allows us to get in a
direct way that
$$
[\varphi^i(x), \varphi^{l-i+1}(x)]=(-1)^i\lambda_1^l[x,
\varphi(x)]\neq 0
$$
for any $i=1,\dots,l$.
 Thus the set
$$\{  \frac{\varphi(x)}{\lambda_1}, ...,\frac{\varphi^{i}(x)}{\lambda_1^{i}},..., \frac{\varphi^{l}(x)}{\lambda_1^{l}}=x\}$$
is a block $B_{\frac l2}^{\textrm{II}}(u,\lambda_1)$ of type II.
Now we can write
\begin{equation}\label{W1}
[u,L] = W \oplus {\mathcal Z}_{[u,L]}(W)
\end{equation}
 for $W$ the vector space spanned by the above block $
B_{\frac l2}^{\textrm{II}}(u,\lambda_1) $, where $W$ as well as
its centralizer ${\mathcal Z}_{[u,L]}(W)$ are graded and
$\varphi$-invariant.

Taking into account Equations (\ref{W2}) and (\ref{W1}), we can
iterate this process on ${\mathcal Z}_{[u,L]}(W)$ until finding
the required  basis of $[u,L]$
 formed by $s$ blocks of type I and  $r$ blocks of type~II.
\end{proof}

Theorem~\ref{torcidas_lasunicasgradings} provides, in particular, a description of all the fine gradings on $H_n^\lambda$. 
It is clear that each basis as the one in Equation~(\ref{eq_labasedeunafinatwistedcualquiera}) determines a fine grading, which we will denote by
\begin{equation}\label{eq_nombredelasfinas}
\Gamma(l,s,r;\beta_1,\dots,\beta_s,\alpha_1,\dots,\alpha_r).
\end{equation}
For instance, our well known fine gradings of Subsection~\ref{subsec_casofaciltwisted} are
$$\begin{array}{l}
\Gamma_1=\Gamma(2,0,k;\lambda_1,\dots,\lambda_k),\\
\Gamma_2=\Gamma(1,k,0;\lambda_1,\dots,\lambda_k),
\end{array}
$$
 taking into consideration that
$\{u_i,\frac12v_i\}=B_{1}^{\textrm{I}}(u,\lambda_i)$ and that $\{\hat e_i,
e_i\}=B_{1}^{\textrm{II}}(u,\lambda_i)$.

The point is that the grading   in Equation~(\ref{eq_nombredelasfinas})
does not necessarily exist for all choice of integers $l,s,r$ such that $l(r+2s)=n-2$ and all nonzero scalars $\beta_i,\alpha_j\in\Spec(u)$.
If we are in the situation of Theorem~\ref{torcidas_lasunicasgradings},
and take $\xi$ a primitive $l$th root of
the unit, then
\begin{equation}\label{comodebeserespectro}
\Spec(u) =\{\pm\lambda_i: i=1,\dots, k\}=
\end{equation}
$$
\{\xi^t\beta_j,-\xi^t\beta_j  : j=1, \dots, s, t=0, \dots,
l-1\} \cup\{ \xi^t\alpha_i  : i=1, \dots, r, t=0, \dots, l-1\}.
$$


This condition is not only necessary, but sufficient.


\begin{teo}\label{torcidas_deverdadsongradings} 
A   grading $\Gamma$ on $H_n^\lambda$ is fine if and only if $\Gamma$ is isomorphic to $$\Gamma(l,s,r;\beta_1,\dots,\beta_s,\alpha_1,\dots,\alpha_r)$$
for some  $l,s,r\in\mathbb{Z}_{\ge0}$  such that
$l(r + 2s) = 2k = n-2\, (\ne0)$ and some scalars
$\beta_1,\dots,\beta_s,\alpha_1,\dots,\alpha_r \in
\{\pm\lambda_1,\dots,\pm\lambda_k\}$   such that
(\ref{comodebeserespectro})  {holds}, with $l$ even if $r\ne0$.
The universal grading group in this case is
$$
\begin{array}{ll}
\mathbb{Z}_{l}\times\mathbb{Z}^{s+1} \times \mathbb{Z}_2^{r-1}  & \text{ if }r\ne0,\\
\mathbb{Z}_{l}\times\mathbb{Z}^{s+1}  & \text{ if } r = 0.
\end{array}
$$
\end{teo}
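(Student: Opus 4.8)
The plan is to prove both implications of Theorem~\ref{torcidas_deverdadsongradings}, building on Theorem~\ref{torcidas_lasunicasgradings}. For the ``only if'' direction, suppose $\Gamma$ is a fine grading on $H_n^\lambda$. Theorem~\ref{torcidas_lasunicasgradings} already produces a homogeneous basis of the form in Equation~(\ref{eq_labasedeunafinatwistedcualquiera}), so $\Gamma$ is a coarsening of the grading $\Gamma(l,s,r;\beta_1,\dots,\beta_s,\alpha_1,\dots,\alpha_r)$ attached to that basis; by fineness $\Gamma$ must in fact coincide (up to isomorphism) with it. The spectral constraint~(\ref{comodebeserespectro}) was derived right before the statement: each block $B_l^{\textrm{I}}(u,\beta_j)$ forces the orbit $\{\pm\xi^t\beta_j\}$ into $\Spec(u)$, and each block $B_{l/2}^{\textrm{II}}(u,\alpha_i)$ forces $\{\xi^t\alpha_i\}$, where here $\xi$ is a primitive $l$th root of unity (note the type II block has ``length'' $l/2$ but its element $u$ acts with a cycle of length $l$, hence the parity requirement $l$ even when $r\neq 0$). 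Together with $l(r+2s)=2k$ counting dimensions of $[u,L]$, this gives all stated conditions.

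For the ``if'' direction, the main work is to exhibit, for each admissible choice of data, an actual realization of the block decomposition inside $H_n^\lambda$, i.e.\ to show the abstract grading $\Gamma(l,s,r;\dots)$ really occurs. The strategy is to reverse the two key examples preceding the theorem: given $\beta_j$ together with the requirement that $\{\pm\xi^t\beta_j\}\subset\Spec(u)$, the corresponding coordinates of $\lambda$ form (up to the permutation/scaling equivalence on $\lambda$) a tuple of the shape $(\xi\beta_j,\xi^2\beta_j,\dots,\beta_j)$, and the explicit vectors $x_j,y_j$ of Equation~(\ref{bloqueprim}) build a block $B_l^{\textrm{I}}(u,\beta_j)$; similarly a tuple $(\zeta\alpha_i,\dots,-\alpha_i)$ with $\zeta$ a primitive $l$th root and $\zeta^2$ primitive $(l/2)$th yields a block $B_{l/2}^{\textrm{II}}(u,\alpha_i)$ via Equation~(\ref{bloqueseg}). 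Taking the direct sum of these blocks over a partition of $\{1,\dots,k\}$ dictated by which $\lambda$-coordinates lie in which root-of-unity orbit (this partition exists precisely because~(\ref{comodebeserespectro}) holds) assembles the homogeneous basis~(\ref{eq_labasedeunafinatwistedcualquiera}). Finally one checks this grading is fine: any proper refinement would have to refine one of the blocks, but the blocks are irreducible as graded $\varphi$-modules-with-bracket (the relations force each one-dimensional piece $\langle x_i\rangle$, $\langle y_i\rangle$ to be homogeneous, and the cyclic action of $\varphi$ plus the pairing $[x_i,y_{l-i}]\ne0$ glue them into a single indecomposable block), so no refinement is possible.

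It then remains to compute the universal grading group. Here I would read off the defining relations among degrees directly from the block relations: within a type I block $B_l^{\textrm{I}}(u,\beta_j)$, the relations $[u,x_i]=\beta_jx_{i+1}$ give $\deg x_{i+1}=\deg u+\deg x_i$ (so all $\deg x_i$ are determined by $\deg x_1$ and $\deg u$), the relation $[u,x_l]=\beta_jx_1$ forces $l\deg u=0$, and $[x_i,y_{l-i}]=(-1)^{l-i}\beta_jz$ gives $\deg x_i+\deg y_{l-i}=\deg z$; similarly for type II blocks $[u,x_i]=\alpha_ix_{i+1}$ and $[x_i,x_{2l-i+1}]=(-1)^i\alpha_iz$ (with ``$l$'' there being $l/2$, so the cycle length is $l$). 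Collating: one gets a free $\Z$ from $\deg z$ together with one free generator per type I block (the ``$\deg x_1$ of that block'' modulo the shared data), a $\Z_l$ coming from $\deg u$, and, for the type II blocks, the relation structure forces $2(\deg x_1^{(i)}-\deg x_1^{(i')})=0$ after normalization, contributing a $\Z_2^{r-1}$ (one relation is absorbed), whence $\Z_l\times\Z^{s+1}\times\Z_2^{r-1}$ when $r\ne0$ and $\Z_l\times\Z^{s+1}$ when $r=0$. The main obstacle I anticipate is the bookkeeping in this last step---carefully identifying which relations are independent, in particular why exactly one $\Z_2$ factor among the type II blocks collapses (because $z$'s degree is shared across all blocks, tying the blocks together) and why the type I blocks each contribute a genuinely free $\Z$ rather than something torsion; the rest is the routine but lengthy verification that the explicit vectors in~(\ref{bloqueprim}) and~(\ref{bloqueseg}) satisfy the block axioms, which is already essentially done in the two worked examples.
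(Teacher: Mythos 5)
Your proposal is correct and follows essentially the same route as the paper: the ``only if'' half is delegated to Theorem~\ref{torcidas_lasunicasgradings} and the discussion yielding (\ref{comodebeserespectro}), while the ``if'' half reorders $\lambda$ into root-of-unity orbits and realizes the blocks explicitly via Equations~(\ref{bloqueprim}) and (\ref{bloqueseg}), fineness being automatic because every homogeneous component is one-dimensional, and the universal group is extracted from the degree relations exactly as the paper's explicit degree list encodes. One bookkeeping detail deserves care (you flagged it yourself): when $r\ne 0$ the free $\mathbb{Z}$-summand is generated by the degree of a type~II element such as $a_1^r$, since $\deg z=2\deg a_1^r-\deg u$ is divisible by $2$ modulo torsion and hence does not span a direct summand, so ``a free $\mathbb{Z}$ from $\deg z$'' is accurate only when $r=0$; this does not affect the isomorphism type $\mathbb{Z}_l\times\mathbb{Z}^{s+1}\times\mathbb{Z}_2^{r-1}$ you obtain.
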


\begin{proof} The fact that any fine grading is like the above has been proved in  Theorem~\ref{torcidas_lasunicasgradings}. For the converse, reorder
$$
\lambda=(\xi\beta_1,\dots,\xi^l\beta_1,\dots, \xi\beta_s,\dots,\xi^l\beta_s,\zeta\alpha_1,\dots,\zeta^l\alpha_1,\dots,\zeta\alpha_r,\dots,\zeta^l\alpha_r),
$$
(with $\zeta^2=\xi$), what is possible because of (\ref{comodebeserespectro}).
Take $\{z,u,u_1,\dots,u_k, v_1,\dots,v_k\}$ a basis as in Equation~(\ref{usyuves}).
Take $\{x_1^j,y_1^j,\dots,x_l^j,y_l^j\}$ a block
$B_l^{\textrm{I}}(u,\beta_j)$ for each $j\le s$, chosen as in Equation~(\ref{bloqueprim}).
Take $\{a_1^t,\dots,a_l^t\}$ a block $B_{\frac
l2}^{\textrm{II}}(u,\alpha_t)$ for each $t\le r$, chosen as in Equation~(\ref{bloqueseg}).
Now the union of these blocks as in Equation~(\ref{eq_labasedeunafinatwistedcualquiera}) provides a basis which determines a fine grading (since the product of two elements in the basis is   multiple of another one). This grading
has universal  group
$\mathbb{Z}_l\times\mathbb{Z}^s\times\mathbb{Z}\times\mathbb{Z}_2^{r-1}$
and it is given by
$$
\begin{array}{l}
\deg(x_i^j)=(\overline{i+1};\,0,\dots,1,\dots,0;\,1;\,\bar0,\dots,\bar0)
\quad\text{($1$ in the $j$th slot)},\\
\deg(y_i^j)=(\overline{i};\,0,\dots,-1,\dots,0;\,1;\,\bar0,\dots,\bar0),\\
\deg(z)=(\overline{1};\,0,\dots ,0;\,2;\,\bar0,\dots,\bar0),\\
\deg(u)=(\overline{1};\,0,\dots ,0;\,0;\,\bar0,\dots,\bar0),\\
\deg(a_i^t)=(\overline{i};\,0,\dots ,0;\,1;\,\bar0,\dots,\bar1,\dots,\bar0)\quad\text{($\bar1$ in the $t$th slot if $t< r$)},\\
\deg(a_i^r)=(\overline{i};\,0,\dots ,0;\,1;\,\bar0,\dots,\bar0).
\end{array}
$$
\end{proof}

In practice, when one wants to know how many gradings are in a
particular twisted Heisenberg  algebra $H_n^\lambda$, it is
enough to see how many ways are of   {splitting}
$\{\pm\lambda_1,\dots,\pm\lambda_k\}$     in the way described in
Equation~(\ref{comodebeserespectro}).
\smallskip

 \begin{example}\rm
 Let us compute how many fine gradings  {are there} in $L=H_{10}^{(1,1,\I,\I)}$.
 As $l$ must divide $8$, the possibilities are $l=1$, $l=2$ with $(r,s)=(4,0),(2,1),(0,2)$ and $l=4$ with $(r,s)=(2,0),(0,1)$.
 But we have  seven  (not six) fine gradings:
 \begin{itemize}
 \item the  $\mathbb{Z}^5$-grading $\Gamma(1,4,0;1,1,\I,\I)$ (the only toral fine grading, $\Gamma_2$);
 \item the  $\mathbb{Z}\times \mathbb{Z}_2^4$-grading $\Gamma(2,0,4;1,1,\I,\I)$ (again  $\Gamma_1$);
 \item two $\mathbb{Z}^2\times\mathbb{Z}_2^2$-gradings: $\Gamma(2,1,2;\I,1,1)$ and $\Gamma(2,1,2;1,\I,\I)$;
 \item the $\mathbb{Z}^3\times\mathbb{Z}_2$-grading $\Gamma(2,2,0;1, \I)$;
 \item the $\mathbb{Z}\times\mathbb{Z}_4\times\mathbb{Z}_2$-grading $\Gamma(4,0,2;1,1)$;
 \item the $\mathbb{Z}^2\times \mathbb{Z}_4$-grading $\Gamma(4,1,0;1)$.
 \end{itemize}
 The possibility $l=8$ does not happen taking into account that $\pm\frac{\lambda_i}{\lambda_j}$ is never a primitive eighth  root of the unit when $\lambda_i,\lambda_j\in\Spec(u)$.
 \end{example}
 \smallskip


For boarding the problem of the classification of the  fine gradings up to equivalence, we need to make some considerations.

\begin{lemma}\label{le_cambioenlosbloques}
Let $L$ be a Lie algebra, $z\in \mathcal{Z}(L)$, $u\in L$,
$\alpha,\beta\in \mathbb{F}^{\times}$ such that $L$ contains
blocks of types $B_l^\nu(u,\alpha)$ and $B_l^\nu(u,\beta)$ for
some $\nu\in\{{\textrm{I,II}}\}$. Then
\begin{itemize}
\item[i)]
$\span{B_l^\textrm{I}(u,\alpha)}=\span{B_l^\textrm{I}(u,\beta)}$
if and only if $\left(\frac\alpha\beta\right)^l=1$ if $l$ is even
and $\left(\frac\alpha\beta\right)^{2l}=1$ if $l$ is odd.
\item[ii)]
$\span{B_l^\textrm{II}(u,\alpha)}=\span{B_l^\textrm{II}(u,\beta)}$
if and only if $\left(\frac\alpha\beta\right)^l=1$.
\end{itemize}
\end{lemma}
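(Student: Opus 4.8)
The plan is to read off the relevant data from the semisimple operator $\varphi:=\ad(u)$ and then to realize the converse by an explicit rescaling of the block elements. For necessity, set $W:=\span{B_l^\nu(u,\alpha)}$. The block relations show that $W$ is $\varphi$-invariant and that $\varphi|_W$ is diagonalizable, so I would simply list its eigenvalue multiset from the given block. Writing $\mu_m$ for the group of $m$th roots of unity: when $\nu=\textrm{I}$ the $x$-family of $B_l^\textrm{I}(u,\alpha)$ contributes the eigenvalues $\alpha\mu_l$ and the $y$-family contributes $-\alpha\mu_l$, so the multiset is $\alpha\mu_l$ with every value doubled if $l$ is even and is the $2l$ distinct values $\pm\alpha\mu_l$ if $l$ is odd; when $\nu=\textrm{II}$ the $2l$-cycle of the $x_i$'s gives the multiset $\alpha\mu_{2l}$. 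If in addition $W=\span{B_l^\nu(u,\beta)}$, the same multiset must come out of the $\beta$-block, and equating the two expressions forces $\alpha/\beta$ into the appropriate finite group of roots of unity: $(\alpha/\beta)^l=1$ in the even case of (i) (there $\alpha/\beta\in\mu_l$), $(\alpha/\beta)^{2l}=1$ in the odd case of (i) (there $\alpha/\beta\in\pm\mu_l$), and likewise the stated condition in (ii).

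For sufficiency, start from a fixed block $B_l^\nu(u,\alpha)=\{x_1,y_1,\dots\}$ (resp.\ $\{x_1,\dots,x_{2l}\}$) inside $L$, put $\zeta:=\beta/\alpha$, and set $x_i':=\zeta^{\,i-1}c\,x_i$, together with $y_i':=\zeta^{\,i-1}c'\,y_i$ in type I, for scalars $c,c'\in\mathbb{F}^\times$ to be fixed. The relations $[u,x_i]=\alpha x_{i+1}$ become $[u,x_i']=\beta x_{i+1}'$ at once, the ``closing'' relations such as $[u,x_l]=\alpha x_1$ survive precisely because $\zeta$ satisfies the root-of-unity identity in the hypothesis, and the brackets valued in $\mathbb{F}z$, such as $[x_i,y_{l-i}]=(-1)^{l-i}\alpha z$, become $(-1)^{l-i}\beta z$ once $cc'$ (or $c^2$ in type II) is chosen to carry the right factor, using that $\mathbb{F}$ has the square roots we need. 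The only subcase needing a new idea is $\nu=\textrm{I}$ with $l$ odd and $(\alpha/\beta)^l=-1$: now $\zeta\notin\mu_l$, so instead one builds the new block by exchanging the $x$- and $y$-families of the old one, since the $\varphi$-spectrum on the $y$-part of $B_l^\textrm{I}(u,\alpha)$ is $-\alpha\mu_l=\beta\mu_l$, exactly the $x$-spectrum a $\beta$-block must have (again with a suitable rescaling). In each case the block $B_l^\nu(u,\beta)$ so obtained consists of $2l$ linearly independent vectors lying in $W$, hence spans $W=\span{B_l^\nu(u,\alpha)}$.

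The computational core, and the part I expect to be the main obstacle, is the sufficiency direction: one has to check that \emph{every} non-vanishing bracket transforms correctly under the rescaling, keeping track of the sign twists in $[u,y_l]=(-1)^l\alpha y_1$ and $[x_l,y_l]=(-1)^l\alpha z$ and of the parity case split on $l$; the genuinely non-routine point, as opposed to bookkeeping, is spotting that in the odd/$(-1)$ subcase of type I the two families must be interchanged rather than merely rescaled.
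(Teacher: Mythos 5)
Your treatment of part (i) is correct and is essentially the paper's own argument: the paper extracts the necessity from the eigenvalues of $\varphi^l=\ad(u)^l$ on the span ($\alpha^l$ on the $x$-family, $(-1)^l\alpha^l$ on the $y$-family), which is the same information as your eigenvalue multiset of $\varphi$, and its sufficiency consists of exactly your two constructions: the rescaling $x_i\mapsto \zeta^{1-2i}x_i$, $y_i\mapsto\zeta^{1-2i}y_i$ with $\zeta^2=\xi$ a primitive $l$th root of unity, plus, for $l$ odd, the interchange of the two families with alternating signs. (Small slip: with $x_i'=\zeta^{i-1}c\,x_i$ and $\zeta=\beta/\alpha$ you get $[u,x_i']=(\alpha^2/\beta)\,x_{i+1}'$; you need $\zeta=\alpha/\beta$, or exponents $1-i$.)

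Part (ii) has a genuine gap, located exactly at the words ``likewise the stated condition in (ii)''. By your own computation the spectrum of $\varphi$ on $\langle B_l^{\textrm{II}}(u,\alpha)\rangle$ is the set of $2l$ scalars $\xi^t\alpha$ with $\xi$ a primitive $2l$th root of unity, so equating the multisets for $\alpha$ and $\beta$ gives only $(\alpha/\beta)^{2l}=1$, not $(\alpha/\beta)^l=1$: the stated necessity does not follow from your argument. Nor can it: if $w:=\alpha/\beta$ satisfies $w^{2l}=1$ and you set $x_i':=w^{i-1}x_i$, then $[u,x_i']=\beta x_{i+1}'$ cyclically (the closure at $i=2l$ only needs $w^{2l}=1$) and $[x_i',x_{2l+1-i}']=w^{2l-1}(-1)^i\alpha z=(-1)^i\beta z$, so $\{x_1',\dots,x_{2l}'\}$ is a block $B_l^{\textrm{II}}(u,\beta)$ with the same span; already for $l=1$ and $\beta=-\alpha$ the pair $\{x_1,-x_2\}$ does the job. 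Hence the ``only if'' of (ii) as printed fails, and the correct exponent there is $2l$; this corrected version is also the one compatible with how the lemma is used later in the paper, where blocks $B_{l/2}^{\textrm{II}}(u,\alpha)$ are compared through the relation $\widehat{\alpha}=\widehat{\beta}$, i.e.\ $(\alpha/\beta)^{l}=1$ for a block on $l$ elements. Your rescaling does prove the sufficiency direction of (ii) without problems (indeed under the weaker hypothesis $(\alpha/\beta)^{2l}=1$), but as written your proof claims a necessity that the spectral argument does not and cannot deliver; you should prove the $2l$ version, or explicitly flag the discrepancy --- the paper's own proof only says ``similar arguments'' here, and this is precisely the point where the similarity breaks down.
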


\begin{proof}
As usual, denote $\varphi=\ad(u)$ and $\xi$ a primitive $l$th root
of the unit.
For i), take $ B_l^\textrm{I}(u,\alpha)=\{x_1,y_1,\dots,x_l,y_l\}$
and $V$ the vector space spanned by these elements. Note that
$\varphi^l$ diagonalizes $V$ with the only eigenvalues $\alpha^l$ and
$(-1)^l\alpha^l$ (eigenvectors $x_i$'s and $y_i$'s respectively).
Thus either $\alpha^l=\beta^l$ or $\alpha^l=(-1)^l\beta^l$.

Conversely, let us see that there is a block of type $
B_l^\textrm{I}(u,\xi\alpha)$ contained in
$\span{B_l^\textrm{I}(u,\alpha)}$. Indeed, take $\zeta$ such that
$\zeta^2=\xi$. The elements $x_i':=\zeta^{1-2i}x_i$ and
$y_i':=\zeta^{1-2i}y_i$ constitute a block of type $B_l^\textrm{I}(u,\zeta^2\alpha)$. Moreover,
if we take $\delta$ such that $\delta^2=\zeta$, then the   elements
$x_i':=(-1)^i\delta^{1-2i}y_i$ and $y_i':=(-1)^i\delta^{1-2i}x_i$
constitute a block of type $ B_l^\textrm{I}(u, \zeta\alpha)$ if
$l$ is odd.

The case ii) is  {proved} with similar arguments.
\end{proof}

These arguments make convenient to consider the following equivalence
relation  in $\mathbb{F}^{\times}$.
 Two nonzero scalars $\alpha$ and  $\beta$  are $m$-related
  if and only if $(\frac\alpha\beta)^m=1$.
  The
equivalence classes of the element $\alpha$ for the $l$-relation and the $2l$-relation will be denoted respectively by
$$
\begin{array}{l}
\widehat\alpha:=\{\alpha\xi^t: t=0,\dots,l-1\},\\
\widetilde\alpha:=\{\alpha\zeta^t: t=0,\dots,2l-1\},
\end{array}
$$
where  $\xi,\zeta\in\mathbb{F}$ will denote from now on some fixed $l$th and $2l$th primitive roots of the unit, respectively. 

\begin{teo}\label{teo_clasificacionfinassalvoequivalencia}
Two fine gradings on $H_{n}^\lambda$, $\Gamma=\Gamma(l,s,r;\beta_1,\dots,\beta_s,\alpha_1,\dots,\alpha_r)$
and $\Gamma'=\Gamma(l',s',r';\beta'_1,\dots,\beta'_s,\alpha'_1,\dots,\alpha'_r)$,
are equivalent if and only if $\,l=l'$, $s=s'$, $r=r'$ and there are $\varepsilon\in \mathbb{F}^\times$, $\eta\in S_s$ and $\sigma\in S_r$
such that for all $j=1,\dots,s$,
$\widehat{\varepsilon\beta}'_j=\widehat\beta_{\eta(j)}$ if $l$ is odd and
$\widetilde{\varepsilon\beta}_j'=\widetilde\beta_{\eta(j)}$ if $l$ is even,
and for all $i=1,\dots,r$,
$\widehat{\varepsilon\alpha}'_i=\widehat\alpha_{\sigma(i)}$.
\end{teo}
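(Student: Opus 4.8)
The plan is to prove both directions by tracking how an equivalence acts on the distinguished element $u$ and on the spectral data encoded in the blocks. Recall from Theorem~\ref{torcidas_lasunicasgradings} that a fine grading $\Gamma(l,s,r;\beta_1,\dots,\beta_s,\alpha_1,\dots,\alpha_r)$ comes with a homogeneous basis $\{z,u\}\cup\bigcup_j B_l^{\textrm I}(u,\beta_j)\cup\bigcup_i B_{l/2}^{\textrm{II}}(u,\alpha_i)$, and that the order $l$ of $\deg u$ and the spectrum of $\ad(u)|_{[u,L]}$ are intrinsically attached to the grading. First I would address the ``only if'' direction. Suppose $\varphi\colon L\to L$ is an algebra automorphism and $\rho\colon S\to S'$ a bijection of supports with $\varphi(L_g)=L'_{\rho(g)}$. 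Since $\varphi$ fixes $[L,L]=H_n$ and $\mathcal Z(L)=\langle z\rangle$, we get $\varphi(z)\in\langle z\rangle$; and by Lemma~\ref{le_uhomogneo} (applied to $\Gamma'$) together with the fact that $\varphi$ carries a homogeneous element of the form $(\text{scalar})u+H_n$ to another one, we may adjust so that $\varphi(u)=\varepsilon u'$ for a scalar $\varepsilon\in\mathbb F^\times$ (here $u,u'$ are the respective distinguished elements). Then $\varphi\,\ad(u)\,\varphi^{-1}=\varepsilon^{-1}\ad(u')$ as operators on $L$, which at once forces $l=l'$ (equality of orders of $\deg u$, $\deg u'$), and gives $\Spec(u')=\varepsilon\,\Spec(u)$.

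Next I would show $\varphi$ carries each block of $\Gamma$ into a block of $\Gamma'$ of the same type. The key point is that a block $B_l^{\textrm I}(u,\beta)$ (resp.\ $B_{l/2}^{\textrm{II}}(u,\alpha)$) spans a $\varphi$-invariant — hence $\varphi$-image — graded subspace, and these spans are exactly the minimal nonzero $\ad(u)$-invariant graded subspaces of $[u,L]$ on which $\ad(u)^l$ is scalar with the block's characteristic eigenvalue pattern (positive and negative square-roots-of-unity orbit for type I; a single $2l$-orbit that is not a union of $\pm$-pairs for type II — this is how one reads off $r=0$ when $l$ is odd). Since $\varphi$ conjugates $\ad(u)$ into $\varepsilon^{-1}\ad(u')$, it must send the span of $B_l^{\textrm I}(u,\beta_j)$ onto the span of some $B_l^{\textrm I}(u',\gamma)$ with $\widehat{\varepsilon\gamma}=\widehat{\beta_j}$ (or $\widetilde{\varepsilon\gamma}=\widetilde{\beta_j}$ when $l$ is even) — this is precisely where Lemma~\ref{le_cambioenlosbloques} enters, since two type-I blocks for $u'$ have equal span iff their parameters are $l$-related (or $2l$-related, according to the parity of $l$); likewise for type II with $l$-relatedness. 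Matching up the spans yields $s=s'$, $r=r'$, a permutation $\eta\in S_s$ with $\widehat{\varepsilon\beta}'_j=\widehat\beta_{\eta(j)}$ (resp.\ tilde version), and $\sigma\in S_r$ with $\widehat{\varepsilon\alpha}'_i=\widehat\alpha_{\sigma(i)}$, exactly as claimed.

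For the ``if'' direction the argument is constructive. Given $l=l'$, $s=s'$, $r=r'$, and $\varepsilon,\eta,\sigma$ with the stated relations on the equivalence classes, I would build the automorphism in stages. On the even part relations: first use the explicit block-rescaling automorphisms from the proof of Lemma~\ref{le_cambioenlosbloques} (the maps $x_i\mapsto\zeta^{1-2i}x_i$, etc., and their odd-$l$ analogues) to move each $B_l^{\textrm I}(u,\varepsilon\beta'_j)$ onto $B_l^{\textrm I}(u,\beta_{\eta(j)})$ inside the same span, and similarly for the type-II blocks; then use the permutation automorphisms that shuffle whole blocks according to $\eta$ and $\sigma$; finally absorb the scalar $\varepsilon$ by sending $u\mapsto\varepsilon u$, $z\mapsto\varepsilon z$ (and scaling the remaining basis vectors to keep the bracket relations, exactly as in the $H_n^\lambda$-isomorphism discussion in Subsection~\ref{subsec_casofaciltwisted}). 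Composing these gives an algebra automorphism carrying the homogeneous basis of $\Gamma$ to that of $\Gamma'$, hence an equivalence.

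The main obstacle I anticipate is the bookkeeping in the ``only if'' direction: making rigorous the claim that the spans $\span{B_l^{\nu}(u,\cdot)}$ are \emph{intrinsically} characterized (so that $\varphi$ has no choice but to permute them), and in particular cleanly distinguishing type I from type II via the $\ad(u)^l$-eigenvalue pattern — for type I the eigenspace of $\ad(u)^l$ on the block pairs up eigenvalues $\beta^l$ and $(-1)^l\beta^l$ coming in a $\pm$-symmetric way, whereas a type-II block contributes a single cyclic $\ad(u)$-orbit of length $2\cdot\frac l2=l$ whose eigenvalue set is $\{\zeta^t\alpha\}$ and is \emph{not} $\pm$-invariant. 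Once this spectral dictionary is set up carefully (using Equation~(\ref{ccoo}) and the spectrum description (\ref{comodebeserespectro})), the rest is a matter of invoking Lemma~\ref{le_cambioenlosbloques} and assembling permutations and scalars. The parity split ($l$ odd forces $r=0$; $l$ even allows type-II blocks of size $l/2$) must be threaded through both directions consistently, but it is governed entirely by which relation — $l$-relation or $2l$-relation — controls equality of type-I block spans, so no genuinely new difficulty arises there.
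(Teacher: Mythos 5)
Your overall architecture for the ``only if'' direction is the same as the paper's (normalize $f(z)$, get $f(u)=\varepsilon u'$, match blocks, finish with Lemma~\ref{le_cambioenlosbloques}), but the step you yourself single out as the crux --- the intrinsic, purely spectral characterization of the block spans --- does not work, and this is a genuine gap. For $l$ even (the only case in which type~II blocks exist), $-1=\xi^{l/2}$ is an $l$th root of unity, so the $\ad(u)$-eigenvalue set of a type~II span $\langle B_{\frac l2}^{\textrm{II}}(u,\alpha)\rangle$, namely $\{\alpha\xi^t: 0\le t<l\}$, \emph{is} $\pm$-invariant, contrary to your claim; moreover a type~I span $\langle B_l^{\textrm{I}}(u,\beta)\rangle$ has exactly the same eigenvalue set $\{\beta\xi^t\}$, each value with multiplicity $2$. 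Hence, as $\ad(u)$-modules, one type~I block with parameter $\beta$ is isomorphic to the direct sum of two type~II blocks with parameter $\beta$, and both configurations genuinely occur and are inequivalent (e.g.\ $\Gamma(2,1,0;\beta)$ versus $\Gamma(2,0,2;\beta,\beta)$ on $H_6^{(\beta,\beta)}$), so no invariant built from $\Spec(\ad(u))$ or from $\ad(u)^l$ can separate the two types. Your auxiliary claim that the block spans are the \emph{minimal} nonzero $\ad(u)$-invariant graded subspaces also fails for type~I: such a span splits into the $x$-half and the $y$-half, each of which is $\ad(u)$-invariant and graded. What is actually needed at this point --- and what the paper uses --- is the bracket criterion: every basis element $x$ of a type~I block satisfies $[x,\ad(u')^m(x)]=0$ for all $m$, whereas every basis element of a type~II block has some $m$ with $[x,\ad(u')^m(x)]\ne0$; since $f(x_1^j)$ is (by one-dimensionality of the components) a multiple of a basis element of $\Gamma'$, this forbids a type~I element from landing in a type~II block, and then $f(y_1^j)$, and inductively the $\varphi$-iterates, are forced into the same span, giving equality of the two $2l$-dimensional spans before invoking Lemma~\ref{le_cambioenlosbloques}. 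Without replacing your spectral dictionary by this bracket-based one, the matching of types, and hence of $s,r$ and of the classes, is not justified.

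Two smaller points. The conjugation relation should read $f\,\ad(u)\,f^{-1}=\varepsilon\,\ad(u')$ (your $\varepsilon^{-1}$ is a harmless renaming). More seriously for your ``if'' direction: the map $u\mapsto\varepsilon u$, $z\mapsto\varepsilon z$ with line-by-line rescalings is an automorphism only when $\varepsilon^l=1$ (the cyclic relations $[u,x_i]=\beta x_{i+1}$ force this), so the scalar $\varepsilon$ cannot be ``absorbed'' as a separate last stage; it must be incorporated when you choose, inside each target span, the block $B_l^{\textrm{I}}(u',\beta_{\eta(j)}/\varepsilon)$ provided by Lemma~\ref{le_cambioenlosbloques}, and only the residual freedom in scaling each block is used to match the $z$-outputs. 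This is repairable, but as written the construction does not produce an automorphism for general $\varepsilon$.
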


\begin{proof}
The grading $\Gamma$ is given by
   blocks $B_l^{\textrm{I}}(u,\beta_j)=\{x_1^j,y_1^j,\dots,x_l^j,y_l^j\}$ if $j\le s$,
and   blocks $B_{\frac
l2}^{\textrm{II}}(u,\alpha_i)=\{a_1^i,\dots,a_l^i\}$ if $i\le r$. Also the grading $\Gamma'$ is given by
   blocks $B_{l'}^{\textrm{I}}(u',\beta'_j)$ if $j\le s'$,
and   blocks $B_{\frac
{l'}2}^{\textrm{II}}(u',\alpha'_i)$ if $i\le r'$, for some homogeneous element $u'\in L$ ($z$ is fixed).

Let $f\colon H_{n}^\lambda\to H_{n}^\lambda=L$ be an automorphism applying any homogeneous component of $\Gamma$ into a homogeneous component of $\Gamma'$ (take into account that all of them are one-dimensional). Note that $f$ applies $\mathcal{Z}(L)=\span{z}$ into itself. We can assume that $f(z)=z$, by replacing $f$ with the composition of $f$
 with the automorphism given by
\begin{equation}\label{eq_automorfparaajustarlaz}
z\mapsto \alpha^2z,\,u\mapsto u,\,x_i^j\mapsto\alpha
x_i^j,\,y_i^j\mapsto\alpha y_i^j,\,a_i^t\mapsto\alpha a_i^t
\end{equation}
for a convenient  $\alpha\in \mathbb{F}^{\times}$.

 As $f$ leaves $H_n=[L,L]$ invariant, the homogeneous element $ f(u)\notin f([L,L])$, so that there is $\varepsilon\in \mathbb{F}^{\times}$ such that $f(u)=\varepsilon u'$.
Fixed  $j\le s$,  we have that
$$
f(x_1^j)\in f(B_l^{\textrm{I}}(u,\beta_j))=B_l^{\textrm{I}}(f(u),\beta_j)=B_l^{\textrm{I}}(u',\beta_j/\varepsilon),
 $$
but the homogeneous element $
f(x_1^j)$ must also belong either to some   $\span{B_{l'}^{\textrm{I}}(u',\beta'_q)}$ or to some $\span{B_{\frac
{l'}2}^{\textrm{II}}(u',\alpha'_q)}$, and so it is multiple of one of the   independent elements forming such blocks (by homogeneity). Observe that the second possibility can be ruled out, because any element $x$ in a block of type I (related to $u'$) verifies that  $[x,\varphi^m(x)]=0$ for all  $m\in \mathbb{N}$ (being $\varphi=\ad(u')$), while
 any element $x$ in a block of type II (related to $u'$) verifies that  there is $m\in \mathbb{N}$ such that $[x,\varphi^m(x)]=0$.

 Necessarily $l'=l$, since the size $m$ of a concrete block of type I can be computed by taking $x$ any of its elements (obviously no problem if we take a nonzero multiple) and then $m\ge1$ is the minimum integer such that $\varphi^m(x)\in\span{x}$. As $\varepsilon(\ad u')f=f(\ad u)$, the result follows.

 The also homogeneous element $f(y_1^j)$ must belong to the same   $\span{B_{l}^{\textrm{I}}(u',\beta'_q)}$, since $[ f(x_1^j),f(y_1^j) ]\ne0$ while the bracket of two different blocks is zero. And $f(x_{i+1}^j)=\frac1{\beta_j}\varphi(f(x_i^j))$   belongs to  $\span{B_{l}^{\textrm{I}}(u',\beta'_q)}$ too
 for induction on $i$, since the space spanned by the block is $\varphi$-invariant. We have proved that $\span{B_l^{\textrm{I}}(u',\beta_j/\varepsilon)}=\span{\{f(x_i^j),f(y_i^j):i=1,\dots,l\}}
 \subset\span{B_{l}^{\textrm{I}}(u',\beta'_q)}$, but both spaces have the same dimension $2l$, so they coincide. Now we apply Lemma~\ref{le_cambioenlosbloques}(i) to get that
 $\widehat{\varepsilon\beta}'_q=\widehat\beta_{j}$ if $l$ is odd and
$\widetilde{\varepsilon\beta}_q'=\widetilde\beta_j$ if $l$ is even.

We proceed analogously with the blocks of type II.
\end{proof}

\subsection{Weyl groups}

Finally, we would like to compute the Weyl groups of the fine gradings on the twisted Lie algebra $L=H_n^\lambda$.

 Let $\Gamma$ be a fine grading on $L$. Then, taking into account Theorem~\ref{teo_clasificacionfinassalvoequivalencia},
 there are $l,s,r\in \mathbb{Z}_{\ge0}$ with $n-2=l(2s+r)$  such that
 $\Gamma=\Gamma(l,s,r;\beta_1,\dots,\beta_s,\alpha_1,\dots,\alpha_r)$,
 where the scalars $\beta_j,\alpha_i\in\mathbb{F}^{\times}$ can be chosen such that
 \begin{equation}\label{eq_elrepartoparalosupsilon}
 \begin{array}{ll}
 \{\beta_1,\dots,\beta_s\}=\{\delta_1,\dots,\delta_1,\dots,\delta_{s'},\dots,\delta_{s'}\}&\text{each $\delta_i$ repeated $m_i$ times,}\\
 \{\alpha_1,\dots,\alpha_r\}=\{\gamma_1,\dots,\gamma_1,\dots,\gamma_{r'},\dots,\gamma_{r'}\}&\text{each $\gamma_i$ repeated $n_i$ times},
 \end{array}
 \end{equation}
 (reordering the blocks) and such that
 $$
 \begin{array}{ll}
 \widehat\delta_i\ne\widehat\delta_j,\ \widehat\gamma_i\ne\widehat\gamma_j\quad\forall i\ne j&\text{ if $l$ is even},\\
 \widetilde\delta_i\ne\widetilde\delta_j\quad\forall i\ne j&\text{ if $l$ is odd (so $r=0$)}.
 \end{array}
 $$
 (Obviously $m_1+\dots+m_{s'}=s$ and $n_1+\dots+n_{r'}=r$).

 A basis of homogeneous elements of this grading is formed by $z\in \mathcal{Z}(L)$, $u$,
  blocks $B_l^{\textrm{I}}(u,\beta_j)=\{x_1^j,y_1^j,\dots,x_l^j,y_l^j\}$ if $j\le s$,
and   blocks $B_{\frac
l2}^{\textrm{II}}(u,\alpha_i)=\{a_1^i,\dots,a_l^i\}$ if $i\le r$.

First observe that each $\sigma=(\sigma_1,\dots,\sigma_{r'})\in S_{n_{1 }}\times\dots\times S_{n_{r' }}$
 (that is, $\sigma(n_1+\dots+n_j+t)=n_1+\dots+n_j+\sigma_{j+1}(t)$ if $1\le t\le n_{j+1}$) and each $ \eta\in
 S_{m_{1 }}\times\dots\times S_{m_{s' }}$ allow to define the automorphism $\Upsilon_{(\eta,\sigma)}\in\aut(\Gamma)$ by
 $$
 z\mapsto z,\,
 u\mapsto u,\,
 x_i^j\mapsto x_i^{ \eta(j)},\,
 y_i^j\mapsto y_i^{\eta(j) },\,
 a_i^t\mapsto a_i^{ \sigma(t)},
$$
which obviously preserves the grading but interchanges \emph{the blocks}.

We have  besides  some remarkable elements in the group of automorphisms of
the grading which fix all the spaces spanned by the blocks (generalizing Equations~(\ref{eq_WeylbloquetipoI})
and  (\ref{eq_WeylbloquetipoII})). For each
$j\le s$, consider $\theta_j\in\aut(\Gamma)$ leaving invariant
$\span{\{z,u,x_i^t,y_i^t,a_i^p: i\le l,p\le r,t\le s,t\ne j\}}$
and such that
$$
\theta_j(x_i^j)=\I x_{i+1}^j,\qquad \theta_j(y_i^j)=\I y_{i-1}^j,
$$
(indices taken modulo $l$). If $l$ is   even, we can
also consider $\vartheta _j\in\aut(\Gamma)$ leaving invariant
$\span{\{z,u,x_i^t,y_i^t,a_i^p: i\le l,p\le r,t\le s,t\ne j\}}$
and such that
$$
\vartheta_j(x_i^j)=y_{i}^j,\qquad \vartheta_j(y_i^j)=-x_{i}^j.
$$
Finally consider (also for $l$ even) for each $t\le r$ the automorphism
  $\varrho_t\in\aut(\Gamma)$ leaving invariant the subspace
  $\span{\{z,u,x_i^j,y_i^j,a_i^p: i\le l,p\le r,j\le s,p\ne t\}}$
and such that
$$
\varrho_t(a_i^t)=a_{\frac{l}2+i}^t
$$
for all $i\le l$ (sum modulo $l$).

\begin{lemma}\label{le_losquefijanelu}
If $l$ is even and the automorphism $f\in\aut(\Gamma)$ is such that $f(z)=z$ and $f(u)=u$, then   $[f]$ belongs to the group  generated by
 \begin{equation}\label{eq_losquefijanu}
   \{ [\Upsilon_{(\eta,\sigma)}],[\theta_j],[\vartheta_j],[\varrho_t]: j\le s, t\le r\}
 \end{equation}
with $\eta\in S_{m_{1 }}\times\dots\times S_{m_{s' }}$
 and $\sigma\in S_{n_{1 }}\times\dots\times S_{n_{r' }}$   permutations as above.
\end{lemma}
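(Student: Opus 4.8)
The plan is to straighten out $f$ one block at a time, multiplying it successively on the left by representatives of the listed classes until what is left lies in $\stab(\Gamma)$; then $[f]$ is automatically a product of $[\Upsilon_{(\eta,\sigma)}]$, $[\theta_j]$, $[\vartheta_j]$ and $[\varrho_t]$. Fix the homogeneous basis $\{z,u\}\cup\bigcup_{j\le s}B_l^{\textrm I}(u,\beta_j)\cup\bigcup_{t\le r}B_{\frac l2}^{\textrm{II}}(u,\alpha_t)$ given by Theorem~\ref{torcidas_lasunicasgradings}, with $B_l^{\textrm I}(u,\beta_j)=\{x_1^j,y_1^j,\dots,x_l^j,y_l^j\}$ and $B_{\frac l2}^{\textrm{II}}(u,\alpha_t)=\{a_1^t,\dots,a_l^t\}$, and write $W_j^{\textrm I}=\span{B_l^{\textrm I}(u,\beta_j)}$, $W_t^{\textrm{II}}=\span{B_{\frac l2}^{\textrm{II}}(u,\alpha_t)}$, so $[u,L]=\bigoplus_j W_j^{\textrm I}\oplus\bigoplus_t W_t^{\textrm{II}}$. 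Since every homogeneous component of $\Gamma$ is one-dimensional and $f$ fixes $\span z$ and $\span u$, the automorphism $f$ multiplies each remaining basis vector by a scalar and permutes the lines they span; as $f(u)=u$, it commutes with $\varphi:=\ad(u)$, hence this permutation commutes with the $\varphi$-action on those lines.

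The first step is to show that $f$ permutes the subspaces $W_j^{\textrm I}$ among themselves and the $W_t^{\textrm{II}}$ among themselves, respecting the $\widehat\delta_i$- and $\widehat\gamma_i$-classes. The $\varphi$-action on the lines splits into cycles $\{\span{x_1^j},\dots,\span{x_l^j}\}$, $\{\span{y_1^j},\dots,\span{y_l^j}\}$, $\{\span{a_1^t},\dots,\span{a_l^t}\}$, all of length $l$, so $f$ carries each of these orbits onto another. An orbit of $a$'s cannot be sent onto an orbit of $x$'s or $y$'s (nor conversely): inside $W_t^{\textrm{II}}$ one has $[a_p^t,a_q^t]\ne0$ exactly when $p+q\equiv1\pmod l$, and among the $l$ images of such an orbit every residue occurs, whereas all internal brackets of the $x$-part (or of the $y$-part) of a type~I block vanish; applying $f$ to a suitable pair gives a contradiction, as in the proof of Theorem~\ref{teo_clasificacionfinassalvoequivalencia}. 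Since $[x_i^j,y_{l-i}^j]\ne0$ forces $f(x_i^j)$ and $f(y_{l-i}^j)$ into a common block-span, $f$ permutes the $W_j^{\textrm I}$ and permutes the $W_t^{\textrm{II}}$. Moreover $f$ conjugates $\varphi|_{W_j^{\textrm I}}$ onto $\varphi|_{f(W_j^{\textrm I})}$, and the eigenvalues of $\varphi$ on $W_j^{\textrm I}$ form the set $\widehat\beta_j$ (on $W_t^{\textrm{II}}$ the set $\widehat\alpha_t$), so by the choice in Equation~(\ref{eq_elrepartoparalosupsilon}) the induced permutation lies in $S_{m_1}\times\cdots\times S_{m_{s'}}$ on the type~I blocks and in $S_{n_1}\times\cdots\times S_{n_{r'}}$ on the type~II blocks. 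Replacing $f$ by $\Upsilon_{(\eta,\sigma)}^{-1}f$ for the corresponding $(\eta,\sigma)$, we may assume from now on that $f$ fixes $z$, $u$ and each $W_j^{\textrm I}$, $W_t^{\textrm{II}}$.

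Next I would normalise $f$ on each type~I block $W_j^{\textrm I}$. By homogeneity $f(x_1^j)$ is a multiple of some $x_{i_0}^j$ or $y_{i_0}^j$; composing on the left with $\vartheta_j$ if necessary (it fixes $z$, $u$ and the other blocks) we may take it a multiple of $x_{i_0}^j$, and then composing with $\theta_j^{-(i_0-1)}$ we may assume $f(x_1^j)\in\span{x_1^j}$. As $f$ commutes with $\varphi$ and $x_1^j$ is a $\varphi$-cyclic vector of the invertibly-acted chain $\span{x_1^j,\dots,x_l^j}$, the commutant argument gives $f(x_i^j)=c\,x_i^j$ for all $i$ and some $c\in\mathbb F^\times$; since $f$ then permutes the $y$-lines commuting with the $\varphi$-shift and $[x_1^j,y_{l-1}^j]\ne0$ is preserved, one gets $f(y_i^j)=c^{-1}y_i^j$ for all $i$, so $f$ is scalar on every line of $W_j^{\textrm I}$. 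Performing this for all $j\le s$ (the operations for distinct $j$ touch disjoint blocks) leaves an automorphism still fixing $z$, $u$ and all block-spans and lying in $\stab(\Gamma)$ on all type~I blocks. Finally, on a type~II block $W_t^{\textrm{II}}$ the permutation of the lines $\span{a_1^t},\dots,\span{a_l^t}$ commutes with the cyclic $\varphi$-shift, hence is a shift $i\mapsto i+c$, so $f(a_i^t)=\mu\,a_{i+c}^t$ for a constant $\mu$; imposing $f([a_1^t,a_l^t])=[f(a_1^t),f(a_l^t)]$ together with $[a_p^t,a_q^t]\ne0\iff p+q\equiv1\pmod l$ forces $2c\equiv0\pmod l$, i.e. $c\in\{0,l/2\}$. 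If $c=l/2$, composing $f$ with $\varrho_t$ (trivial on the other blocks) reduces to $c=0$, whence $\mu=\pm1$ and $f$ is scalar on each line of $W_t^{\textrm{II}}$. After doing this for all $t\le r$, the resulting automorphism is scalar on every homogeneous line, hence lies in $\stab(\Gamma)$ and has trivial class in $\mathcal{W}(\Gamma)$; unwinding the compositions expresses $[f]$ as a product of the classes in (\ref{eq_losquefijanu}).

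The main obstacle is the first step: ruling out that $f$ mixes blocks of different types and controlling the permutation of lines it may induce. The essential inputs are that $f$ commutes with $\varphi=\ad(u)$ (so it respects the $\varphi$-cyclic orbit structure on the lines), that the type~I/type~II dichotomy is detected by the vanishing pattern of internal brackets, and that blocks with inequivalent scalars have disjoint $\varphi$-spectra (Lemma~\ref{le_cambioenlosbloques}). Once these are in place, the remainder is a careful but routine bookkeeping of how $\theta_j$, $\vartheta_j$ and $\varrho_t$ act on a single block.
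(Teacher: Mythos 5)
Your proof is correct and follows essentially the same route as the paper: first show that $f$ permutes the block spans preserving type and scalar class (so that composing with a suitable $\Upsilon_{(\eta,\sigma)}^{-1}$ makes $f$ block-preserving), then reduce inside each block with $\theta_j$, $\vartheta_j$, $\varrho_t$ until the remainder lies in $\stab(\Gamma)$. The only difference is that the paper quotes the single-block Weyl group computations of Equations~(\ref{eq_WeylbloquetipoI}) and~(\ref{eq_WeylbloquetipoII}) for the within-block step, which you rederive explicitly.
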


\begin{proof}
Fixed $j=1,\dots,r$, there is $p=1,\dots,s'$ such that $\delta_p=\beta_j$ ($j=m_1+\dots+m_{p-1}+b$ for some $1\le b\le m_p$). The image of this $j$th block is  $f(B_l^{\textrm{I}}(u,\delta_p))=B_l^{\textrm{I}}(f(u),\delta_p)=B_l^{\textrm{I}}(  u,\delta_p )$, so it must generate the
subspace spanned by the $q$th block for some $q$ such that $\delta_p=\beta_q$ (taking into account Lemma~\ref{le_cambioenlosbloques} and the conditions required after Equation~(\ref{eq_elrepartoparalosupsilon})). In other words,
there is $\eta\in S_s$ such that $f$ applies the $j$th block of type I into the span of the $\eta(j)=q$th block of type I,
with this  $\eta\in S_{m_{1 }}\times\dots\times S_{m_{s' }}$. In the same way, there is
$\sigma\in S_{n_{1 }}\times\dots\times S_{n_{r' }}$ such that any $i$th block of type II is applied into the span of the $\sigma(i)$th block of type II. Hence, the automorphism $\Upsilon_{(\eta,\sigma)}^{-1}f$ applies each block
into the subspace spanned by itself. Now take into account that \emph{inside each block}, we had computed the Weyl groups in   Equations~(\ref{eq_WeylbloquetipoI})
and  (\ref{eq_WeylbloquetipoII}).
\end{proof}

Moreover, note that $\span{[\theta_j],[\vartheta_j]}\cong D_l$ for each fixed $j$, and these elements commute
with all $[\theta_i],[\vartheta_i],[\varrho_t]$ if $i\ne j$ (any $t$). The other elements multiply as follows:
$$
\Upsilon_{(\eta,\sigma)}\theta_j\Upsilon_{(\eta,\sigma)}^{-1}=\theta_{\eta(j)},\quad
\Upsilon_{(\eta,\sigma)}\vartheta_j\Upsilon_{(\eta,\sigma)}^{-1}=\vartheta_{j},\quad
\Upsilon_{(\eta,\sigma)}\rho_t\Upsilon_{(\eta,\sigma)}^{-1}=\rho_{\sigma(t)},
$$
so that their classes do not commute in general.
Thus, the group $\mathcal{W}'$ generated by the set in Equation~(\ref{eq_losquefijanu}) is isomorphic to
$$
\mathcal{W}'\cong (S_{n_{1 }}\times\dots\times S_{n_{r' }}\times S_{m_{1 }}\times\dots\times S_{m_{s' }})\ltimes
 (D_l^s\times \mathbb{Z}_2^r).
 $$

 The following technical lemma will be useful, too.

 \begin{lemma}\label{le_lodeXeYydescomposicionyautomorfimo}
  Under the hypothesis above,
 \begin{itemize}
 \item[a)] If $f\in\aut(L)$ verifies that $f(z)=z$ and $f(u)=\varepsilon u$,
 then $\varepsilon$ is a primitive $p$th root of the unit and the spectrum of $ \ad(u)$ can be splitted as
 \begin{equation}\label{eq_ladescomposicionenXeY}
 \{\widehat\alpha_1,\dots,\widehat\alpha_r\}=\cup_{t=0}^{p-1}\varepsilon^tX,\quad\text{(disjoint union)}\quad
 \{\widehat\beta_1,\dots,\widehat\beta_s\}=\cup_{t=0}^{p-1}\varepsilon^tY,
 \end{equation}
 for some
  sets $X$ and $Y$ of classes such that
 $X\cap\varepsilon^jX$ is either $\emptyset$ or $X$ for any $j$, and in the same way
 $Y\cap\varepsilon^jY$ is either $\emptyset$ or $Y$.
  \item[b)] 
  If there is $p$ a divisor of $r$ and $s$ such that $\beta_{j}=\beta_{j+\frac sp}$ for all $1\le j\le\frac sp$
  and $\alpha_{i}=\alpha_{i+\frac rp}$ for all $1\le i\le\frac rp$,
  then the map $g_p$ given by
\begin{equation}\label{eq_elgsubp}
 z\mapsto z,\,
 u\mapsto\epsilon^{-1}u,\,
 x_i^j\mapsto x_i^{j+\frac sp},\,
 y_i^j\mapsto y_i^{j+\frac sp},\,
 a_i^t\mapsto a_i^{t+\frac rp},\,
 \end{equation}
 if $t\le r$, $i\le l$, $j\le s$, for $\epsilon$  a primitive $p$th root of the unit, is an order $p$ automorphism.
 \end{itemize}
 \end{lemma}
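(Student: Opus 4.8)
The plan is to treat the two parts in turn, keeping in mind that $\varphi:=\ad(u)$ is diagonalizable with $\Spec(u)=\{\pm\lambda_1,\dots,\pm\lambda_k\}$ on $[u,L]$, that the chosen block basis refines the eigenspace decomposition of $\varphi$, and that a type $\textrm{I}$ block $B_l^{\textrm{I}}(u,\beta)$ accounts for the eigenvalues in the class $\widehat\beta$ (with a $\pm$ when $l$ is odd) while a type $\textrm{II}$ block $B_{\frac l2}^{\textrm{II}}(u,\alpha)$ accounts for $\widehat\alpha$. For (a): from $f(z)=z$ and $f(u)=\varepsilon u$ we get $f\varphi f^{-1}=\ad(f(u))=\varepsilon\varphi$, so $\varphi$ and $\varepsilon\varphi$ are conjugate; hence multiplication by $\varepsilon$ is an injective self-map of the finite nonempty set $\Spec(u)$, so it has finite order, i.e. $\varepsilon$ is a primitive $p$th root of unity for some $p\ge1$. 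Since multiplication by $\varepsilon$ commutes with multiplication by the fixed primitive $l$th root $\xi$, it permutes the classes inside $\Spec(u)$; moreover $f$ carries the span of each block of $\Gamma$ onto the span of another block of the same type (type $\textrm{I}$ cannot be sent to type $\textrm{II}$, because $f$ is compatible with the universal grading group of $\Gamma$, equivalently because the two types are distinguished by the local structure at a common eigenvalue), and if $f$ sends the span of a type $\textrm{I}$ block with parameter $\beta$ onto that of a type $\textrm{I}$ block with parameter $\beta'$, then comparing $\varphi$-eigenvalues on both sides via $f\varphi f^{-1}=\varepsilon\varphi$ gives $\widehat{\beta'}=\varepsilon\widehat\beta$, and similarly for type $\textrm{II}$ parameters. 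Hence multiplication by $\varepsilon$ permutes the multisets $\{\widehat\beta_1,\dots,\widehat\beta_s\}$ and $\{\widehat\alpha_1,\dots,\widehat\alpha_r\}$ \emph{separately}; choosing $Y$ (resp. $X$) to be a transversal of the $\langle\varepsilon\rangle$-orbits in these multisets, each orbit counted with the common multiplicity of its members (that common value being exactly what forces $X\cap\varepsilon^jX\in\{\emptyset,X\}$, and likewise for $Y$), yields the asserted splittings $\{\widehat\alpha_i\}=\bigcup_{t=0}^{p-1}\varepsilon^tX$ and $\{\widehat\beta_j\}=\bigcup_{t=0}^{p-1}\varepsilon^tY$.

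I expect the one genuinely delicate point in (a) to be the separation of the type $\textrm{I}$ and type $\textrm{II}$ parts of $\Spec(u)$, which is an issue only when $l$ is even (so that several blocks of either type may share a class). I would secure it by using the alternating form $[\cdot,\cdot]\colon[u,L]\times[u,L]\to\span z$, which is preserved by $f$ (because $f(z)=z$) and for which $\varphi$ is skew-adjoint: this form pairs the ``$x$-half'' of a type $\textrm{I}$ block with its ``$y$-half'' but pairs a type $\textrm{II}$ block within itself, so that the maximal $\varphi$-invariant isotropic subspaces coming from a single block have dimension $l$ in type $\textrm{I}$ and $\tfrac l2$ in type $\textrm{II}$. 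Analysing, class by class, the restriction of this form to $\bigoplus_{\mu\in c}V_\mu$ then recovers the number of type $\textrm{I}$ and of type $\textrm{II}$ blocks with class $c$, which is what makes the separate invariance of the two multisets a legitimate conclusion rather than merely that of their union.

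For (b) the plan is a direct verification. The map $g_p$ cyclically advances the index of each bundle of mutually equal-parameter type $\textrm{I}$ blocks by $s/p$ and of each bundle of equal-parameter type $\textrm{II}$ blocks by $r/p$, fixes $z$, and acts on $u$ (and, as dictated by the bracket relations, on the block elements) by the indicated powers of the primitive $p$th root $\epsilon$. One checks that $g_p$ respects each defining bracket of the homogeneous basis in Equation~(\ref{eq_labasedeunafinatwistedcualquiera}) --- the relations to verify being $[u,x_i^j]=\beta_j x_{i+1}^j$ and $[x_i^j,y_{l-i}^j]=(-1)^{l-i}\beta_j z$, their type $\textrm{II}$ analogues $[u,a_i^t]=\alpha_t a_{i+1}^t$ and $[a_i^t,a_{l+1-i}^t]=(-1)^i\alpha_t z$, and the vanishing of all brackets between distinct blocks --- which go through because $\beta_j=\beta_{j+s/p}$, $\alpha_i=\alpha_{i+r/p}$ and $\epsilon^p=1$. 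Then $g_p$ permutes a basis up to nonzero scalars, hence is bijective; and since advancing $\{1,\dots,s\}$ by $s/p$ (resp. $\{1,\dots,r\}$ by $r/p$) has order exactly $p$ --- at least one of $r,s$ being nonzero since $l(2s+r)=n-2\ne0$ --- while $g_p^m(u)=\epsilon^{-m}u$, the order of $g_p$ is exactly $p$. The only real obstacle here is bookkeeping: making the scalar factors on $u$ and on the $x_i^j,y_i^j,a_i^t$ simultaneously consistent with all the relations above, which are pinned down uniquely once one traces through $[u,x_i^j]$ and $[x_i^j,y_{l-i}^j]$.
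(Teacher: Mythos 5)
Your skeleton (blocks map to spans of blocks of the same type, compare classes via Lemma~\ref{le_cambioenlosbloques}, then take a transversal of the $\langle\varepsilon\rangle$-orbits) is the paper's, and your derivation that $\varepsilon$ has finite order from $f\,\ad(u)\,f^{-1}=\varepsilon\,\ad(u)$ and the finiteness of $\Spec(u)$ is a nice simplification of the paper's ``multiply all the relations'' argument. But part a) has a genuine gap at exactly the point you flag as delicate. Your primary justification for ``type I cannot be sent to type II'' --- that $f$ is compatible with the universal grading group of $\Gamma$ --- silently upgrades the hypothesis from $f\in\aut(L)$ to $f\in\aut(\Gamma)$; that upgrade is indeed how the lemma is later applied (and what the paper itself tacitly uses when it identifies $f(B_l^{\textrm{I}}(u,\beta_j))$ with one of the grading's blocks before invoking Lemma~\ref{le_cambioenlosbloques}), but it is not available from the stated hypotheses, and your proposed intrinsic substitute is false: the number of type I and type II blocks lying over a fixed class is \emph{not} recoverable from $(L,u,z)$ and the form. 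The paper's own example $L=H_{10}^{(1,1,\I,\I)}$ shows this: the fine gradings $\Gamma(2,1,2;\I,1,1)$ and $\Gamma(2,1,2;1,\I,\I)$ are built from the same $z$, the same $u$ and the same eigenspace decomposition of $\ad(u)$, yet the class $\widehat{1}$ is covered by two type II blocks in the first and by one type I block in the second; concretely the $4$-dimensional space $V_1\oplus V_{-1}$ is spanned both by two blocks $B_1^{\textrm{II}}(u,1)$ and by a single block $B_2^{\textrm{I}}(u,1)$. For the same reason the separation claim itself fails for a general $f\in\aut(L)$: one can construct an automorphism of this $L$ with $f(z)=z$, $f(u)=\I u$ (any symplectic map sending each $V_\mu$ onto $V_{\mu/\I}$ extends), and for $\Gamma(2,1,2;\I,1,1)$ the set of type II classes $\{\widehat{1}\}$ is not invariant under multiplication by $\I$. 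So this step cannot be ``secured'' by analysing the form class by class; it must come from $f$ preserving the grading, in which case the $[x,\ad(u)^m(x)]$ criterion from the proof of Theorem~\ref{teo_clasificacionfinassalvoequivalencia} together with Lemma~\ref{le_cambioenlosbloques} gives what you need, and the rest of your argument for a) goes through.

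In part b) your verification does not close up as described. With the hypothesis $\beta_{j+\frac sp}=\beta_j$ and the map taken literally from Equation~(\ref{eq_elgsubp}), already the first relation fails: $g_p([u,x_i^j])=\beta_j x_{i+1}^{j+\frac sp}$ while $[g_p(u),g_p(x_i^j)]=\epsilon^{-1}\beta_{j+\frac sp}\,x_{i+1}^{j+\frac sp}$, so the $u$-brackets force $\beta_{j+\frac sp}=\epsilon\beta_j$ whereas the brackets into $z$ force $\beta_{j+\frac sp}=\beta_j$; these are compatible only for $\epsilon=1$. The map has to be corrected by scalars on the block elements (for instance $x_i^j\mapsto c_j x_i^{j+\frac sp}$, $y_i^j\mapsto d_j y_i^{j+\frac sp}$ with $c_jd_j=\epsilon^{-1}$, and $a_i^t\mapsto e_t a_i^{t+\frac rp}$ with $e_t^2=\epsilon^{-1}$, available since $\mathbb{F}$ is algebraically closed). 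You allude to scalar bookkeeping being ``pinned down uniquely'', but your stated check --- that the unscaled relations hold because $\beta_j=\beta_{j+\frac sp}$ and $\epsilon^p=1$ --- is precisely where the inconsistency sits; since the paper dismisses b) as clear, this computation is the part your write-up actually has to supply, and as written it fails.
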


 \begin{proof}
 As $f(B_l^{\textrm{I}}(u,\beta_j))=B_l^{\textrm{I}}(f(u),\beta_j)=B_l^{\textrm{I}}(  u,\beta_j/\varepsilon )$ for all $j$,
by Lemma~\ref{le_cambioenlosbloques} there is $k\le r$ such that $\widehat\beta_j/\varepsilon=\widehat\beta_k$.
Thus there are $\sigma\in S_r$ and $\eta\in
S_s$
 such that
 $$
 \varepsilon\widehat\alpha_i= \widehat\alpha_{\sigma(i)},\
 \varepsilon\widehat\beta_j= \widehat\beta_{\eta(j)}
 $$
 for all $i\le r$ and $j\le s$.
 In particular (multiplying for all the indices), we get that
 $\widehat\varepsilon^r=\widehat1=\widehat\varepsilon^s$ and hence $\varepsilon^{rl}=1$.
 Take $q,p$ the minimum integers such that  $\widehat\varepsilon^q=\widehat1$ and $\varepsilon^p=1$ (so $q$ divides $p$).
 Thus, for all $j$,  the set $\{\widehat\beta_j,\varepsilon\widehat\beta_j,\dots, \varepsilon^{p-1}\widehat\beta_j\}$
 has $q$ different classes repeated $p/q$ times. Now we take $j_1=1$, and by induction $j_{i}\notin\{j_{d},\eta(j_{d}),\dots,\eta^{p-1}(j_{d}):d\le i-1\}$ but such that $\widehat\beta_{j_i}=\widehat\beta_{j_d}$ if there is $d<i$ such that $\widehat\beta_{j_i}\in\{\varepsilon^{g}\widehat\beta_{j_d}:g\le q\}$.
 Hence there is a set  $Y=\{\widehat\beta_{j_1},\dots,\widehat\beta_{j_{s/p}} \}$ of classes such that
 $
 \{\widehat\beta_1,\dots,\widehat\beta_s\}=\cup_{t=0}^{p-1}\varepsilon^tY
 $  (disjoint union)
  which verifies that $  Y\cap\varepsilon^jY=\emptyset$ if $j$ is not a multiple of $q$ and $Y\cap\varepsilon^jY=Y$ otherwise. We can deal with $X$ in the same way.\smallskip

The second item b) is clear. Also it is clear that
given any decomposition as in Equation~(\ref{eq_ladescomposicionenXeY}), we can reorder the blocks to are in the situation of item b) precisely for  $p$ and consider $g_p$ correspondingly.
 \end{proof}

\begin{pr}
Take $p\in\mathbb{N}$ the greatest integer such that, for $\epsilon$ a primitive $p$th root of the unit, there are  $X$ and $Y$ sets of classes such that Equation~(\ref{eq_ladescomposicionenXeY}) holds.
 Let $q$ be the minimum positive integer such that $\widehat\epsilon^q=\widehat1$.

 The Weyl group of $\Gamma$, if $l$ is even, is isomorphic to
 \begin{equation}\label{eq_Weylcasolpar}
 \frac{\left((S_{n_{1 }}\times\dots\times S_{n_{r' }}\times S_{m_{1 }}\times\dots\times S_{m_{s' }})\ltimes
 (D_l^s\times \mathbb{Z}_2^r)\right)
 \rtimes\mathbb{Z}_p}{\mathbb{Z}_{p/q}}.
  \end{equation}
  \end{pr}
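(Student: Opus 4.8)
The plan is to reduce an arbitrary $f\in\aut(\Gamma)$, step by step, to an automorphism whose class is already under control, and then to read off the group structure from the way the extra generator interacts with $\mathcal{W}'$, the group generated by the set in Equation~(\ref{eq_losquefijanu}). First, the automorphism displayed in Equation~(\ref{eq_automorfparaajustarlaz}) belongs to $\stab(\Gamma)$, since it merely rescales $z$ and each one-dimensional block component; composing $f$ with a suitable member of this family, we may assume $f(z)=z$. Because $f$ preserves $H_n=[L,L]$ and $\langle u\rangle$ is the only homogeneous component of $\Gamma$ not contained in $H_n$, it follows that $f(u)=\varepsilon u$ for some $\varepsilon\in\mathbb{F}^\times$. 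By Lemma~\ref{le_lodeXeYydescomposicionyautomorfimo}(a), the order of $\varepsilon$ realizes a splitting of $\Spec(u)$ of the shape in Equation~(\ref{eq_ladescomposicionenXeY}); hence, by the maximality of $p$, we get $\varepsilon^p=1$, that is, $\varepsilon\in\langle\epsilon\rangle$.

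Next I would fix the order-$p$ automorphism $g_p$ provided by Lemma~\ref{le_lodeXeYydescomposicionyautomorfimo}(b), after reordering the blocks as permitted by the final remark of part (a); it satisfies $g_p(z)=z$ and $g_p(u)=\epsilon^{-1}u$. Since $\varepsilon\in\langle\epsilon\rangle$, there is an integer $a$ with $g_p^{a}(u)=\varepsilon^{-1}u$, so that $g_p^{a}f$ fixes both $z$ and $u$; Lemma~\ref{le_losquefijanelu} then gives $[g_p^{a}f]\in\mathcal{W}'$, whence $\mathcal{W}(\Gamma)=\mathcal{W}'\cdot\langle[g_p]\rangle$. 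A direct computation of the conjugates $g_p\,\Upsilon_{(\eta,\sigma)}\,g_p^{-1}$, $g_p\,\theta_j\,g_p^{-1}$, $g_p\,\vartheta_j\,g_p^{-1}$ and $g_p\,\varrho_t\,g_p^{-1}$ shows that each of them lies again in $\mathcal{W}'$ --- conjugation by $g_p$ only relabels the blocks through the cyclic shifts $j\mapsto j+\frac sp$ and $t\mapsto t+\frac rp$ --- so $\langle[g_p]\rangle$ normalizes $\mathcal{W}'$, $\mathcal{W}'\trianglelefteq\mathcal{W}(\Gamma)$, and $\mathcal{W}(\Gamma)$ is a homomorphic image of $\mathcal{W}'\rtimes\mathbb{Z}_p$.

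It remains to identify the kernel of $\mathcal{W}'\rtimes\mathbb{Z}_p\twoheadrightarrow\mathcal{W}(\Gamma)$. On the one hand, $[g_p]$ has order exactly $p$ in $\mathcal{W}(\Gamma)$: if $g_p^t\in\stab(\Gamma)$ then $g_p^t$ fixes every block component, which forces the shifts $t\frac sp$ and $t\frac rp$ to be trivial modulo $s$ and $r$, hence $p\mid t$. On the other hand, $[g_p]^q\in\mathcal{W}'$: by the definition of $q$ the scalar $\epsilon^{q}$ is an $l$-th root of the unit, so $g_p^q$ differs from the admissible permutation $\Upsilon_{(\eta_0,\sigma_0)}$ effecting the shifts by $\frac{qs}p$ and $\frac{qr}p$ (admissible because $\widehat{\epsilon^{q}}=\widehat1$ implies that these shifts preserve the classes $\widehat\beta_j$ and $\widehat\alpha_i$, cf.\ Lemma~\ref{le_cambioenlosbloques}) only by a rescaling of $u$ by an $l$-th root of the unit, and such a map fixes every homogeneous component and so lies in $\stab(\Gamma)$; thus $[g_p]^q=[\Upsilon_{(\eta_0,\sigma_0)}]\in\mathcal{W}'$. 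To see that $q$ is optimal, I would use the homomorphism $\bar\chi\colon\mathcal{W}(\Gamma)\to\mathbb{F}^\times/\langle\xi\rangle$ sending $[f]$ to the class of the scalar $\varepsilon_f$ defined by $f(u)=\varepsilon_f u$; it is well defined because every element of $\stab(\Gamma)$ rescales $u$ by an $l$-th root of the unit, it kills $\mathcal{W}'$, and it sends $[g_p]$ to the class of $\epsilon^{-1}$, an element of order $q$. Hence $\langle[g_p]\rangle\cap\mathcal{W}'=\langle[g_p]^q\rangle\cong\mathbb{Z}_{p/q}$, and the relation $[g_p]^q=[\Upsilon_{(\eta_0,\sigma_0)}]$ is exactly what one factors out of $\mathcal{W}'\rtimes\mathbb{Z}_p$. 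Substituting the description $\mathcal{W}'\cong(S_{n_1}\times\dots\times S_{n_{r'}}\times S_{m_1}\times\dots\times S_{m_{s'}})\ltimes(D_l^s\times\mathbb{Z}_2^r)$ obtained above then yields the isomorphism in Equation~(\ref{eq_Weylcasolpar}).

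The hard part is precisely this last structural step: verifying that $g_p^q$, once corrected by the admissible permutation $\Upsilon_{(\eta_0,\sigma_0)}$, truly stabilizes $\Gamma$; that no power of $[g_p]$ smaller than the $q$-th lands in $\mathcal{W}'$; and that the identification it produces corresponds to a \emph{normal} subgroup $\mathbb{Z}_{p/q}\leq\mathcal{W}'\rtimes\mathbb{Z}_p$, so that the quotient in Equation~(\ref{eq_Weylcasolpar}) is well defined, together with the bookkeeping needed to match the conjugation actions on both sides. Everything else --- the reductions to $f(z)=z$ and to $f(u)=u$, and the conjugation formulas for the generators --- is routine once Lemmas~\ref{le_losquefijanelu} and \ref{le_lodeXeYydescomposicionyautomorfimo} are available.
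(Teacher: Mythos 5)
Your strategy is essentially the paper's: normalize $f(z)=z$ with the stabilizer element of Equation~(\ref{eq_automorfparaajustarlaz}), write $f(u)=\varepsilon u$, use $g_p$ to reduce to an automorphism fixing $u$ and $z$, invoke Lemma~\ref{le_losquefijanelu} to land in $\mathcal{W}'$, and then pin down the order of $[g_p]$ and the least power $q$ with $[g_p]^q\in\mathcal{W}'$. Two of your justifications, however, have real gaps. First, the maximality of $p$ only gives that the order $p'$ of $\varepsilon$ satisfies $p'\le p$; it does not give $\varepsilon^p=1$, which is exactly what you need in order to find $a$ with $g_p^a(u)=\varepsilon^{-1}u$. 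The paper closes this by applying Lemma~\ref{le_lodeXeYydescomposicionyautomorfimo}(a) a second time, now to the composite $fg_p^{-1}$, whose scalar on $u$ is $\varepsilon\epsilon$; the splitting attached to that scalar together with maximality forces $\mathrm{lcm}(p,p')=p$, hence $\varepsilon\in\langle\epsilon\rangle$. Without this (or an equivalent) extra step your reduction to Lemma~\ref{le_losquefijanelu} does not go through.

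Second, your argument that $[g_p]^q\in\mathcal{W}'$ asserts that $g_p^q$ differs from the admissible permutation $\Upsilon_{(\eta_0,\sigma_0)}$ only by ``a rescaling of $u$ by an $l$th root of the unit'', and that such a map ``lies in $\stab(\Gamma)$''. A map multiplying $u$ by $\omega\ne1$ and fixing all block vectors is not a Lie algebra automorphism at all: the relations $[u,x_i^j]=\beta_j x_{i+1}^j$ force any stabilizer element with $u\mapsto\omega u$ to rescale the block vectors by powers of $\omega$ as well (with $\omega^l=1$). This is precisely why the paper, instead of quotienting by a fictitious pure $u$-rescaling, composes $g_p^q$ with the explicit compensated stabilizer element $z\mapsto\xi z$, $u\mapsto\xi u$, $x_i^j\mapsto\xi^i x_i^j$, $y_i^j\mapsto\xi^{i+1}y_i^j$, $a_i^t\mapsto\xi^i a_i^t$ (followed by one of the form (\ref{eq_automorfparaajustarlaz})) so as to obtain an automorphism fixing $u$ and $z$, and only then applies Lemma~\ref{le_losquefijanelu}. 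Your character argument $\bar\chi\colon \mathcal{W}(\Gamma)\to\mathbb{F}^\times/\langle\xi\rangle$ for the minimality of $q$ is correct and indeed cleaner than the paper's parenthetical remark, but the two points above, together with the block-reordering and conjugation bookkeeping that you explicitly defer as ``the hard part'', are where the substance of this proposition lies; as written, the proof is incomplete at exactly those places.
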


 \begin{proof}
 Take  $f \in \aut(\Gamma)$, so that $f(z)\in\span{z}$ and $f(u)\in\span{u}$. Since we  can compose $f$ with an automorphism in the stabilizer  as the given one in
 Equation~(\ref{eq_automorfparaajustarlaz}), then we can assume $f(z)=z$.

 Consider $g_p$   the automorphism described in  Lemma~\ref{le_lodeXeYydescomposicionyautomorfimo}b).
 Note that if we apply Lemma~\ref{le_lodeXeYydescomposicionyautomorfimo}a) to $f$, there is $\varepsilon$ a primitive $p'$th root of the unit  such that $f(u)=\varepsilon u$. So $fg_p^{-1}(u)=\varepsilon\epsilon u$, being $\varepsilon\epsilon$ a root $lcm(p,p')$-primitive. Again by Lemma~\ref{le_lodeXeYydescomposicionyautomorfimo}a), we have the corresponding splitting, so that, by maximality of $p$, we conclude that $lcm(p,p')=p$ and $\varepsilon$ is a power of $\epsilon$. Thus there is $c\in\mathbb{N}$ such that   $f  g_p^c(u)=u$  and thus Lemma~\ref{le_losquefijanelu} is applied and $[f]\in \span{[g_p],\mathcal{W}'}$. Moreover,   $\mathcal{W}(\Gamma)=\mathcal{W}'\rtimes\span{[g_p]}$, since $g_pfg_p^{-1}$ fixes $u$ for any  $f\in \mathcal{W}'$.\smallskip


 In order to make precise the composition of $g_p$ with $  \Upsilon_{(\eta,\sigma)}$, we need to choose the order of the scalars $\beta_j$'s and $\alpha_i$'s adapted to both automorphisms, changing slightly the   order taken  in Equation~(\ref{eq_elrepartoparalosupsilon}). We mean: order the set $Y= \{ \widehat\delta_1,\dots,\widehat\delta_1,\dots,\widehat\delta_{a},\dots,\widehat\delta_{a}\}$ where $\widehat\delta_i$ is repeated $l_i$ times and $\widehat\delta_i\notin\span{\epsilon}\widehat\delta_j$ if $i\ne j$. So the set of different representatives for the blocks of type I are
 $\{\epsilon^t\delta_b: 1\le b\le a,0\le t\le q-1\}$, where $ \epsilon^t\delta_b$ appears $l_b\frac pq$ times in the \emph{positions}
\begin{equation}\label{eq_elconjuntoquehayquedejarfijo}
 \{l_1+\dots+l_{b-1}+i+\frac spt+\frac{sq}pc: i=1,\dots,l_b;\,c=0,\dots,\frac pq-1\}.
 \end{equation}
 (With our notation  $\{m_1,\dots,m_{s'}\}=\cup_{t=0}^{q-1}\{l_b\frac pq: 1\le b\le a\}$.) We denote by  $ S'_s$
 the set of permutations which fix the sets in Equation~(\ref{eq_elconjuntoquehayquedejarfijo}) for all $1\le b\le a,0\le t\le q-1$, which is of course isomorphic to $S_{m_{1 }}\times\dots\times S_{m_{s' }}$. Proceed in the same way with the set $X$ and denote by $\widetilde S_r$ the set of suitable permutations (interchanging the positions corresponding to the same scalars).
 Therefore,
  $$
   \mathcal{W}(\Gamma)=\span{\{[g_p] , [\Upsilon_{(\eta,\sigma)}],[\theta_j],[\vartheta_j],[\varrho_t]: j\le s, t\le r,\eta\in  S'_s,\sigma\in  \widetilde S_r\}},
 $$
where we can observe that if $\eta'(j):=\eta(j+\frac sp)-\frac sp$ and  $\tilde\sigma (t):=\sigma(t+\frac rp)-\frac rp$, then $\eta'\in S'_s$, $\tilde\sigma\in\widetilde S_r$ and
 $$
 g_p^{-1}\Upsilon_{(\eta,\sigma)}g_p=\Upsilon_{(\eta',\tilde\sigma)}.
 $$
 The remaining elements multiply as follows:
 $$
 \begin{array}{l}
 g_p^{-1}\theta_jg_p=\theta_{j-\frac sp},\quad
 g_p^{-1}\vartheta_jg_p=\vartheta_{j-\frac sp},\quad
 g_p^{-1}\rho_tg_p=\rho_{t-\frac rp}.
 \end{array}
 $$
This explains how the
actions in the semidirect products are defined.\smallskip

 Finally,  the new generator $[g_p] $ has order $p$. Besides  $g_p^q(u)=\epsilon^qu\in\span{\xi}u$, so  its composition with some power of the following element in the stabilizer
$$
z\mapsto\xi z,\,u\mapsto\xi u,\,x_i^j\mapsto\xi^{i}
x_i^j,\,y_i^j\mapsto \xi^{i+1} y_i^j,\,a_i^t\mapsto\xi^{i}
a_i^t,
$$
 (followed by a convenient element as the one in  Equation~(\ref{eq_automorfparaajustarlaz})) fixes $u$ (and $z$).
 Thus $[g_p]^q\in \mathcal{W}'$. Moreover,  $q$ is the least number with this property (a power of less order does not stabilize the blocks).
\end{proof}

We aboard now the case $l$   odd. Recall that there are no blocks of type II.
 As above, if $f\in \aut(\Gamma)$
verifies $f(u)= \varepsilon u $, then
 $f(B_l^{\textrm{I}}(u,\beta_j ))=B_l^{\textrm{I}}(  u,\beta_j/\varepsilon )$.
Thus there is $\eta\in S_s$
 such that
 $$
 \varepsilon\widetilde\beta_j=\widetilde\beta_{\eta(j)}
 $$
for all $j=1,\dots,s$, and $\varepsilon$ turns out to be a $p$th root of the
unit for some $p$. So we can divide
$\{\widetilde\beta_1,\dots,\widetilde\beta_s\}=\cup_{t=0}^{p-1}\varepsilon^tY$, with two different pieces disjoint and $p$ maximum such that this decomposition exists. Take $g_p$ as in the  even case.

The maps $\vartheta_j$ are not longer automorphisms, but we can
consider $\vartheta'\in\aut(\Gamma)$ given by
 $$
 \vartheta'(z)=z,\
 \vartheta'(u)=-u,\
 \vartheta'(x_i^j)=(-1)^iy_{i}^j,\
\vartheta'(y_i^j)=(-1)^{i+1}x_{i}^j.
$$
 If $f\in\aut(\Gamma)$   fixes the subspaces spanned by the blocks, it is not difficult to check
  that $f$ belongs to the subgroup generated by
$$
 \{[\theta_j]: j\le s\}\cup \{  [\vartheta']\},
 $$
which is isomorphic to $\mathbb{Z}_l^s\rtimes\mathbb{Z}_2$, and
hence

 \begin{pr}
The Weyl group of $\Gamma=\Gamma(l,s,0;\beta_1,\dots,\beta_s)$, if $l$ is odd, is isomorphic to
\begin{equation}\label{eq_Weylcasolimpar}
 \mathcal{W}(\Gamma)\cong\frac{\left( S_{m_{1 }}\times\dots\times S_{m_{s' }}\times \mathbb{Z}_2\ltimes
  \mathbb{Z}_l^s\right)
 \rtimes\mathbb{Z}_p}{\mathbb{Z}_{p/q}},
  \end{equation}
  with $ m_1+\dots+m_{s'}=s=\frac{n-2}{2l}$, for $m_j$'s  defined as in Equation~(\ref{eq_elrepartoparalosupsilon}),
  $p\in\mathbb{N}$ the greatest integer such that, for $\varepsilon$ a primitive $p$th root of the unit, there is   $Y$ a set  of classes such that Equation~(\ref{eq_ladescomposicionenXeY}) holds, and $q$ minimum such that $\widetilde\varepsilon^q=\widetilde1$.
 \end{pr}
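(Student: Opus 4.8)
The plan is to repeat, with minor modifications, the argument used in the case $l$ even: there are now no blocks of type $\textrm{II}$ (since $l$ is odd), and the per--block involutions $\vartheta_j$, which fail to be automorphisms when $l$ is odd, are replaced by the single global involution $\vartheta'$ with $\vartheta'(z)=z$, $\vartheta'(u)=-u$, $\vartheta'(x_i^j)=(-1)^iy_i^j$, $\vartheta'(y_i^j)=(-1)^{i+1}x_i^j$. Throughout I would fix the homogeneous basis $\{z,u\}\cup\bigcup_{j\le s}B_l^{\textrm{I}}(u,\beta_j)$, with $B_l^{\textrm{I}}(u,\beta_j)=\{x_1^j,y_1^j,\dots,x_l^j,y_l^j\}$ chosen as in Equation~(\ref{bloqueprim}) and the $\beta_j$'s ordered as in Equation~(\ref{eq_elrepartoparalosupsilon}).

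First I would reduce an arbitrary $f\in\aut(\Gamma)$. Since $f$ fixes $\mathcal{Z}(L)=\span{z}$ and preserves $[L,L]=H_n$, and all homogeneous components of the fine grading $\Gamma$ are one--dimensional, $f$ sends $\span{u}$ to $\span{u}$; composing with the stabilizer automorphism of Equation~(\ref{eq_automorfparaajustarlaz}) I may assume $f(z)=z$, so $f(u)=\varepsilon u$ for some $\varepsilon\in\mathbb{F}^\times$. Then $f\big(B_l^{\textrm{I}}(u,\beta_j)\big)=B_l^{\textrm{I}}(u,\beta_j/\varepsilon)$, and since $f$ permutes the blocks, Lemma~\ref{le_cambioenlosbloques}(i) (for $l$ odd) gives $\eta\in S_s$ with $\varepsilon\widetilde\beta_j=\widetilde\beta_{\eta(j)}$ for all $j$; multiplying over $j$ yields $\widetilde{\varepsilon^{\,s}}=\widetilde1$, hence $\varepsilon$ has finite order. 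Let $p$ be maximal so that $\{\widetilde\beta_1,\dots,\widetilde\beta_s\}$ is a disjoint union $\cup_{t=0}^{p-1}\epsilon^{\,t}Y$ for $\epsilon$ a primitive $p$th root of unity and $Y$ a set of classes, as in Lemma~\ref{le_lodeXeYydescomposicionyautomorfimo}(a) with $r=0$; after reordering the blocks so that $\beta_j=\beta_{j+s/p}$, let $g_p$ be the order--$p$ automorphism of Lemma~\ref{le_lodeXeYydescomposicionyautomorfimo}(b), with $g_p(u)=\epsilon^{-1}u$. By maximality of $p$, the $\varepsilon$ above is a power of $\epsilon$, so composing $f$ with a suitable power of $g_p$ I may further assume $f(u)=u$.

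Next I would identify the block--preserving part. After composing with a block--permuting automorphism $\Upsilon_{(\eta,\id)}$, $\eta\in S_{m_1}\times\dots\times S_{m_{s'}}$ (well defined by Lemma~\ref{le_cambioenlosbloques}(i) and the inequalities imposed after Equation~(\ref{eq_elrepartoparalosupsilon})), I may assume $f$ fixes the span of each block. Knowing $f(u)=u$, restricted to each block the class of $f$ then lies in the subgroup generated by $\{[\theta_j]:j\le s\}$ and $[\vartheta']$; a direct check shows $\vartheta'\in\aut(\Gamma)$, that the $[\theta_j]$ commute and each has order $l$, and that $[\vartheta']$ has order two and inverts all of them simultaneously, so this subgroup is $\mathbb{Z}_l^s\rtimes\mathbb{Z}_2$. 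Combining with the block permutations (which commute with $[\vartheta']$ and conjugate the $[\theta_j]$'s among equal classes) gives $\mathcal{W}'\cong(S_{m_1}\times\dots\times S_{m_{s'}}\times\mathbb{Z}_2)\ltimes\mathbb{Z}_l^s$, and the previous two steps show $\mathcal{W}(\Gamma)=\mathcal{W}'\,\span{[g_p]}$.

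Finally I would assemble the semidirect product. The relations $g_p^{-1}\Upsilon_{(\eta,\id)}g_p=\Upsilon_{(\eta',\id)}$ with $\eta'(j)=\eta(j+s/p)-s/p$, $g_p^{-1}\theta_jg_p=\theta_{j-s/p}$ and $g_p^{-1}\vartheta' g_p=\vartheta'$ show $\mathcal{W}'$ is normal and $\mathcal{W}(\Gamma)=\mathcal{W}'\rtimes\span{[g_p]}$ at the level of generators; $[g_p]$ has order $p$ in $\aut(\Gamma)/\stab(\Gamma)$, and if $q$ is the least positive integer with $\widetilde{\epsilon^{\,q}}=\widetilde1$, then $g_p^q$ scales $u$ by a $2l$th root of unity, a scaling realizable by a product of $[\vartheta']$ with a stabilizer automorphism, so correcting $g_p^q$ by such a product yields an automorphism fixing $u$ and $z$; hence $[g_p]^q\in\mathcal{W}'$ while no smaller power is, whence $\span{[g_p]}\cap\mathcal{W}'\cong\mathbb{Z}_{p/q}$. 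This gives
$$\mathcal{W}(\Gamma)\cong\frac{\big(S_{m_1}\times\dots\times S_{m_{s'}}\times\mathbb{Z}_2\ltimes\mathbb{Z}_l^s\big)\rtimes\mathbb{Z}_p}{\mathbb{Z}_{p/q}}.$$
I expect the main obstacle, just as in the case $l$ even, to be the first step: bounding the order of the $u$--eigenvalue $\varepsilon$ by the maximal $p$ for which the spectrum splits (so that $f$ can indeed be brought to fix $u$ by a power of $g_p$), and then pinning down which power $q$ of $[g_p]$ first re--enters $\mathcal{W}'$ in order to identify the overlap $\mathbb{Z}_{p/q}$ correctly; checking that $\vartheta'$ is an automorphism and that the conjugation relations hold is routine.
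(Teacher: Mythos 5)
Your proposal is correct and follows essentially the same route as the paper: reduce $f$ to fix $z$ and (via the maximality of $p$ and powers of $g_p$) the line $\span{u}$, permute blocks with the $\Upsilon$'s, identify the block-span-fixing part as $\mathbb{Z}_l^s\rtimes\mathbb{Z}_2$ generated by the $[\theta_j]$'s and the global involution $[\vartheta']$ (replacing the $\vartheta_j$'s, which fail for $l$ odd), and then assemble $\mathcal{W}'\rtimes\span{[g_p]}$ with $[g_p]^q$ absorbed into $\mathcal{W}'$ to produce the quotient by $\mathbb{Z}_{p/q}$. The paper's own treatment of the odd case is just a terse reference back to the even case plus the definition of $\vartheta'$, so your write-up is in fact a more explicit rendering of the same argument.
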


 \begin{example}\rm
 We now describe the Weyl groups of the fine gradings on $L=H_{10}^{(1,1,\I,\I)}$ computed in Example~1.
  Observe first that the results for $\mathcal{W}(\Gamma_1)$ and $\mathcal{W}(\Gamma_2)$ are quite different than those ones  in Proposition~\ref{pr_loscasosqueestansiempre}, since
  $$
  \mathcal{W}(\Gamma_2)\cong\frac{\mathbb{Z}_2^3\rtimes\mathbb{Z}_4}{\mathbb{Z}_2=\span{(\bar1,\bar1,\bar1,\bar2)}},
  \qquad
  \mathcal{W}(\Gamma_1)\cong\frac{\mathbb{Z}_2^6\rtimes\mathbb{Z}_4}{\mathbb{Z}_2}.
  $$
 Indeed, $\mathcal{W}(\Gamma_2)$ has 4 generators:  $[g]\equiv [g_4]$ (the only element with order $4$), $[\vartheta']$ and the classes of the two automorphisms $f_1$ and $f_2$  coming from permutations, such that the three latter ones commute, $[g]$ commute with $[\vartheta']$,
 $[gf_1g^{-1}]=[f_2]$ and $[g]^2=[\vartheta'f_1f_2]$.
And for  $\Gamma_1$ ($(l,r,s)=(2,4,0)$),    the generators of the Weyl group are $\{[\varrho_i]:i=1,\dots,4\}$,
 $[g]\equiv [g_4]$, the automorphism interchanging $e_1$ with $e_2$ and $\hat e_1$ with $\hat e_2$ and the one
 interchanging $e_3$ with $e_4$ and $\hat e_3$ with $\hat e_4$.

 The remaining cases of Example~1 correspond, respectively, to  Weyl groups isomorphic to
 $\mathbb{Z}_2^3\ltimes \mathbb{Z}_2$, $\mathbb{Z}_2^4$, $\mathbb{Z}_2^2$ and $D_4$.
 \end{example}

\medskip

{\bf Acknowledgment.} The authors would like to thank the referee
for his exhaustive  review of the paper as well as for
many suggestions which have helped to improve the work.

\end{document}